\documentclass{amsart}

\setcounter{secnumdepth}{5} 

\usepackage[latin1]{inputenc}
\usepackage{tipa}
\usepackage{dsfont}

 
 

\usepackage{leftidx}

\usepackage{xcolor}

\usepackage{xr}
 
\usepackage{amsxtra}
\usepackage{amsmath}
\usepackage{amssymb}
\usepackage{amsfonts}
\usepackage[all,2cell,cmtip]{xy}
\UseAllTwocells
\usepackage{mathrsfs}
\usepackage{amsthm}
\usepackage{enumitem}
\usepackage[hidelinks]{hyperref}

\usepackage[capitalise]{cleveref}
\crefformat{enumi}{#2\textup{(#1)}#3}
\crefformat{equation}{(#2#1#3)}

\usepackage{subfigure}
\usepackage{todonotes}

\usepackage{euscript}

\setlist[enumerate]{leftmargin=*,labelindent=.5pc}
\usepackage{fullpage}

\newcommand{\nc}{\newcommand}

\nc{\renc}{\renewcommand}
\nc{\ssec}{\subsection}
\nc{\sssec}{\subsubsection}

\newtheorem{thm}[subsubsection]{Theorem}
\newtheorem{cor}[subsubsection]{Corollary}
\newtheorem{lem}[subsubsection]{Lemma}
\newtheorem{prop}[subsubsection]{Proposition}
\newtheorem{rem}[subsubsection]{Remark}

\numberwithin{equation}{section}

%
%
%

\newtheoremstyle{example}{\topsep}{\topsep}%
     {}
     {}
     {\bfseries}
     {.}
     {2pt}
     {\thmname{#1}\thmnumber{ #2}\thmnote{ #3}}

\theoremstyle{example}

\def\on{\operatorname}



\def\arr{\rightarrow}


\def\fib#1#2{{\rm fib}(#1 \arr #2)}

\newcommand{\sbt}{\,\begin{picture}(-1,1)(-1,-1)\circle*{2}\end{picture}\ }


\def\PP{ {\mathbb P}}

\def\QCoh{\on{QCoh}}
\def\Sh{\on{Sh}}

\def\C{{C}}

\def\D{ {D}}

\def\O{{\mathcal O}}


\setlist[enumerate,1]{label=(\arabic{*})}
\setlist[enumerate,2]{label=(\alph{*})}
\setlist[enumerate,3]{label=(\roman{*})}


\def\centerarc[#1](#2)(#3:#4:#5)  { \draw[#1] ($(#2)+({#5*cos(#3)},{#5*sin(#3)})$) arc (#3:#4:#5); }
\tikzstyle{dot}=[draw,circle,fill=black,inner sep=0,minimum size=4pt]
\tikzset{
    partial ellipse/.style args={#1:#2:#3}{
        insert path={+ (#1:#3) arc (#1:#2:#3)}
    }
}


\def\Mod#1{#1\on{-Mod}}
\def\Modd#1#2{#2\on{-Mod}}

\def\CircMod#1{#1\on{-Mod}^{S^{1}}}


\def\Loc#1{\on{Loc}(#1)}

\def\hoch#1{{\mathbf{HH}_{*}}(#1)}
\def\hochco#1{{\mathbf{HH}^{*}}(#1)}

\def\hochco#1{{\mathbf{HH}^{*}}(#1)}

\def\circinv#1{#1^{hS^{1}}}

\def\negcyc#1{{\mathbf{HC}^{-}_{*}}(#1)}

\def\Homm#1#2#3{\underline{\on{Hom}}_{#1}(#2,#3)}	

\def\Endd#1#2{\underline{\on{End}}_{#1}(#2)}

\def\End#1{{\on{End}}(#1)}

\def\IndCoh#1{\on{IndCoh}(#1)}
\def\QCoh#1{\on{QCoh}(#1)}


\def\Acal{\mathcal{A}}
\def\Mcal{\mathcal{M}}

\def\Abold{\mathbf{A}}

\def\Rbold{\mathbf{R}}

\def\Cbold{\mathbf{C}}
\def\Dbold{\mathbf{D}}

\def\LinCat{\on{LinCat}_{\Abold}}
\def\LinCatt#1{\on{LinCat}_{#1}}

    
\def\DGCat{\on{DGCat_{k}}}
\def\Stab{\on{Stab}}

\def\Arr#1{\on{Arr}(#1)}

\def\Vect{\on{Vect}_{k}}
\def\Id#1{\on{Id}_{#1}}

\def\ev#1{\on{ev}_{#1}}

\def\evL#1{\on{ev}^{l}_{#1}}
\def\evR#1{\on{ev}^{r}_{#1}}

\def\Inv#1{\on{Id}^{!}_{#1}}
\def\Serre#1{\on{Id}^{\vee}_{#1}}
    


\def\Hom#1#2#3{{\on{Hom}_{#1}}(#2,#3)}

  

\def\Mapp#1#2#3{{\on{Map}}_{#1}(#2,#3)}



\def\LMod#1{#1\on{-Mod}}

\def\Alg#1{\on{Alg}(#1)}

\def\AlgTwo#1{\on{Alg^{(2)}}(#1)}


\def\Spc{\on{Spc}}
\def\Spct{\on{Spct}}


\nc{\CV}{{\mathcal{V}}}
\nc{\CB}{{\mathcal{B}}}
\nc{\CM}{\mathcal{CM}}
\nc{\cD}{\mathcal{D}}
\nc{\CS}{\mathcal{S}}

\nc{\Mfld}{\CM\mathsf{fld}}
\nc{\Disk}{\cD{\mathsf{isk}}}
\nc{\Bsc}{\CB{\mathsf{sc}}}
\nc{\Snglr}{\CS{\mathsf{nglr}}}

\begin{document}    
    
\title{The cyclic Deligne conjecture and Calabi-Yau structures}
\author{Christopher Brav}
\address{C.B.: Centre of Pure Mathematics, Moscow Institute of Physics and Technology}
\author{Nick Rozenblyum}
\address{N.R.: Department of Mathematics, University of Chicago}

\maketitle
\begin{abstract} The Deligne conjecture (many times a theorem) endows Hochschild cochains of a linear category with the structure of an $E_{2}$-algebra, that is, of an algebra over the little $2$-disks operad. In this paper, we prove the cyclic Deligne conjecture, stating that for a linear category equipped with a Calabi-Yau structure (a kind of non-commutative orientation), the Hochschild cochains is endowed with the finer structure of a framed $E_{2}$-algebra, that is, of a circle-equivariant algebra over the little $2$-disks operad. Our approach applies simultaneously to both smooth and proper linear categories, as well as to linear functors equipped with a relative Calabi-Yau structure, and works for a very general notion of linear category, including any dualizable presentable $\infty$-category. As a particular application, given a compact oriented manifold with boundary $\partial M \subset M$, our construction gives chain-level genus zero string topology operations on the relative loop homology $H_{*}(LM,L\partial M)$. 
\end{abstract}


\tableofcontents

\section{Introduction}

The Hochschild cohomology $HH^{*}(R)$ of an associative algebra $R$, identified with the self-Ext groups $Ext^{*}_{R\otimes R^{op}}(R,R)$ of the diagonal bimodule, was shown by Gerstenhaber \cite{gerst} to admit further algebraic structure, including a graded commutative product
$$HH^{*}(R) \otimes HH^{*}(R) \stackrel{\sbt}\arr HH^{*}(R)$$ and a shifted Lie bracket
$$HH^{*}(R)[1] \otimes HH^{*}(R)[1] \stackrel{[\ ,\ ]}\arr HH^{*}(R)[1],$$
making $HH^{*}(R)$ a kind of shifted Poisson algebra, now known as a Gerstenhaber algebra. 

In a different direction, May \cite{may} established his ``recognition theorem"  identifying $n$-fold loop spaces with grouplike $E_{n}$-algebras in spaces, that is, spaces with an action of the little $n$-disks operad for inducing a group structure on connected components. Cohen \cite{cohen} then computed the homology of the little $n$-disks operads, which for little $2$-disks gives exactly the operad of Gerstenhaber algebras. In other words, the Hochschild cohomology $HH^{*}(R)$ of any associative algebra is naturally an algebra for the homology of the little $2$-discs operad. This observation led Deligne to conjecture in a 1993 letter that  Hochschild cochains should carry a natural structure of $E_{2}$-algebra.

This ``Deligne conjecture'' was then proved by various authors by a combinatorial approach. See for example Gerstenhaber-Voronov \cite{gerstvor}, Tamarkin \cite{tamarkinformality}, and McClure-Smith \cite{MS-Deligne}. In the combinatorial approach, one typically constructs various multilinear operations on a specific model for Hochschild cochains, checks that they satisfy relations coming from some chain operad, and then proves that this chain operad is equivalent to chains on the $E_{2}$-operad. While this approach is in some sense concrete, in that explicit formulas for the structure are provided relative to some specific model, it does not explain why Hochschild cochains should be an $E_{2}$-algebra or what purpose this structure serves. Moreover, it becomes increasingly difficult and sometimes impossible to extend the combinatorial approach from the case of strict associative algebras to more general situations arising in homotopy theory (for example, ring spectra) and algebraic geometry (for example, quasi-coherent sheaves on stacks). 

A more conceptual approach to the proof of the Deligne conjecture proceeds via an Eckmann-Hilton like argument, which provides Hochschild cochains with the structure of $2$-algebra (two homotopically coherent and compatible multiplications), then applies Dunn's additivity theorem \cite{dunn}  to identify $2$-algebras with $E_{2}$-algebras. The relevant arguments were worked out and refined in papers of Batanin \cite{batanin}, Tamarkin \cite{tamarkindg}, and Lurie \cite{luriefmp}, among others. 

The fundamental idea is to use the equivalence between $R$-bimodules and continuous endofunctors of the category of $R$-modules, with respect to which the diagonal bimodule $R$ gives the identity endofunctor. From this point of view, Hochschild cohomology is then cohomology of the derived endomorphism complex of the identity functor. More generally, for a linear $\infty$-category\footnote{such as the $\infty$-derived category of a ring $R$} $\Cbold$, Hochschild cochains $\hochco{\Cbold}$ is defined as the (derived) endomorphisms of the identity functor inside the linear $\infty$-category of all continuous endofunctors:
\[
\hochco{\Cbold} := \End{\Id{\Cbold}}.
\]
Since $\infty$-endofunctors form a monoidal $\infty$-category with composition as the monoidal product and with the identity functor as monoidal unit,  it is evident that Hochschild cochains $\hochco{\Cbold}$ should have two compatible multiplications, namely composite of endomorphisms and monoidal product of endomorphisms. More generally, one shows that the (derived) endomorphisms of the monoidal unit in a monoidal $\infty$-category always form a $2$-algebra, hence by Dunn additivity, an $E_{2}$-algebra.

As we explain in \Cref{nccalc}, this conceptual approach to the Deligne conjecture for $\hochco{\Cbold}$ has a number of advantages. In particular, it is manifestly Morita invariant, it generalizes to relative Hochschild cohomology $\hochco{\Cbold \arr \Dbold}$ for a functor, it characterizes Hochschild cochains $\hochco{\Cbold}$ as the universal $2$-algebra acting on $\Cbold$, and it leads to an invariant construction of the so-called non-commutative calculus for a dualizable category $\Cbold$, consisting of Hochschild cochains $\hochco{\Cbold}$ acting on Hochschild chains $\hoch{\Cbold}$ in a compatible manner with respect to the circle symmetry on the latter.

\medskip

In this paper, we establish the {\it cyclic Deligne conjecture}, a refinement of the Deligne conjecture for  $\infty$-categories equipped with a ``smooth Calabi-Yau structure''. Roughly, a smooth Calabi-Yau structure of dimension $d$ on a dualizable category $\Cbold$ is a circle-invariant class $\theta \in \hoch{\Cbold}^{hS^{1}}[-d]$ such that the action of Hochschild cochains (via the non-commutative calculus)
 gives  
a non-commutative Poincar\'e duality isomorphism
\[
\hochco{\Cbold} \stackrel{- \cap \theta}\simeq \hoch{\Cbold}[-d].
\]
By the Deligne conjecture, the lefthand side has an $E_{2}$-algebra structure, while the right hand side carries a natural circle action. The cyclic Deligne conjecture for smooth Calabi-Yau structures then states these two structures should be compatible in the sense that together they induce the structure of a circle-equivariant/framed $E_{2}$-algebra structure on $\hochco{\Cbold}$.  In particular, this induces a chain-level Lie algebra structure (and in fact, gravity algebra structure) on the \emph{cyclic chains} $\mathbf{HC}_{*}(\Cbold)$, refining and generalizing the Chas-Sullivan string Lie bracket in the case of string topology (see below).

In the body of the paper, we also consider variations of the cyclic Deligne conjecture for the dual notion of ``proper Calabi-Yau structure'', as well for smooth or proper relative Calabi-Yau structures on linear functors in the sense of Brav-Dyckerhoff \cite{bravdyck}.  A simplified version of our main theorem is the following.

\begin{thm}[Cyclic Deligne conjecture]
Let $\Cbold$ be a dualizable linear category (for example, a compactly generated DG or stable $\infty$-category) equipped with a Calabi-Yau structure (in either the smooth or proper sense). Then the Hochschild cochains $\hochco{\Cbold}$ carries an induced framed $E_{2}$-algebra structure refining the $E_{2}$-algebra structure provided by the solution of the Deligne conjecture. 

More generally, given a dualizable linear functor $\Cbold \stackrel{f}\arr \Dbold$ equipped with a relative Calabi-Yau structure (in either the smooth or proper sense), the relative Hochschild cohomology $\hochco{\Cbold \stackrel{f}\arr \Dbold}$ carries an induced framed $E_{2}$-algebra structure. 
\end{thm}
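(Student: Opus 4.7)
My plan is to extend the conceptual Eckmann--Hilton/Dunn-additivity proof of the ordinary Deligne conjecture outlined in the introduction by upgrading the monoidal $\infty$-category of continuous endofunctors of $\Cbold$ to carry a cyclic, Calabi--Yau-type structure, and then establishing a ``cyclic Dunn additivity'' result that produces framed $E_2$-algebras from endomorphisms of the unit in such a structure. The framed little $2$-disks operad can be modelled as the semidirect product $E_2 \rtimes S^1$, so the goal is to canonically promote the known $E_2$-structure on $\hochco{\Cbold}$ by a compatible circle action determined by the Calabi--Yau data.

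Concretely, I would first transport the intrinsic $S^1$-action on $\hoch{\Cbold}$ --- which exists because $\hoch{\Cbold}$ is the trace of the dualizable object $\Cbold$ in the $(\infty,2)$-category of linear categories --- to an $S^1$-action on $\hochco{\Cbold}$ along the Calabi--Yau equivalence $\hochco{\Cbold} \simeq \hoch{\Cbold}[-d]$; the circle-invariance of the CY class $\theta$ is exactly what makes this transport canonical and independent of choices. I would then axiomatize the required coherence as a ``Calabi--Yau monoidal $\infty$-category'': a monoidal $\infty$-category $\Abold$ equipped with an $S^1$-invariant trace pairing on the endomorphisms of its unit that exhibits the monoidal duality. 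Given such data, I would prove the cyclic additivity statement that $\End{\unit{\Abold}}$ naturally forms a framed $E_2$-algebra, refining the standard Dunn argument that produces only an $E_2$-structure. Applying this to $\Abold = \Funex{\Cbold}{\Cbold}$, whose Calabi--Yau monoidal structure is supplied by the non-commutative calculus action together with the CY class $\theta$, gives the claimed framed $E_2$-structure on $\hochco{\Cbold}$. The relative case is treated identically after replacing $\Funex{\Cbold}{\Cbold}$ with the relative endofunctor category whose unit computes $\hochco{\Cbold \stackrel{f}\arr \Dbold}$, and replacing the absolute trace with the trace of the $1$-morphism $f$.

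The principal obstacle is the cyclic additivity step: showing that the coherent interaction between the two compatible monoidal products on $\Funex{\Cbold}{\Cbold}$ and the circle symmetry on its trace assembles into a genuine $E_2 \rtimes S^1$-action, rather than merely separate $E_2$ and $S^1$ actions that happen to share an underlying object. This likely demands a systematic categorification of the trace construction --- working with traces of $1$-morphisms in the $(\infty,2)$-category of dualizable linear categories together with their full cyclic functoriality --- so that the Calabi--Yau condition becomes genuine structure, rather than merely property, on the monoidal $\infty$-category of endofunctors. Once this categorified framework is in place, compatibility with the Deligne $E_2$-structure should be automatic by construction, both the smooth and proper cases should be handled uniformly by dualizing the relevant trace/cotrace picture, and the passage to the relative setting should be formal.
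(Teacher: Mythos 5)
Your outline correctly identifies where the difficulty lies, but it does not close it: the entire content of the theorem is concentrated in your ``cyclic additivity'' step, and your proposal offers no mechanism for proving it. Transporting the circle action from $\hoch{\Cbold}$ to $\hochco{\Cbold}$ along the cap-product equivalence only produces an $S^{1}$-action on the underlying object; a framed $E_{2}$-algebra is an $SO(2)$-homotopy-fixed point in $E_{2}$-algebras, i.e.\ an infinite tower of coherences between the circle action and the two multiplications, compatible moreover with the $SO(2)$-action on the operad $E_{2}$ itself. You acknowledge this (``rather than merely separate $E_{2}$ and $S^{1}$ actions that happen to share an underlying object''), but the proposed remedy --- axiomatizing a ``Calabi--Yau monoidal $\infty$-category'' and hoping that a categorified trace formalism makes the coherences automatic --- is a restatement of the goal, not an argument. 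In particular it is not clear that the smooth Calabi--Yau datum (a class $\theta \in \hoch{\Cbold}^{hS^{1}}[-d]$ with cap-product condition) can be repackaged as a trace pairing exhibiting a duality on $\Endd{\LinCat}{\Cbold}$ at all, and no candidate proof of the asserted ``cyclic Dunn additivity'' is sketched. As written, the proposal defers exactly the step that needs to be proven.

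The paper's route is designed to avoid this transport-and-coherence problem entirely. Its key structural input is \Cref{t:framed E-2}: using factorization algebras on the stratified plane $\mathbb{R}^{2}_{*} \simeq C(S^{1})$ and the cone-decomposition \Cref{t:algebras on cone}, a framed $E_{2}$-algebra is identified with a triple consisting of an $E_{2}$-algebra $R$, a circle-equivariant pointed module $M$ over $\hoch{R}$, and the condition that $R \arr \hoch{R} \arr M$ is an isomorphism. The circle symmetry thus lives only where it exists intrinsically --- on Hochschild chains and the module $M$ --- and never has to be imposed on $R=\hochco{\Cbold \stackrel{f}\arr \Dbold}$; the framed structure is then assembled automatically by the factorization-homology description (additivity, pushforward along the norm map $\mathbb{R}^{2}-\{0\} \arr \mathbb{R}_{>0}$, and the identification of $\int_{S^{1}}$ with $\hoch{-}$ and its $SO(2)$-action). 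Combined with the relative non-commutative calculus of \Cref{relcalc}, which makes $\hoch{\Cbold \stackrel{f}\arr \Dbold}$ a circle-equivariant module over $\hoch{\hochco{\Cbold \stackrel{f}\arr \Dbold}}$, the Calabi--Yau/orientation class $\theta$ supplies exactly the pointing and the cap-product isomorphism, and \Cref{relcycdel} follows. If you want to salvage your approach, you would in effect have to prove a statement equivalent to \Cref{t:framed E-2}; that is where the real work lies, and it is missing from your proposal.
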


As an application of our main theorem, we establish the following result of interest in string topology.

\begin{thm}
Let $\partial M \subset M$ be a closed, oriented $d$-manifold with boundary. Then the relative loop homology $H_{*}(LM,L \partial M)[-d]$ carries a natural chain-level framed $E_{2}$-algebra structure
\end{thm}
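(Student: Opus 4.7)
The plan is to realize the shifted relative loop homology $H_{*}(LM, L\partial M)[-d]$ as the relative Hochschild cochains of a dualizable linear functor equipped with a relative Calabi-Yau structure of dimension $d$, and then invoke the main theorem. Let $i: \partial M \injarr M$ denote the boundary inclusion, and consider the induced pushforward functor on dualizable categories of ind-local systems,
\[
i_{*}: \Loc(\partial M) \arr \Loc(M).
\]
By the cobordism-theoretic results of Brav-Dyckerhoff \cite{bravdyck}, the orientation of $M$ together with the induced orientation of $\partial M$ equip $i_{*}$ with a natural relative Calabi-Yau structure of dimension $d$.

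Applying the main theorem to this functor then endows the relative Hochschild cochains $\hochco{\Loc(\partial M) \stackrel{i_{*}}{\arr} \Loc(M)}$ with a canonical framed $E_{2}$-algebra structure. It remains to identify this chain complex with $H_{*}(LM, L\partial M)[-d]$. In the absolute closed case, this identification is the classical chain
\[
\hochco{\Loc(M)} \simeq \hoch{\Loc(M)}[-d] \simeq C_{*}(LM)[-d],
\]
where the first isomorphism is the non-commutative Poincar\'e duality provided by the smooth Calabi-Yau class, and the second is the Burghelea-Fiedorowicz-Goodwillie-Jones theorem. In the relative setting, the relative Calabi-Yau class provides a non-commutative Poincar\'e-Lefschetz duality identifying the relative Hochschild cochains of $i_{*}$ with a $d$-fold shift of the relative Hochschild chains.

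The main technical obstacle is therefore a relative version of the Burghelea-Goodwillie-Jones computation, showing that the relative Hochschild chains of $i_{*}$ compute $C_{*}(LM, L\partial M)$. I would approach this by expressing the relative Hochschild chains as the cofiber of the natural map $\hoch{\Loc(\partial M)} \arr \hoch{\Loc(M)}$ induced by $i_{*}$, appealing to the absolute identification $\hoch{\Loc(X)} \simeq C_{*}(LX)$ for $X = \partial M$ and $X = M$, and checking that this identification is natural along the pushforward $i_{*}$, so that the cofiber computes $C_{*}(LM, L\partial M)$. A final check that the framed $E_{2}$-structure transported along these identifications agrees with the expected chain-level string topology operations then completes the proof.
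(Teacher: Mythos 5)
Your proposal is correct and follows essentially the same route as the paper: the paper also applies the relative cyclic Deligne theorem to the induction functor $i_{!}:\Loc{\partial M}\arr\Loc{M}$, equipped with the relative Calabi--Yau structure of \cite{bravdyck} obtained by pushing $[M,\partial M]$ forward along constant loops, and uses the (circle-equivariant, natural) Goodwillie--Jones identification $\hoch{\Loc{M},\Loc{\partial M}}\simeq {\bf H}_{*}(LM,L\partial M)$ exactly as you outline. The ``relative Goodwillie--Jones'' step you flag as the main obstacle is handled in the paper precisely by your suggested argument, via functoriality of Hochschild homology for dualizable functors applied to the cofiber sequence.
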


In the case of empty boundary, this gives a chain-level refinement of the ``loop homology algebra'' of Chas-Sullivan \cite{chassull}, which moreover works for arbitrary coefficients. For further discussion of what was already known about chain-level string topology, see the discussion below of related work.

Other applications include the construction of a framed $E_{2}$-algebra structure for the relative Hochschild cochains for an anti-canonical divisor $Z \subset X$ in a variety (for example, for a cubic curve $E \subset \PP^{2}$) and for the ``non-commutative moment map'' into the path algebra of a doubled quiver, appearing in the theory of Nakajima quiver varieties.

Our approach to establishing the cyclic Deligne conjecture is conceptual rather than combinatorial. In \Cref{nccalc}, we review the Eckmann-Hilton type proof of the Deligne conjecture, following Lurie, and generalize it to the case of relative Hochschild cohomology $\hochco{\Cbold \stackrel{f}\arr \Dbold}$ of a linear functor. For dualizable functors $\Cbold \stackrel{f}\arr \Dbold$, we then construct the non-commutative calculus for relative Hochschild cochains $\hochco{\Cbold \arr \Dbold}$ acting on relative Hochschild chains $\hoch{\Cbold \arr \Dbold}$ and use this to formulate the notion of non-commutative relative orientation on a dualizable functor $\Cbold \stackrel{f}\arr \Dbold$ as a non-commutative analogue of a relative fundamental class from topology, with the role of relative homology replaced by relative Hochschild chains. In \Cref{factalg}, we use the theory of factorization algebras on stratified spaces, in the sense of Ayala-Francis-Tanaka \cite{AFT}, to decompose the category of framed $E_{2}$-algebras into simpler components. Namely, we establish the following.

\begin{thm}
The datum of a framed $E_{2}$-algebra is equivalent to the data of an $E_{2}$-algebra $R$, a circle-equivariant module $M$ over the Hochschild chains $\hoch{R}$, and a circle-invariant vector $m \in M^{hS^{1}}$ such that the composite $R \arr \hoch{R} \stackrel{- \cap m}\arr M$ is an isomorphism.
\end{thm}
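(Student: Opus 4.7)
The plan is to realize framed $E_{2}$-algebras as constructible factorization algebras on $\mathbb{R}^{2}$ equipped with the conical stratification $\{0\} \subset \mathbb{R}^{2}$ and the rotationally invariant polar framing, then apply the recollement theorem for factorization algebras on stratified spaces of Ayala-Francis-Tanaka to decompose this data according to strata. Because the polar framing is $SO(2)$-symmetric around the origin, constructible factorization algebras on this stratified framed space precisely encode framed $E_{2}$-algebras in the ambient symmetric monoidal $\infty$-category. The AFT recollement then presents such a factorization algebra as three pieces: (a) its restriction to the open $2$-stratum, (b) its value at the $0$-stratum, and (c) a linking action of (a) on (b) along the link circle of the origin.

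I would identify each piece as follows. Piece (a), restricted to small contractible disks missing the origin (where polar and flat framings coincide up to contractible choice), yields an ordinary $E_{2}$-algebra $R$. Piece (b) is an object $M$ in the ambient symmetric monoidal $\infty$-category. The link of the origin is a circle $S^{1}$ with its natural rotational symmetry, so factorization homology of $R$ along this linking circle is by construction $\hoch{R}$ with its standard $S^{1}$-action. The linking datum (c) is therefore precisely an $S^{1}$-equivariant $\hoch{R}$-module structure on $M$, arising from multiplication of the $R$-value on a disk in a collar of the origin with the $M$-value on a disk containing the origin. Finally, the value of the factorization algebra on a single disk containing the origin, which inherits residual $SO(2)$-symmetry from the polar framing, provides a circle-invariant pointing $m \in M^{hS^{1}}$.

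The main technical step is to show that local constancy of the stratified factorization algebra is equivalent to the nondegeneracy condition that $R \arr \hoch{R} \stackrel{-\cap m}\arr M$ be an isomorphism. The forward direction follows because shrinking a disk near but not containing the origin onto the origin disk must, by local constancy, yield an equivalence between the resulting merge and the module action of the disk's $R$-value on $m$; running this over the linking collar realizes the cap product map with $m$ as the local constancy equivalence. For the converse, I would assemble a stratified factorization algebra from $(R, M, m)$ using factorization homology over the open stratum, the prescribed $\hoch{R}$-module structure on $M$ at the origin stratum, and $m$ as the unit that glues the two pieces together. I expect the main obstacle to be this converse direction: verifying that the full $SO(2)$-symmetry of the framed $E_{2}$-structure is genuinely reconstructed from the circle-equivariance of $M$ together with the circle-invariance of $m$, rather than some weaker homotopy coherent shadow. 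This should ultimately reduce to a stratified refinement of the Dunn-additivity argument underlying the ordinary Deligne conjecture, now accounting for the rotational symmetry imposed at the origin stratum.
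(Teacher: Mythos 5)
Your geometric skeleton (stratify $\mathbb{R}^2$ at the origin, decompose a constructible factorization algebra into its restriction to the punctured plane, its value at the cone point, and a link action identified with an $\hoch{R}$-module structure, and match local constancy with the nondegeneracy of $-\cap m$) is the same as the paper's, but there is a genuine gap at the very first step, and it is exactly the one you flag at the end as an "expected obstacle": the framed structure is never actually encoded. The category $\on{Alg}_{M}(\CV)$ of factorization algebras on a manifold $M$ does not depend on a choice of framing at all -- a framing only enters as a functor $\on{Alg}_{E_2}(\CV)\to\on{Alg}_{\mathbb{R}^2}(\CV)$ -- so "constructible factorization algebras on the conically stratified plane with the polar framing" is just $\on{Alg}_{\mathbb{R}^2_*}(\CV)$, which contains (locally constant) $E_2$-algebras but carries no equivariance data. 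A framed $E_2$-algebra is strictly more than an $E_2$-algebra: it is a homotopy fixed point for the $SO(2)$-action, i.e.\ an infinite tower of coherences, and no choice of rotation-invariant framing on the underlying space can manufacture that tower. Consequently your piece (a) is not canonically an $E_2$-algebra with no residual structure, your pointing has no reason to be circle-invariant ("inherits residual $SO(2)$-symmetry from the polar framing" is not a construction), and your converse direction -- reconstructing the full framed structure from $(R,M,m)$ -- is left entirely open.

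The paper closes this gap as follows: it first identifies $\on{Alg}_{fE_2}(\CV)\simeq \on{Alg}_{E_2}(\CV)^{SO(2)}$, then observes that both the fully faithful restriction $\on{Alg}_{\mathbb{R}^2}(\CV)\to\on{Alg}_{\mathbb{R}^2_*}(\CV)$ and the cone decomposition $\on{Alg}_{\mathbb{R}^2_*}(\CV)\simeq \on{Alg}_{\mathbb{R}^2-\{0\}}(\CV)\times_{\on{Alg}(\CV)}\on{AlgMod}_*(\CV)$ are $SO(2)$-equivariant, and passes to homotopy fixed points of the categories. The decisive input, which has no counterpart in your sketch, is Lurie's equivalence $\on{Alg}_{S^1}(\CV)^{SO(2)}\simeq\on{Alg}_{E_1}(\CV)$: via polar coordinates and additivity it gives $\on{Alg}_{\mathbb{R}^2-\{0\}}(\CV)^{SO(2)}\simeq\on{Alg}_{E_2}(\CV)$ and identifies the pushforward along the norm map with $\hoch{-}$ as a circle-equivariant algebra. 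This is precisely how a \emph{plain} $E_2$-algebra $R$ together with a circle-equivariant pointed $\hoch{R}$-module $(M,m\in M^{hS^{1}})$ emerges, and it makes both directions of the equivalence automatic, with no separate reconstruction of the symmetry needed. Two further points: the cone decomposition is not an off-the-shelf AFT recollement -- the paper proves it (its Theorem on algebras on a cone) by a Barr--Beck/monadicity argument over $\on{Alg}_{X\times\mathbb{R}_{>0}}(\CV)\times\CV$ -- and the resulting statement is a fully faithful embedding with essential image cut out by the condition that $R\to\hoch{R}\xrightarrow{-\cap m}M$ is an isomorphism, which is where your local-constancy observation is used.
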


Finally, in \Cref{cycdel}, we note that the ingredients in the above theorem necessary for building a framed $E_{2}$-algebra are provided precisely by the data of a relative Calabi-Yau structure. In this way, we obtain a general version of the cyclic Deligne conjecture, giving in particular the existence of framed $E_{2}$-algebra structures on Hochschild cochains arising in various examples from topology, algebraic geometry, and representation theory.

\medskip

\sssec{Related work}
To our knowledge, framed $E_{2}$-algebras related to Calabi-Yau structures first appeared in the guise of ``BV algebras'' in the context of topological conformal field theory and mirror symmetry. A BV algebra is  an algebra over the homology of the framed $E_{2}$-operad, and dg BV or homotopy BV algebras (in characteristic zero) are models for chain-level framed $E_{2}$-algebras, and typically arise as genus zero operations in some chain-level $2$d topological conformal field theory.  Among many relevant works, we mention \cite{wittenantibracket}, \cite{penkschw}, \cite{lianzuck}, \cite{getzlerbv}, \cite{barannkont}, and \cite{maninfrob}. Particularly relevant to us is Costello's construction \cite{costellotcft} of the all genus topological conformal field theory associated to a cyclic $A_{\infty}$-category over a field (a rigid version of what we call a proper Calabi-Yau category), as well as the alternative construction of the same structure by Kontsevich-Soibelman \cite{kontsoib}. Restricting this field theory to genus zero gives a framed $E_{2}$-algebra structure on Hochschild cochains of a cyclic $A_{\infty}$-category over a field. Alternatively, there has also been interesting work of Kauffmann \cite{kauf}  and Tradler-Zeinalian \cite{tradzein} for Frobenius algebras and Ward \cite{ward}  for cyclic $A_{\infty}$-categories, which give a more direct construction of the framed $E_{2}$-algebra structure on Hochschild cochains.


All of this work is in the context of proper Calabi-Yau categories, but arguably it is smooth Calabi-Yau categories that are more fundamental, with proper Calabi-Yau categories tending to arise as subcategories of smooth Calabi-Yau categories. A relatively simple yet rich example of a smooth Calabi-Yau category is that of (derived) local systems $\Loc{M}$ on a closed oriented manifold $M$. By the theorem of Goodwillie and of Jones \cite{goodwillie},\cite{jones}, the Hochschild chains of $\Loc{M}$ are circle-equivariantly equivalent to the chains $C_{*}(LM)$ on the free loop space $LM$, and an appropriate shift of the latter should therefore carry the structure of a framed $E_{2}$-algebra. At the level of homology, such a BV algebra structure was constructed by Chas-Sullivan \cite{chassull}, and chain-level models were constructed by Irie \cite{irie} and Drummond-Cole-Poirier-Rounds \cite{drummondstringtop}. Recently, Kontsevich-Takeda-Vlassoupolos \cite{ktv1,ktv2} have constructed chain-level all genus operations on the Hochschild chains of ``pre-Calabi-Yau categories'', which include the case of smooth Calabi-Yau categories over a characteristic zero field. Up to issues of unitality, the genus zero operations in this theory therefore endow shifted Hochschild chains of such a category with the structure of framed $E_{2}$-algebra. 

Our construction subsumes all of the genus zero examples above and enjoys a number of advantages. First, while the constructions just reviewed depend a priori on specific rigid, combinatorial models of the framed $E_{2}$ operad and the Hochschild cochains and typically require working over a ground field, our construction applies to very general $\infty$-categories in which rigid models need not apply. For instance, we are able to treat examples arising in stable homotopy theory or derived algebraic geometry. Second, our approach is manifestly invariant and at the same time computable, via the natural decomposition of a framed $E_{2}$-algebra into simpler components.  Finally, and perhaps most interestingly, our approach applies to relative Calabi-Yau structures, giving genuinely new examples of framed $E_{2}$-algebras arising in topology, algebraic geometry, and representation theory, including cases where the standard formalisms for constructing all genus field theories do not apply.

\ssec{Notation and Conventions}
\sssec{} In this paper, we work in the setting of $\infty$-categories.  We adopt the convention that ``category'' means $\infty$-category, and all categorical constructions are homotopy invariant, so for example ``limit'' means homotopy limit, all functors are ``total derived", and so on. 

Let $\Acal$ be a symmetric monoidal category and $\Mcal$ an $\Acal$-module category. Given two objects $x, y \in \Mcal$, consider the functor 
\begin{equation}\label{homobj}
\begin{gathered}
\Acal^{op} \arr \Spc \\
 a \mapsto \Mapp{\Mcal}{a \otimes x}{y}.
 \end{gathered}
 \end{equation}
When this functor is representable, the representing object is denoted $\Homm{\Acal}{x}{y} \in \Acal$. In the special case $x=y$, we write $\Endd{\Acal}{x}=\Homm{\Acal}{x}{x} \in \Acal$. By \cite[Sect. 4.7]{lurieHA}, $\Endd{\Acal}{x}$ is an associative algebra object in $\Acal$.

\sssec{}

Throughout this paper, we fix a presentably symmetric monoidal category $\Abold$, that is, a commutative algebra object in ${\rm Pr}^L$, the category of presentable categories and colimit preserving (aka continuous) functors endowed with the Lurie tensor product of presentable categories. See \cite[Chapter 1]{gr1} for a summary. 

In applications to topology, one often takes $\Abold=\Mod{E}$, the category of modules over a commutative ring spectrum $E$, and then $\LinCat$ is the category of presentable $E$-linear stable categories and $E$-linear functors. In particular, if $E$ is the sphere spectrum, then $\LinCat={\rm Stab}$, the category of presentable stable categories and continuous functors, while if $E=k$ a field, then $\LinCat=\DGCat$, the category of presentable $k$-linear DG categories and continuous $k$-linear functors. In some applications to algebraic geometry and representation theory, one takes $\Abold=\QCoh{X}$, the category of quasi-coherent sheaves on a prestack $X$. The reader can bear in mind these examples. 

\sssec{}

An object $\Cbold \in \LinCat$ is endowed with a continuous, coherently unital and associative action functor $\Abold \otimes \Cbold \arr \Cbold$, and a map between objects $f : \Cbold \arr \Dbold$ is a continuous functor coherently commuting with action functors. In particular, considering $\Abold$ as module over itself, the data of a continuous $\Abold$-functor $\Abold \arr \Cbold$ is given simply by any choice of image object $1_{\Abold} \in \Abold \mapsto x \in \Cbold$ for the symmetric monoidal unit $1_{\Abold} \in \Abold$. 

Note that by the adjoint functor theorem, any functor $\Abold \arr \Cbold$ has a right adjoint $\Homm{\Abold}{x}{-} : \Cbold \arr \Abold$. Evaluated on $x$, we obtain the endomorphism object $\Endd{\Abold}{x} \in \Abold$, which is endowed with a natural associative algebra structure in $\Abold$. 

\sssec{}

$\LinCat$ itself is symmetric monoidal with respect to the relative tensor product, denoted $\Cbold \otimes_{\Abold} \Dbold$, and possesses an internal Hom, denoted $\Homm{\LinCat}{\Cbold}{\Dbold}$.  

We denote by $\LinCat^{\rm dual} \subset \LinCat$ the subcategory of $\Abold$-linear categories that are dualizable over $\Abold$, that is, with respect to the relative tensor product, and $\Abold$-module functors $f: \Cbold \arr \Dbold$ with $\Abold$-linear right adjoints. We call such $\Abold$-module functors ``dualizable.''

\sssec{}

We emphasize that the dualizability condition on a linear category is $1$-categorical and takes place inside the category $\LinCat$ of big $\Abold$-linear categories. This condition should not be confused with $2$-dualizability/dualizability of small dg categories, a much more restrictive condition equivalent to a category simultaneously being smooth and proper. 

If $\Abold=\Vect$ is the category of DG vector spaces, for example, then $\LinCat=\DGCat$ is the category of presentable DG categories. Among the dualizable DG categories are all compactly generated DG categories, such as the familiar categories of DG modules $\LMod{R}$ over a DG algebra $R$ or quasi-coherent complexes $\QCoh{X}$ on a quasi-compact, quasi-separated scheme $X$, but also many non-compactly generated categories such  as the category of sheaves of DG vector spaces $\Sh(M)$ on a locally compact Hausdorff space $M$. 

For more on dualizability for linear categories, see \cite[Chapter 1]{gr1}.

\ssec{Acknowledgements}
The authors would like to thank Damien Calaque for drawing their attention to Horel's work on non-commutative calculus and factorization algebras and suggesting that framed $E_2$-algebras should be obtained in the same spirit.  We are also grateful to David Ayala, Dmitry Kaledin, Manuel Rivera, and Mahmoud Zeinalian for conversations about and interest in this project.
C.B. was partially supported by a ``Junior Leader'' grant no. 21-7-2-30-1 from the Basis Foundation.

\section{Noncommutative calculus for functors}\label{nccalc}

\ssec{$2$-algebras from endomorphisms of a monoidal unit/Eckmann-Hilton}

In this subsection, we review Lurie's approach to the Deligne conjecture for Hochschild cohomology (see the proof of \cite[Proposition 5.3.12]{luriefmp}), before generalizing the argument to relative Hochschild cohomology in the next subsection.

\sssec{}

As above, $\Abold$ denotes a fixed presentably symmetric monoidal category and $\LinCat$ denotes the category of $\Abold$-module categories with $\Abold$-linear functors.  We have the functor
$$ \on{Mod}: \Alg{\Abold} \to \LinCat $$
taking an associative algebra $R$ in $\Abold$ to the category $\Mod{R}$ of $R$-modules (in $\Abold$).

By \cite[Remark 4.8.5.17]{lurieHA}, this functor carries a natural symmetric monoidal structure. In particular, there is a natural equivalence $(\Mod{R}) \otimes_{\Abold} (\Mod{R^{'}}) \simeq \Mod{(R \otimes_{A} R^{'})}$. We therefore obtain an induced functor
\begin{equation}\label{2algtomoncat}
\AlgTwo{\Abold} := \Alg{\Alg{\Abold}} \arr \Alg{\LinCat}
\end{equation}
from $2$-algebras (``algebras in algebras") to monoidal $\Abold$-module categories. 

Roughly, given a $2$-algebra $R \in \AlgTwo{\Abold} \simeq \Alg{\Alg{\Abold}}$, the category $\Modd{\Abold}{R}$ of $R$-modules (in $\Abold$) is formed with respect to the ``inner" algebra structure of $R \in \AlgTwo{R}$, while the monoidal structure on $\Modd{\Abold}{R}$ uses the ``outer" algebra structure of $R \in \AlgTwo{\Abold}$ to form tensor products of $R$-modules.

\sssec{}
Now, let $\LinCat^{*} := (\LinCat)_{\Abold/}$ denote the category of \emph{pointed} $\Abold$-module
categories.  An object of $\LinCat^*$ is an $\Abold$-module category $\Cbold$ together with an object $x \in 
\Cbold$ and a morphism $(\Cbold, x) \to (\Cbold', x')$ is a morphism $f: \Cbold \to \Cbold'$ 
in $\LinCat$ together with an isomorphism $f(x) \simeq x'$.  By \cite[Theorem 2.2.2.4]{lurieHA}, $\LinCat^{*}$ is a symmetric monoidal category.  We have the symmetric monoidal functor
\[
\Alg{\Abold} \arr \LinCat^{*}
\]
taking an associative algebra $R$ in $\Abold$ to the pointed $\Abold$-module category $(\Mod{R},R)$, with right adjoint
\[
\LinCat^{*} \arr \Alg{\Abold}
\]
given by sending a pointed $\Abold$-module category $(\Cbold,x \in \Cbold)$ to the algebra of endomorphisms $\Endd{\Abold}{x} \in \Abold$.

\medskip
Now, we have the natural equivalence $\Alg{\LinCat} \simeq \Alg{\LinCat^*}$.  Therefore, the functor
$\AlgTwo{\Abold} \arr \Alg{\LinCat}$ is isomorphic to the functor
$$ \AlgTwo{\Abold} \simeq \Alg{\Alg{\Abold}} \to \Alg{\LinCat^*} \simeq \Alg{\LinCat} $$
and admits a right adjoint
\begin{equation}\label{endsofunit}
\Alg{\LinCat} \arr \AlgTwo{\Abold}
\end{equation}
given by sending a monoidal $\Abold$-module category $\Rbold$ to the endomorphism algebra $\Endd{\Abold}{1_{\Rbold}}$ endowed with a $2$-algebra structure. 

Roughly, $\Endd{\Abold}{1_{\Rbold}}$ is endowed with two compatible algebra structures, the ``inner" structure coming from composition of endomorphisms (this exists for endomorphisms of any object) and the ``outer" structure coming from tensor product of endomorphisms of the unit (which makes sense only for a unit object).

\sssec{}\label{sss:action of 2-alg}

Given an $\Abold$-linear monoidal category $\Rbold$ and an $\Abold$-linear category $\Cbold$, we may consider $\Rbold$-module structures on $\Cbold$, which by definition of endomorphism object identify with $\Abold$-linear monoidal functors $\Rbold \arr \Endd{\LinCat}{\Cbold}$. In particular, given a $2$-algebra $R$ in $\Abold$, we may consider actions of the monoidal category $\Modd{\Abold}{R}$ on $\Cbold$, in which case we speak simply of an action of the $2$-algebra $R$ on $\Cbold$. By the adjunction
$$
\xymatrix{
\AlgTwo{\Abold} \ar@<0.8ex>[r] & \ar@<0.8ex>[l] \Alg{\LinCat}
},$$
an $R$-action $\Modd{\Abold}{R} \arr \Endd{\LinCat}{\Cbold}$ is equivalent to a map of $2$-algebras $R \arr \hochco{\Cbold}$. In this sense, $\hochco{\Cbold}$ is characterized as the universal $2$-algebra acting on $\Cbold$.

\sssec{}
The above considerations together with Dunn's additivity theorem (see \cite[Theorem 5.1.2.2]{lurieHA}) establish the following version of the Deligne conjecture.

\begin{prop}\label{Deligne conjecture}
Let $\Cbold \in \LinCat$ be an $\Abold$-linear category, $\Endd{\LinCat}{\Cbold}$ the monoidal $\Abold$-module category of continuous $\Abold$-linear functors, whose monoidal structure is given by composite of endofunctors and whose monoidal unit $\Id{\Cbold}$ is the identity functor. The Hochschild cohomology object 
\[
\hochco{\Cbold} := \Endd{\Abold}{\Id{\Cbold}}
\]
carries a natural structure of $2$-algebra, in which the ``inner" algebra structure is composition of natural transformations and the ``outer" algebra structure is induced by composite of endofunctors. Moreover, $\hochco{\Cbold}$ is the universal $2$-algebra acting on $\Cbold$, in the sense that for any other $2$-algebra $R$, the datum of an action of $R$ on $\Cbold$ is equivalent to the datum of a map of $2$-algebras $R \arr \hochco{\Cbold}$.

By Dunn additivity, the $2$-algebra structure on $\hochco{\Cbold}$ is equivalent to an $E_{2}$-algebra structure.
\end{prop}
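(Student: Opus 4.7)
The plan is to apply the right adjoint functor (\ref{endsofunit}) directly to the monoidal $\Abold$-linear category $\Endd{\LinCat}{\Cbold}$, whose monoidal structure is given by composition of endofunctors. Since this right adjoint sends an object $\Rbold \in \Alg{\LinCat}$ to $\Endd{\Abold}{1_{\Rbold}}$ equipped with its canonical 2-algebra structure, and since the monoidal unit of $\Endd{\LinCat}{\Cbold}$ is the identity endofunctor $\Id{\Cbold}$, the output is precisely
\[
\hochco{\Cbold} = \Endd{\Abold}{\Id{\Cbold}}
\]
endowed with a 2-algebra structure. Unwinding the construction, the inner multiplication is the associative algebra structure on the endomorphism object of a single object, i.e.\ vertical composition of natural transformations, while the outer multiplication is induced by the monoidal product on $\Endd{\LinCat}{\Cbold}$, sending two endomorphisms of $\Id{\Cbold}$ to their composite endomorphism of $\Id{\Cbold} \circ \Id{\Cbold} \simeq \Id{\Cbold}$, i.e.\ horizontal composition. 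The compatibility between these two structures is precisely the 2-algebra compatibility packaged by the adjunction.

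The universal property has essentially been recorded already in \Cref{sss:action of 2-alg}. By definition, an action of a 2-algebra $R$ on $\Cbold$ is a monoidal $\Abold$-linear functor $\Modd{\Abold}{R} \to \Endd{\LinCat}{\Cbold}$, and the adjunction between $\AlgTwo{\Abold}$ and $\Alg{\LinCat}$, applied to $\Rbold = \Endd{\LinCat}{\Cbold}$, identifies such data with maps of 2-algebras $R \to \hochco{\Cbold}$. The final assertion, passing from the 2-algebra structure to an $E_{2}$-algebra structure, is a direct invocation of Dunn's additivity theorem \cite[Theorem 5.1.2.2]{lurieHA}, which supplies a natural equivalence between 2-algebras and $E_{2}$-algebras in $\Abold$.

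Thus the proposition is a formal consequence of the constructions already in place, and nearly all of the conceptual work has been front-loaded into the previous subsection: first, constructing the symmetric monoidal functor (\ref{2algtomoncat}) via the multiplicativity of the functor $\on{Mod}$ on algebras (which is what allows one to pass between 2-algebras and monoidal module categories), and second, identifying its right adjoint with the ``endomorphisms-of-unit'' functor (\ref{endsofunit}). The genuine obstacle, in other words, is not the present argument but rather recognizing Hochschild cochains as the endomorphism algebra of the monoidal unit inside $\Endd{\LinCat}{\Cbold}$; once this reformulation is available, and Dunn additivity is cited, there is nothing combinatorial left to verify.
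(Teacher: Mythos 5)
Your proposal is correct and follows exactly the route the paper takes: the paper treats the proposition as an immediate consequence of the adjunction between $\AlgTwo{\Abold}$ and $\Alg{\LinCat}$, with the $2$-algebra structure on $\hochco{\Cbold}$ produced by the endomorphisms-of-unit functor \Cref{endsofunit} applied to $\Endd{\LinCat}{\Cbold}$, the universal property coming from \Cref{sss:action of 2-alg}, and the passage to $E_2$-algebras by Dunn additivity. Your identification of the inner and outer multiplications and your observation that the real work was front-loaded into the preceding constructions match the paper's (essentially proofless) presentation.
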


\sssec{}

We emphasize that, by definition, $\hochco{\Cbold}$ is a $2$-algebra in $\Abold$. In particular, if $\Abold=\Spct$, the category of spectra, then $\hochco{\Cbold}$ is what is sometimes called in the literature ``the topological Hochschild cohomology spectrum'', while if $\Abold=\DGCat$, then $\hochco{\Cbold}$ is the ``usual'' Hochschild cohomology complex over the ground field $k$.

\ssec{Hochschild cohomology of a functor}

In the previous subsection we have constructed Hochschild cohomology $\hochco{\Cbold}$ of an $\Abold$-linear category $\Cbold$ as the universal $2$-algebra in $\Abold$ acting on $\Cbold$. The goal of this subsection is to construct the Hochschild cohomology $\hochco{\Cbold \stackrel{f}\arr \Dbold}$ of an $\Abold$-linear functor $\Cbold \stackrel{f}\arr \Dbold$ as the universal $2$-algebra acting on source and target and making the arrow linear with respect to those actions.

\sssec{}

Consider the category of arrows $\Arr{\LinCat}$ of $\Abold$-module categories, whose objects are continuous $\Abold$-linear functors $f: \Cbold \arr \Dbold$ and whose morphisms are commutative squares
\begin{equation}\label{arrowsquare}
\xymatrix{\Cbold_{1} \ar[d]^{\Phi} \ar[r]^{f_{1}} & \Dbold_{1} \ar[d]^{\Psi} \\
\Cbold_{2} \ar[r]^{f_{2}} & \Dbold_{2},}
\end{equation}
that is, functors as above together with a natural isomorphism $f_{2} \circ \Phi \simeq \Psi \circ f_{1}$.

Note that $\Arr{\LinCat}$ has an evident pointwise symmetric monoidal structure with respect to which the source and target functors $s,t : \Arr{\LinCat} \arr \LinCat$ are symmetric monoidal. Moreover, the symmetric monoidal category $\LinCat$ acts on itself hence acts pointwise on the arrow category $\Arr{\LinCat}$, and the source and target functors are $\Abold$-linear.

\sssec{}

Given an object $\Cbold \stackrel{f}\arr \Dbold \in \Arr{\LinCat}$, we claim that the endomorphism object
\[
\Endd{\LinCat}{\Cbold \stackrel{f}\arr \Dbold} \in \LinCat
\]
exists. Indeed, in $\LinCat$ each of the internal Hom objects $\Homm{\LinCat}{\Cbold}{\Cbold}, \Homm{\LinCat}{\Dbold}{\Dbold}$, and  $\Homm{\LinCat}{\Cbold}{\Dbold}$ exists, and it is easy to check from the definition of the endomorphism object as representing the functor \Cref{homobj} that we have an equivalence of $\Abold$-module categories
\begin{equation}\label{endfibprod}
\Endd{\LinCat}{\Cbold \stackrel{f}\arr \Dbold} \simeq \Homm{\LinCat}{\Cbold}{\Cbold} \times_{\Homm{\LinCat}{\Cbold}{\Dbold}} \Homm{\LinCat}{\Dbold}{\Dbold}.
\end{equation}

By definition, the {\it relative Hochschild cohomology} of the functor $\Cbold \stackrel{f}\arr \Dbold$ is the $\Abold$-linear endomorphisms of the unit $\Id{f} \in \Endd{\LinCat}{\Cbold \stackrel{f}\arr \Dbold}$:
\[
\hochco{\Cbold \stackrel{f}\arr \Dbold}:=\Endd{\Abold}{\Id{f}}.
\]

\sssec{}

We have the following generalization of the Deligne conjecture.

\begin{prop}[Relative Deligne conjecture]\label{reldelconj}
Let $\Abold$ be a presentably symmetric monoidal category and $\Cbold \stackrel{f}\arr \Dbold$ an $\Abold$-linear functor. The relative Hochschild cohomology $\hochco{\Cbold \stackrel{f}\arr \Dbold}:= \Endd{\Abold}{\Id{f}}$
is the universal $2$-algebra acting on the arrow $\Cbold \stackrel{f}\arr \Dbold$, with actions on source and target given by natural maps of $2$-algebras $\hochco{\Cbold \stackrel{f}\arr \Dbold} \arr \hochco{\C}$ and $\hochco{\Cbold \stackrel{f}\arr \Dbold} \arr \hochco{\D}$. As a plain object of $\Abold$, there is a natural identification 
\[
\hochco{\Cbold \stackrel{f}\arr \Dbold} \simeq \hochco{\Cbold} \times_{\End{f}} \hochco{\Dbold}.
\]
\end{prop}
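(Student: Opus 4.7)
The argument parallels Lurie's proof of \Cref{Deligne conjecture}, with the arrow category $\Arr{\LinCat}$ replacing $\LinCat$ throughout. First I would record the formal properties of $\Arr{\LinCat}$ needed to run the Eckmann--Hilton machine: it is a presentably symmetric monoidal category under pointwise tensor product (with unit $\Id{\Abold}: \Abold \arr \Abold$), it is $\Abold$-linear, and the source and target functors to $\LinCat$ are symmetric monoidal and $\Abold$-linear. The content of \Cref{endfibprod} shows that for any $\Abold$-linear functor $f: \Cbold \arr \Dbold$ the endomorphism object $\Endd{\LinCat}{f}$ exists in $\LinCat$, is given by the explicit pullback description, and carries a monoidal $\Abold$-module structure whose unit is $\Id{f} = (\Id{\Cbold}, \Id{\Dbold})$.

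Next I would apply the general adjunction of \Cref{endsofunit} to the monoidal $\Abold$-module category $\Endd{\LinCat}{f}$. Crucially, the construction of this adjunction is a formal consequence of symmetric monoidality together with the existence of endomorphism algebras for pointed module categories, so it applies verbatim with $\Arr{\LinCat}$ replacing $\LinCat$. The right adjoint sends $\Endd{\LinCat}{f}$ to the $2$-algebra $\Endd{\Abold}{\Id{f}} = \hochco{\Cbold \stackrel{f}\arr \Dbold}$, and its universal property (as recalled in \Cref{sss:action of 2-alg}) asserts exactly that maps of $2$-algebras $R \arr \hochco{\Cbold \stackrel{f}\arr \Dbold}$ correspond to monoidal $\Abold$-linear actions $\Mod{R} \arr \Endd{\LinCat}{f}$, i.e., to $R$-actions on source and target compatible with $f$. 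The maps to $\hochco{\Cbold}$ and $\hochco{\Dbold}$ then arise by applying $\Endd{\Abold}{-}$ to the monoidal projections from the pullback \Cref{endfibprod}, which carry $\Id{f}$ to $\Id{\Cbold}$ and $\Id{\Dbold}$ respectively.

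Finally, for the identification as a plain object of $\Abold$, I would use that representing objects are preserved by limits in the represented variable. By \Cref{endfibprod} and \Cref{homobj}, the functor represented by $\Endd{\Abold}{\Id{f}}$ decomposes as a fiber product of representable functors
\[
\Mapp{\Homm{\LinCat}{\Cbold}{\Cbold}}{a \otimes \Id{\Cbold}}{\Id{\Cbold}} \times_{\Mapp{\Homm{\LinCat}{\Cbold}{\Dbold}}{a \otimes f}{f}} \Mapp{\Homm{\LinCat}{\Dbold}{\Dbold}}{a \otimes \Id{\Dbold}}{\Id{\Dbold}},
\]
and passing to representing objects gives the desired identification $\hochco{\Cbold \stackrel{f}\arr \Dbold} \simeq \hochco{\Cbold} \times_{\End{f}} \hochco{\Dbold}$, with $\End{f} := \Endd{\Abold}{f}$ and the structure maps from $\hochco{\Cbold}$ and $\hochco{\Dbold}$ given by whiskering with $f$ on the appropriate side. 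The only point requiring care is the verification that the adjunction of \Cref{endsofunit} is sufficiently formal to transport from $\LinCat^{*}$ to the pointed arrow category; once this is granted, the $2$-algebra structure, its universal property, the source/target compatibility, and the pullback identification all follow without further computation.
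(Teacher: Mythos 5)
Your proposal is correct and takes essentially the same route as the paper: both exhibit $\Endd{\LinCat}{\Cbold \stackrel{f}\arr \Dbold}$ as an $\Abold$-linear monoidal category with unit $\Id{f}$ via \Cref{endfibprod}, apply the adjunction \Cref{endsofunit} to get the $2$-algebra structure and the universal property, and read off the pullback identification from \Cref{endfibprod}. The one caveat you flag, transporting \Cref{endsofunit} to the pointed arrow category, is in fact not needed: $\Endd{\LinCat}{\Cbold \stackrel{f}\arr \Dbold}$ is already an object of $\Alg{\LinCat}$, so the adjunction applies as stated, and the maps to $\hochco{\Cbold}$ and $\hochco{\Dbold}$ (obtained in the paper from the symmetric monoidal source and target functors, in your write-up from the monoidal projections of the pullback) agree.
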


\begin{proof}
As an endomorphism object, $\Endd{\LinCat}{\Cbold \stackrel{f}\arr \Dbold} \in \LinCat$ carries a natural algebra structure, that is to say, an $\Abold$-linear monoidal structure, with monoidal unit $\Id{f}$ given by a commutative square \Cref{arrowsquare} having identity vertical arrows and identity natural transformation. 

As the endomorphisms of the unit $\Id{f}$ in the $\Abold$-linear monoidal category $\Endd{\LinCat}{\Cbold \stackrel{f}\arr \Dbold}$, $\hochco{\Cbold \stackrel{f}\arr \Dbold}$ carries a natural $2$-algebra structure in $\Abold$, via the functor \Cref{endsofunit}. Moreover, applying the symmetric monoidal source and target functors $s,t : \Arr{\LinCat} \arr \LinCat$, we obtain maps of $2$-algebras $\hochco{\Cbold \stackrel{f}\arr \Dbold} \arr \hochco{\Cbold}$ and  $\hochco{\Cbold \stackrel{f}\arr \Dbold} \arr \hochco{\Dbold}$, thus actions of $\hochco{\Cbold \stackrel{f}\arr \Dbold}$ on $\Cbold$ and $\Dbold$ for which the functor $\Cbold \stackrel{f}\arr \Dbold$ is linear. In short, $\hochco{\Cbold \stackrel{f}\arr \Dbold}$ ``acts on the arrow'' $\Cbold \stackrel{f}\arr \Dbold$.

In fact, by construction $\hochco{\Cbold \stackrel{f}\arr \Dbold}$ is the universal $2$-algebra in $\Abold$ acting on the arrow $\Cbold \stackrel{f}\arr \Dbold$. Indeed, given a $2$-algebra $R$ in $\Abold$, an action of $R$ on $\Cbold \stackrel{f}\arr \Dbold$ is by definition the datum of an $\Abold$-linear monoidal functor $\Modd{\Abold}{R} \arr \Endd{\LinCat}{\Cbold \stackrel{f}\arr \Dbold}$, which by \Cref{sss:action of 2-alg} is equivalent to the datum of a map of $2$-algebras $R \arr \hochco{\Cbold \stackrel{f}\arr \Dbold}$. 

Finally, note that via the equivalence \Cref{endfibprod}, we obtain an equivalence 
\begin{equation}\label{hochfibprod}
\hochco{\Cbold \stackrel{f}\arr \Dbold} \simeq \hochco{\Cbold} \times_{\End{f}} \hochco{\Dbold}.
\end{equation}

\end{proof}

\ssec{Non-commutative calculus for a functor}

The non-commutative calculus concerns the action of Hochschild cochains on Hochschild chains, and is the non-commutative analogue of the Cartan calculus of vector fields acting on differential forms. The relation between the two calculi is treated in \cite{bravroz2}.  The non-commutative calculus was developed by Tamarkin-Tsygan \cite{tamtsyg} and Kontsevich-Soibelman \cite{kontsoib} over a field and by Horel \cite{horel} for ring spectra and Iwanari \cite{iwanari} for compactly generated stable $\infty$-categories. 

In this subsection, we generalize the non-commutative calculus to the relative case of an $\Abold$-linear dualizable functor $\Cbold \stackrel{f}\arr \Dbold$, using functoriality properties of the trace.  


\sssec{}

Recall the subcategory $\LinCat^{\rm dual} \subset \LinCat$ 
of $\Abold$-module categories dualizable with respect to the tensor product over $\Abold$ and whose morphisms are ``dualizable'', that is, have $\Abold$-linear right adjoints. Given a dualizable $\Abold$-module category $\Cbold$, its {\it Hochschild homology} 
\[
\hoch{\Cbold}
\] 
is defined as the trace of the identity functor, that is, as the composite
\begin{gather*}
\Abold \stackrel{co}\arr \Cbold \otimes_{\Abold} \Cbold^{\vee} \simeq \Cbold^{\vee} \otimes_{\Abold} \Cbold \stackrel{ev}\arr \Abold
\end{gather*}
As an $\Abold$-linear endofunctor of  $\Abold$, the trace is completely determined by the image of $1_{\Abold}$ under the composite, and we shall simply identify $\hoch{\Cbold}$ with that object: $1_{\Abold} \mapsto \hoch{\Cbold}$. 

\sssec{}

When $\Abold=\Spct$, the category of spectra, then $\LinCat=\Stab$, the category of presentable stable categories. If $\Cbold \in \Stab$ is compactly generated, then in particular it is dualizable, and $\hoch{\Cbold}=THH(\Cbold)$, the topological Hochschild homology spectrum. For a more general dualizable stable category $\Cbold$, we take $\hoch{\Cbold}$ as the definition of the topological Hochschild homology spectrum. 

Likewise, when $\Abold=\Vect$, the category of DG vector spaces, then $\LinCat=\DGCat$, the category of presentable DG categories.  If $\Cbold \in \DGCat$ is compactly generated, then in particular it is dualizable, and $\hoch{\Cbold}$ is the ``usual'' Hochschild homology complex. For a more general dualizable DG category $\Cbold$, we take $\hoch{\Cbold}$ as the definition of ``usual''.

\sssec{}

By \cite[Theorem 2.14]{hoyetal}, Hochschild homology refines to a symmetric monoidal functor
\begin{equation}\label{hochhomfunct}
\hoch{-}: \LinCat^{\rm dual} \arr \Abold^{S^{1}} 
\end{equation}
from the category of dualizable $\Abold$-linear categories and dualizable $\Abold$-linear functors to the category of circle-equivariant objects in $\Abold$. 

In particular, given an $\Abold$-linear dualizable functor $\Cbold \stackrel{f}\arr \Dbold$, we obtain a circle-equivariant fiber sequence
\begin{equation}\label{relhochhomseq}
\hoch{\Cbold} \arr \hoch{\Dbold} \arr \hoch{\Cbold \stackrel{f}\arr \Dbold},
\end{equation}
where by definition the first arrow is given by the functor \Cref{hochhomfunct} and by definition the cofiber $\hoch{\Cbold \stackrel{f}\arr \Dbold}$ is the {\it relative Hochschild homology}.

\sssec{}

In order to precisely formulate the notion of ``non-commutative calculus'' of $\hochco{\Cbold \arr \Dbold}$ acting on $\hoch{\Cbold \arr \Dbold}$, we shall use the following.

\begin{lem}\label{hochhomof2alg} Let $R$ be a $2$-algebra in $\Abold$. Then $\hoch{R}$ is a circle-equivariant algebra in $\Abold$ and the Hochschild homology over $\Abold$ refines to a functor
\begin{equation}\label{abstcalc}
\hoch{-}: \LinCatt{R}^{\rm dual} \arr \CircMod{\hoch{R}}
\end{equation}
from $R$-linear dualizable categories 
to circle modules over $\hoch{R}$. Moreover, this refinement is natural for maps between $2$-algebras in $\Abold$.
\end{lem}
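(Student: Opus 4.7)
The plan is to leverage the strong symmetric monoidal structure on the Hochschild homology functor \Cref{hochhomfunct}. Since any strong symmetric monoidal functor induces functors on associative algebras and on left modules, the overall strategy is: lift the $2$-algebra $R$ to a monoidal dualizable linear category, apply $\hoch{-}$ to obtain an algebra in $\Abold^{S^{1}}$, identify this algebra with $\hoch{R}$ by a form of Morita invariance, and then observe that $R$-linear dualizable categories are modules over the lifted monoidal category, so that their Hochschild homology becomes a circle-equivariant module over $\hoch{R}$.

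Concretely, I would proceed as follows. By \Cref{2algtomoncat}, a $2$-algebra $R \in \AlgTwo{\Abold}$ corresponds to a monoidal $\Abold$-linear category $\Modd{\Abold}{R}$, which is compactly generated by $R$ (viewed as a module over itself via the inner algebra structure) and hence lies in $\Alg{\LinCat^{\rm dual}}$. Applying \Cref{hochhomfunct} produces an algebra $\hoch{\Modd{\Abold}{R}} \in \Alg{\Abold^{S^{1}}}$. An $R$-linear category is by definition a $\Modd{\Abold}{R}$-module category in $\LinCat$, so any $\Cbold \in \LinCatt{R}^{\rm dual}$ gives rise to a circle-equivariant $\hoch{\Modd{\Abold}{R}}$-module $\hoch{\Cbold}$, functorially in $\Cbold$.

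The next step is to identify $\hoch{\Modd{\Abold}{R}}$ with $\hoch{R}$ as an algebra in $\Abold^{S^{1}}$. I would argue this as Morita invariance of Hochschild homology preserving circle equivariance: both objects compute the trace of the identity endofunctor of the dualizable object attached to $R$, one via the categorical trace in $\LinCat$ and the other via the cyclic bar construction of the plain algebra $R$, with the circle action in either model arising from the same underlying cyclic symmetry of the trace. Naturality in the $2$-algebra then follows formally: a map $R \arr R'$ of $2$-algebras yields a monoidal functor $\Modd{\Abold}{R} \arr \Modd{\Abold}{R'}$, and applying the symmetric monoidal $\hoch{-}$ together with the Morita identification produces the required circle-equivariant algebra map $\hoch{R} \arr \hoch{R'}$, compatibly with restriction of modules.

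The main obstacle I anticipate is the identification $\hoch{\Modd{\Abold}{R}} \simeq \hoch{R}$ simultaneously as algebras, as circle-equivariant objects, and as recipients of the module structure on $\hoch{\Cbold}$. Each piece in isolation is standard, but producing a single coherent equivalence that respects all of them at once and is sufficiently natural in $R$ seems to require either a careful direct comparison of the two cyclic bar models, or an abstract identification of both sides as the trace of the identity on a common dualizable object in a symmetric monoidal $(\infty,2)$-category in which the circle symmetry is manifest from the outset.
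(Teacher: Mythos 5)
Your proposal takes essentially the same route as the paper: pass from the $2$-algebra $R$ to the monoidal dualizable category $\Modd{\Abold}{R}$, use that $\hoch{-}$ is symmetric monoidal on $\LinCat^{\rm dual}$ to turn this algebra object into a circle-equivariant algebra, regard an $R$-linear dualizable category as a $\Modd{\Abold}{R}$-module to obtain a circle-equivariant $\hoch{R}$-module, and deduce naturality from the functoriality of $R \mapsto \Modd{\Abold}{R}$ and of $\hoch{-}$. The one ``obstacle'' you anticipate --- coherently identifying $\hoch{\Modd{\Abold}{R}}$ with a cyclic-bar model of $\hoch{R}$ as algebra, circle-equivariant object, and module recipient --- does not arise in the paper's framework, since there $\hoch{R}$ is by definition the trace of the identity on the dualizable category $\Modd{\Abold}{R}$ (with dual $\Modd{\Abold}{R^{op}}$), i.e.\ $\hoch{R}$ \emph{is} $\hoch{\Modd{\Abold}{R}}$, so no Morita comparison is required.
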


\begin{proof}
Via the functor \Cref{2algtomoncat}, the category $\Modd{\Abold}{R}$ of modules for a $2$-algebra $R$ in $\Abold$ has an induced monoidal structure. Note that $\Modd{\Abold}{R}$ is dualizable over $\Abold$ (with dual $\Modd{\Abold}{R^{op}}$), so $\hoch{R}$ is defined as a circle module in $\Abold$. Since the functor \Cref{hochhomfunct} is symmetric monoidal, the monoidal structure on $\Modd{\Abold}{R}$ induces a circle-equivariant algebra structure on $\hoch{R}$. Moreover, the construction is natural in maps of $2$-algebras via the functoriality of $R \mapsto \Modd{\Abold}{R}$ of \Cref{2algtomoncat} and the functoriality $\Modd{\Abold}{R} \mapsto \hoch{R}$ of \Cref{hochhomfunct}.
\end{proof}

\sssec{}

We define a {\it non-commutative calculus} over $\Abold$ to be a $2$-algebra $R$ together with a circle-module $M$ for the circle-equivariant algebra $\hoch{R}$. For the comparison of this structure to an alternative notion of non-commutative calculus in terms of the Kontsevich-Soibelman operad we refer to \cite{iwanari}. 
We have the following relative version of non-commutative calculus.


\begin{thm}[Relative non-commutative calculus]\label{relcalc}

Let $\Abold$ be a presentably symmetric monoidal category and $\Cbold \stackrel{f}\arr \Dbold$ a dualizable $\Abold$-linear functor. Then the circle-equivariant algebra $\hoch{\hochco{\Cbold \stackrel{f}\arr \Dbold}}$ in $\Abold$ acts on the circle-equivariant cofiber sequence
$$\hoch{\Cbold} \arr \hoch{\Dbold} \arr \hoch{\Cbold \stackrel{f}\arr \Dbold}.$$
In particular, there is a natural non-commutative calculus of relative Hochschild cochains acting on relative Hochschild chains.
\end{thm}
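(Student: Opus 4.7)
The strategy is to apply Lemma \ref{hochhomof2alg} to the universal $2$-algebra $R := \hochco{\Cbold \stackrel{f}\arr \Dbold}$ furnished by the Relative Deligne conjecture, after extending that lemma to arrow categories. By Proposition \ref{reldelconj}, $R$ acts on the arrow $f$ in the sense that $\Modd{\Abold}{R}$ acts on $\Cbold$ and $\Dbold$ and $f$ is $R$-linear for these actions. Since $f$ is dualizable as an $\Abold$-linear functor and the $R$-linear structure is compatible with the underlying $\Abold$-linear structure, $f$ promotes to an object of $\Arr{\LinCatt{R}^{\rm dual}}$, the category whose objects are dualizable $R$-linear functors between $R$-linear dualizable categories.

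Next, I would extend the functor \Cref{abstcalc} to arrow categories. Since \Cref{abstcalc} is a functor between $\infty$-categories, postcomposition yields an induced functor
\[
\hoch{-} : \Arr{\LinCatt{R}^{\rm dual}} \arr \Arr{\CircMod{\hoch{R}}}
\]
sending $f$ to a morphism $\hoch{f} : \hoch{\Cbold} \arr \hoch{\Dbold}$ in $\CircMod{\hoch{R}}$. Because $\CircMod{\hoch{R}}$ is presentable stable, one forms the cofiber $\on{cofib}(\hoch{f})$ inside $\CircMod{\hoch{R}}$. Since the forgetful functor $\CircMod{\hoch{R}} \arr \Abold^{S^{1}}$ preserves colimits, this cofiber, viewed in $\Abold^{S^{1}}$, agrees with the circle-equivariant cofiber of the map $\hoch{\Cbold} \arr \hoch{\Dbold}$ of \Cref{relhochhomseq}, and hence canonically with $\hoch{\Cbold \stackrel{f}\arr \Dbold}$. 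We thus obtain the desired $\hoch{R}$-module enhancement of the whole cofiber sequence \Cref{relhochhomseq}. Finally, pulling back along the universal action $R' \arr R$ for an arbitrary $2$-algebra $R'$ acting on $f$ would give the full naturality assertion.

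The main content to verify is that \Cref{abstcalc} genuinely refines to a functor landing in $\hoch{R}$-modules that is natural in $R$-linear morphisms; the extension to arrows is then purely formal. This boils down to checking that \Cref{hochhomfunct} is symmetric monoidal in the strong sense that, for any algebra object $\Modd{\Abold}{R}$ in $\LinCat^{\rm dual}$, it carries the categories of left modules and module morphisms in $\LinCat^{\rm dual}$ to those of left $\hoch{R}$-modules in $\Abold^{S^{1}}$. I expect the main obstacle to be bookkeeping in this symmetric-monoidal refinement, in particular the careful handling of dualizability conditions throughout, so that the trace construction underlying \Cref{hochhomfunct} remains well-defined on the relevant arrows and preserves the module structure; however all of the substantive input is already contained in the symmetric monoidality statement of \cite[Theorem 2.14]{hoyetal} together with Lemma \ref{hochhomof2alg}.
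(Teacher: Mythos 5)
Your argument is correct and is essentially the paper's own proof: invoke \Cref{reldelconj} to regard $\Cbold \stackrel{f}\arr \Dbold$ as an arrow in $\LinCatt{\hochco{\Cbold \stackrel{f}\arr \Dbold}}^{\rm dual}$, apply the functor \Cref{abstcalc} of \Cref{hochhomof2alg} to get a map of circle-equivariant $\hoch{\hochco{\Cbold \stackrel{f}\arr \Dbold}}$-modules $\hoch{\Cbold} \arr \hoch{\Dbold}$, and take the cofiber, which agrees with $\hoch{\Cbold \stackrel{f}\arr \Dbold}$ since the forgetful functor to $\Abold^{S^{1}}$ preserves colimits. The ``extension to arrow categories'' you flag is nothing more than the functoriality of \Cref{abstcalc} on morphisms, and the symmetric-monoidal refinement you worry about is exactly what \Cref{hochhomof2alg} (via \cite[Theorem 2.14]{hoyetal}) already supplies, so no additional verification is needed beyond what you cite.
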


\begin{proof}

Consider now a dualizable functor between dualizable categories $\Cbold \stackrel{f}\arr \Dbold$ in $\LinCat$ 
By \Cref{reldelconj}, there is a universal $2$-algebra $\hochco{\Cbold \stackrel{f}\arr \Dbold}$ in $\Abold$ acting on the arrow $\Cbold \stackrel{f}\arr \Dbold$. In other words, we have a $2$-algebra $\hochco{\Cbold \stackrel{f}\arr \Dbold}$ and an arrow $\Cbold \stackrel{f}\arr \Dbold$ in $\LinCatt{\hochco{\Cbold \stackrel{f}\arr \Dbold}}^{\rm dual}$, so we may apply the functor \Cref{abstcalc} to obtain a circle-equivariant algebra $\hoch{\hochco{\Cbold \stackrel{f}\arr \Dbold}}$ in $\Abold$ and a map of circle-equivariant $\hoch{\hochco{\Cbold \stackrel{f}\arr \Dbold}}$-modules $\hoch{\Cbold} \arr \hoch{\Dbold}$, hence a cofiber sequence 
\[
\hoch{\Cbold} \arr \hoch{\Dbold} \arr \hoch{\Cbold \stackrel{f}\arr \Dbold}
\]
of circle-equivariant $\hoch{\hochco{\Cbold \stackrel{f}\arr \Dbold}}$-modules.

\end{proof}

\section{Framed $E_{2}$-algebras as equivariant factorization algebras}\label{factalg}

In this section, we use the theory of factorization algebras on stratified spaces of \cite{AFT} to give an algebraic description of framed $E_2$-algebras.

\ssec{Recollections on factorization algebras}

\sssec{}
Let $E_n$ denote the little $n$-disks operad, and let $\CV$ be a cocomplete symmetric monoidal category in which the tensor product commutes with colimits in each variable.  For 
any $n$-manifold $M$, let $\on{Alg}_M(\CV)$ denote the category of factorization algebras on $M$ 
valued in $\CV$, as defined in \cite[Sect. 5.4.5]{lurieHA}.  A framing on $M$ induces a functor
\begin{equation}\label{e:from diskalg to mfld}
\on{Alg}_{E_n}(\CV) \to \on{Alg}_M(\CV),
\end{equation}
where $\on{Alg}_{E_n}(\CV)$ is the category of $E_n$-algebras in $\CV$.

\medskip

In what follows, we will denote by $\on{Alg}(\CV) := \on{Alg}_{E_1}(\CV)$, the category of $E_1$-algebras (aka associative algebras).

\sssec{}
Now, the group $O(n)$ acts on the operad $E_n$.\footnote{In fact, the larger group $\on{Top}(n)$ of homeomorphisms of $\mathbb{R}^n$ acts (see
\cite[Remark 5.4.2.9]{lurieHA}) though unlike the action of $O(n)$ this is not obvious from the definition of $E_n$.}. The 
corresponding action on \Cref{e:from diskalg to mfld} is given by changes of framing of $M$.

In the case that $M = \mathbb{R}^n$,
\Cref{e:from diskalg to mfld} gives an equivalence (\cite[Example 5.4.5.3]{lurieHA})
\begin{equation}\label{e:disk algs as fact algs}
\on{Alg}_{E_n}(\CV) \simeq \on{Alg}_{\mathbb{R}^n}(\CV)
\end{equation}
which is equivariant with respect to the $O(n)$-action (where the action on the right hand side is induced
from the standard action of $O(n)$ on $\mathbb{R}^n$).\footnote{Note that for $\mathbb{R}^n$ changes of
framing are the same as isomorphisms of the ambient space.}

\sssec{Recollections on operads}
In what follows, we'll make use of the theory of (colored) operads in the setting of $\infty$-categories (as developed in \cite[Chapters 2 and 3]{lurieHA}).  We recall the salient features.

Given an operad $\mathcal{O}$, we will denote by $\mathcal{O}^{\otimes}$ the corresponding category over
$\on{Fin}_*$, the category of finite pointed sets.  We will also denote by $\mathcal{O}_{\langle 1 \rangle}$
the corresponding category of objects, i.e. the fiber of $\mathcal{O}^{\otimes}$ over $\langle 1 \rangle \in \on{Fin}_*$ (which is the set consisting of an element and a disjoint basepoint).

Suppose that $f: \mathcal{O} \to \mathcal{P}$ is a map of operads.  We then have the corresponding restriction map
$$ f^*: \on{Alg}_{\mathcal{P}}(\CV) \to \on{Alg}_{\mathcal{O}}(\CV). $$
We summarize its key aspects as follows.
\begin{prop}\label{p:properties of operadic restr}
The functor $f^*$ has the following properties:
\begin{enumerate}
\item\label{i:res pres sifted colims}
It preserves sifted colimits.
\item\label{i:conservativity of res}
If the functor $f_{\langle 1 \rangle}: \mathcal{O}_{\langle 1 \rangle} \to \mathcal{P}_{\langle 1 \rangle}$ is essentially surjective, then $f^*$
is conservative.
\item\label{i:operadic lke}
It admits a left adjoint
$$ f_!: \on{Alg}_{\mathcal{O}}(\CV) \to \on{Alg}_{\mathcal{P}}(\CV) .$$
called the \emph{operadic left Kan extension}.  For an object $x \in \mathcal{P}_{\langle 1 \rangle}$,
and $A \in \on{Alg}_{\mathcal{O}}(\CV)$, we have that
$$ f_!(A)(x) \simeq \on{colim}_{y \in (\mathcal{O}^{\otimes}_{\on{act}})_{/x}} A(y) $$
where $(\mathcal{O}^{\otimes}_{\on{act}})_{/x}$ is the slice category of the corresponding functor $\mathcal{O}^{\otimes}_{\on{act}} \to \mathcal{P}^{\otimes}_{\on{act}}$ on the subcategories consisting of active morphisms\footnote{Recall that a morphism in $\mathcal{O}^{\otimes}$ is active if its image in 
$\on{Fin}_*$ has the property that the only preimage of the basepoint is the basepoint.}, and the colimit is taken in $\CV$.
\item\label{i:operadic lke ff}
If the functor $\mathcal{O}^{\otimes} \to \mathcal{P}^{\otimes}$ is fully faithful,
then $f_!$ is fully faithful.
\end{enumerate}
\end{prop}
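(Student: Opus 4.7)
The plan is to treat each item by reducing to a pointwise statement on the underlying color category $\mathcal{P}_{\langle 1 \rangle}$ and invoking the machinery of $\infty$-operads from \cite[Chapters~2--3]{lurieHA}. For (1), the forgetful functor $\on{Alg}_{\mathcal{P}}(\CV) \to \Fun{\mathcal{P}_{\langle 1 \rangle}}{\CV}$ preserves and detects sifted colimits, since the monoidal product on $\CV$ commutes with colimits in each variable; the same holds for $\mathcal{O}$. Under these forgetful functors, $f^*$ becomes precomposition with $f_{\langle 1 \rangle}$, which preserves all colimits, giving (1). For (2), the same reduction applies: if $f_{\langle 1 \rangle}$ is essentially surjective, then precomposition with it is conservative as a functor between ordinary functor categories $\Fun{-}{\CV}$, and conservativity of the forgetful functor on algebras upgrades this to conservativity of $f^*$.

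For (3), I would appeal to the general existence and formula for operadic left Kan extensions established in \cite[Sect.~3.1.3]{lurieHA}: the adjoint $f_!$ is constructed there under the present cocompleteness hypotheses on $\CV$, and its value on $x \in \mathcal{P}_{\langle 1 \rangle}$ is computed by the pointwise colimit over the active slice $(\mathcal{O}^{\otimes}_{\on{act}})_{/x}$ of the underlying object of $A$. I expect this to be the principal technical step: verifying that the pointwise colimit assembles into a genuine $\mathcal{P}$-algebra, rather than merely a functor of underlying color categories, requires checking compatibility with the operadic Segal conditions on $\mathcal{P}^{\otimes}$, which is precisely what Lurie's operadic Kan extension theorem provides.

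For (4), combining the colimit formula of (3) with the hypothesis that $\mathcal{O}^{\otimes} \to \mathcal{P}^{\otimes}$ is fully faithful, I note that for any $x \in \mathcal{O}_{\langle 1 \rangle}$, viewed in $\mathcal{P}_{\langle 1 \rangle}$ via $f$, the slice $(\mathcal{O}^{\otimes}_{\on{act}})_{/x}$ has the pair $(x, \on{id}_x)$ as a terminal object, since any active morphism in $\mathcal{P}^{\otimes}$ with target $f(x)$ lifts uniquely along the fully faithful embedding. The colimit formula then yields $(f^* f_! A)(x) \simeq A(x)$, so the unit $A \to f^* f_! A$ is an equivalence on underlying colors, hence on algebras by the conservativity established in (2). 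Full faithfulness of $f_!$ then follows from the triangle identity.
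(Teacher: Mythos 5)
Your proposal is correct and follows essentially the same route as the paper, which simply cites \cite[Proposition 3.2.3.1, Lemma 3.2.2.6, Cor.\ 3.1.3.5]{lurieHA} for (1)--(3) and deduces (4) from (3) and (2); your arguments unpack exactly those citations (pointwise reduction to the color category for (1)--(2), Lurie's operadic left Kan extension for (3), and the terminal-object/colimit computation plus conservativity of the forgetful functor for (4)). The only nitpick is the phrasing in (4): it is not that every active morphism in $\mathcal{P}^{\otimes}$ with target $f(x)$ lifts, but that active morphisms $f(y)\to f(x)$ with source in the image lift essentially uniquely (activeness being detected in $\on{Fin}_*$), which is what makes $(x,\on{id})$ terminal in the slice.
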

\begin{proof}
\ref{i:res pres sifted colims} follows from \cite[Proposition 3.2.3.1(4)]{lurieHA}.

\ref{i:conservativity of res} follows from \cite[Lemma 3.2.2.6]{lurieHA}.

\ref{i:operadic lke} is a special case of \cite[Cor. 3.1.3.5]{lurieHA}.

\ref{i:operadic lke ff} follows by \ref{i:operadic lke} and \ref{i:conservativity of res}.

\end{proof}

\sssec{}
For the sequel, we will need a generalization of the theory of factorization algebras to stratified
spaces. Following \cite{AFT}, let $\Snglr$ denote the symmetric monoidal\footnote{Recall that the tensor product is the disjoint union of underlying spaces, but it is \emph{not} the coproduct in $\Snglr$} $\infty$-category of (conically
smooth) stratified spaces and open embeddings, and let $\Bsc \subset \Snglr$ denote the full subcategory
of ``basics'' (see \cite[Sect. 1.1]{AFT}).  Given $X \in \Snglr$, let $\Snglr_X$ and $E_X$ denote operads given by
$$ \Snglr_X^{\otimes}:=(\Snglr_{/X})^{\sqcup} \underset{\Snglr^{\sqcup}}{\times} \Snglr^{\otimes} \mbox{ and } E_X^{\otimes}:=(\Bsc_{/X})^{\sqcup} \underset{\Snglr^{\sqcup}}{\times} \Snglr^{\otimes} $$
respectively, with the evident projection maps to $\on{Fin}_*$ (see \cite[Cor. 1.20]{AFT}).

Now, given $X \in \Snglr$, the category of factorization algebras on $X$ is defined as algebras over the $E_X$ operad:
$$ \on{Alg}_X(\CV) := \on{Alg}_{E_X}(\CV) .$$

\begin{rem}\label{r:def of fact alg}
The definition of $\on{Alg}_{X}(\CV)$ in \cite{AFT} is slightly different.  However, these notions are equivalent and it will be more convenient to work with the above definition.  Namely, in \cite{AFT}, they first consider the full subcategory $\Disk \subset \Snglr$ consisting of finite disjoint unions of basics, and corresponding operad $\Disk_X$ given by
$$ \Disk_X^{\otimes}:=(\Disk_{/X})^{\sqcup} \underset{\Snglr^{\sqcup}}{\times} \Snglr^{\otimes}. $$
They then define factorization algebras on $X$ as a certain full subcategory of $\on{Alg}_{\Disk_X}(\CV)$.  We have the evident map of operads given by $E_X^{\otimes} \to \Disk_X^{\otimes}$, which is fully faithful.  Therefore, by \Cref{p:properties of operadic restr}\ref{i:operadic lke ff}, the operadic left Kan extension functor
$$ \on{Alg}_{E_X}(\CV) \to \on{Alg}_{\Disk_{X}}(\CV) $$
is fully faithful.  Using \Cref{p:properties of operadic restr}\ref{i:operadic lke}, it is evident that the essential image is exactly the subcategory considered in \cite{AFT}.
\end{rem}

\sssec{}
The \emph{factorization homology} functor
$$ \int_{-} : \on{Alg}_{E_X}(\CV) \to \on{Alg}_{\Snglr_X}(\CV) $$
is defined as the operadic left Kan extension along the inclusion of operads $E_X \hookrightarrow \Snglr_X$.
This agrees with \cite[Definition 2.14]{AFT} by \Cref{r:def of fact alg}.

\medskip

Now, given a constructible bundle $f: X \to Y$, \cite[Lemma 2.24]{AFT} defines a map of operads
\begin{equation}\label{e:map of operads pushforward}
f^{-1}: E_Y \to \Snglr_{X}.
\end{equation}
This gives the pushforward, or \emph{relative factorization homology}, functor
$$ f_* : \on{Alg}_X(\CV) \to \on{Alg}_Y(\CV) $$
given as the composite
$$ \on{Alg}_X(\CV) \overset{\int_-}{\to} \on{Alg}_{\Snglr_X}(\CV) \overset{{(f^{-1})^*}}{\to} \on{Alg}_Y(\CV) .$$

\sssec{}
Suppose that $f: X \hookrightarrow Y$ is an open embedding; i.e. it is a morphism in $\Snglr$.  Composition with $f$ gives a map of operads $ E_X \to E_Y $, which we will also denote by $f$, slightly abusing notation.
Thus we have the corresponding restriction functor
$$ f^*: \on{Alg}_Y(\CV) \to \on{Alg}_X(\CV). $$
Now, we have a commutative diagram of operads
$$
\xymatrix{
E_X \ar[r]^f\ar[d] & E_Y \ar[d] \\
\Snglr_X \ar[r]^{\tilde{f}} & \Snglr_Y
}
$$
where the horizontal maps are given by composition with $f$.  Therefore, by adjunction, we have a natural transformation
\begin{equation}\label{e:pre base change}
\int_{-} \circ f^* \to \tilde{f}^* \circ \int_{-}: \on{Alg}_{Y}(\CV) \to \on{Alg}_{\Snglr_X}(\CV).
\end{equation}

\begin{prop}\label{p:pre base change iso}
The natural transformation \Cref{e:pre base change} is an isomorphism.
\end{prop}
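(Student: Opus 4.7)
The plan is to establish \Cref{p:pre base change iso} by reducing it to the pointwise formula for operadic left Kan extension given in \Cref{p:properties of operadic restr}\ref{i:operadic lke}. Since $\int_{-}$ is by construction the operadic left Kan extension along $i_Y : E_Y \hookrightarrow \Snglr_Y$ (and similarly along $i_X$ for $X$), the natural transformation \Cref{e:pre base change} is the Beck--Chevalley transformation associated to the commutative square of operads in the statement. Checking that it is an isomorphism on algebras is therefore a pointwise question: it suffices to verify that its value at an arbitrary $x \in (\Snglr_X)_{\langle 1 \rangle}$ is an equivalence in $\CV$.

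Evaluated at such an $x$, the left-hand side of \Cref{e:pre base change} equals
$$\on{colim}_{y \in (E_X^{\otimes}_{\on{act}})_{/x}} A(f(y)),$$
using that $(f^*A)(y) = A(f(y))$, while the right-hand side equals
$$\on{colim}_{z \in (E_Y^{\otimes}_{\on{act}})_{/\tilde{f}(x)}} A(z),$$
and the comparison map is induced by the functor
$$\Phi : (E_X^{\otimes}_{\on{act}})_{/x} \longrightarrow (E_Y^{\otimes}_{\on{act}})_{/\tilde{f}(x)}$$
that sends a tuple $y = (y_1, \dots, y_n)$ of basics over $X$ equipped with an active map $y \to x$ in $\Snglr_X^{\otimes}$ to the same tuple viewed over $Y$ via $f$, equipped with the composite active map $f(y) \to \tilde{f}(x)$ in $\Snglr_Y^{\otimes}$. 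The whole proof therefore reduces to showing that $\Phi$ is an equivalence of $\infty$-categories.

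The key geometric input, and the only place where the open embedding hypothesis enters essentially, is the following lifting property. Given any open embedding $z \hookrightarrow \tilde{f}(x)$ in $\Snglr_Y$, the composite structure map $z \to \tilde{f}(x) \to Y$ factors uniquely through $f : X \hookrightarrow Y$, since $\tilde{f}(x) \to Y$ already factors through $f$ by definition of $\tilde{f}$ and $f$ is a monomorphism in $\Snglr$. Hence $z$ acquires a canonical structure as an object of $\Snglr_{/X}$, and the open embedding $z \hookrightarrow \tilde{f}(x)$ lifts uniquely to an open embedding $z \hookrightarrow x$ in $\Snglr_X$. Applied componentwise to tuples of basics and to morphisms between them, this construction produces a functor $\Psi$ inverse to $\Phi$.

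The main obstacle is carrying out this lifting coherently at the $\infty$-categorical level rather than merely on objects. However, because the factorization through the monomorphism $f$ is \emph{unique} whenever it exists, the lift is canonical on morphisms of all degrees: once one fixes the factorization on objects, every simplex in the mapping space lifts essentially uniquely, so the two composites $\Phi \circ \Psi$ and $\Psi \circ \Phi$ are canonically identified with the respective identities. Once $\Phi$ is established as an equivalence of index categories, the induced map on colimits is automatically an isomorphism, completing the proof.
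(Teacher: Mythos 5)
Your reduction to the pointwise formula for the operadic left Kan extension (\Cref{p:properties of operadic restr}\ref{i:operadic lke}) is exactly the paper's first step, and the statement you then need -- that the induced functor on slices $((E_X)^{\otimes}_{\on{act}})_{/x} \to ((E_Y)^{\otimes}_{\on{act}})_{/\tilde{f}(x)}$ is an equivalence -- is correct. But the justification you give for it contains a genuine gap: an open embedding $f: X \hookrightarrow Y$ is in general \emph{not} a monomorphism in $\Snglr$. Mapping spaces in $\Snglr$ are spaces of open embeddings, so for $f$ to be a monomorphism the map $\on{Emb}(z,X) \to \on{Emb}(z,Y)$ would need empty-or-contractible homotopy fibers for every basic $z$. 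Take $X = \mathbb{R}^2\setminus\{0\} \hookrightarrow Y = \mathbb{R}^2$ and $z = \mathbb{R}^2$: up to homotopy $\on{Emb}(z,X) \simeq (\mathbb{R}^2\setminus\{0\})\times O(2)$ and $\on{Emb}(z,Y) \simeq \mathbb{R}^2 \times O(2)$, so the homotopy fibers are circles. This is precisely the embedding $S^1\times\mathbb{R}_{>0} \hookrightarrow C(S^1) = \mathbb{R}^2_*$ to which the proposition is applied later in the paper, so the failure is not an exotic edge case. Hence the ``essentially unique lifting of simplices'' on which the coherence of your inverse $\Psi$ (and the identifications $\Phi\Psi \simeq \on{id}$, $\Psi\Phi \simeq \on{id}$) rests is unjustified. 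There is also a mismatch in your description of the objects being lifted: an object of the slice over $\tilde{f}(x)$ consists of basics with structure embeddings into $Y$ together with an embedding into $x$ \emph{and a specified isotopy} in $\on{Emb}(-,Y)$ between the structure map and the composite through $x$; the structure map itself need not have image in $f(X)$, so there is nothing to factor through $f$ on the nose, and your $\Psi$ silently discards this homotopy data.

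The correct mechanism is not any property of $f$ but the slice-of-a-slice identity, which is the paper's one-line argument: for any $Z \in \Snglr_{/X}$ one has $((E_X)^{\otimes}_{\on{act}})_{/Z} \simeq \Disk_{/Z}$, because once the active map to $Z$ is given, the ambient structure maps to $X$ together with the compatibility homotopies form contractible data (a based path space), and the same holds over $Y$. Both slices are therefore canonically identified with $\Disk_{/Z}$, independently of the ambient space, the comparison functor is compatible with these identifications, and the claim follows from \Cref{p:properties of operadic restr}\ref{i:operadic lke}. Your construction of $\Psi$ on objects is secretly this identification; replacing the monomorphism argument by it repairs your proof and recovers the paper's.
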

\begin{proof}
By definition, we have that for any $Z\in \Snglr_{/X}$,
$$ ((E_X)^{\otimes}_{\on{act}})_{/Z} \simeq \Disk_{/Z} $$
and in particular doesn't depend on $X$.  Hence the functor
$$ ((E_X)^{\otimes}_{\on{act}})_{/Z} \to ((E_Y)^{\otimes}_{\on{act}})_{/Z} $$
is an equivalence, and the desired assertion follows by \Cref{p:properties of operadic restr}\ref{i:operadic lke}.
\end{proof}

\sssec{}
Now, suppose we have a commutative square of stratified spaces
$$ \xymatrix{
\tilde{X} \ar[r]^{g}\ar[d]_-{\overset{\circ}{p}} & \tilde{Y}\ar[d]^p\\
X \ar[r]^f & Y
}$$
where the vertical maps are constructible fibrations, the horizontal maps are open embeddings and $\tilde{X} = p^{-1}(X)$.  By construction of the map \Cref{e:map of operads pushforward} in \cite[Lemma 2.24]{AFT}, we have a commutative square of operads
$$
\xymatrix{
E_X \ar[r] \ar[d] & E_Y \ar[d] \\
\Snglr_X \ar[r] & \Snglr_Y
}
$$
Thus, by \Cref{p:pre base change iso} and the definition of relative factorization homology, we obtain the following base change formula:
\begin{prop}\label{p:base change}
In the situation above, there is a natural isomorphism
$$ \overset{\circ}{p}_* \circ g^* \simeq f^* \circ p_* : \on{Alg}_{\tilde{Y}}(\CV) \to \on{Alg}_{X}(\CV).$$
\end{prop}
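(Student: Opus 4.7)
The plan is to unwind the definitions of the two pushforwards $p_{*}$ and $\overset{\circ}{p}_{*}$, reducing the claimed base change to an equivalence at the level of restriction along operad maps, and then invoke \Cref{p:pre base change iso} together with the commutative square of operads stated just above. By construction, $p_{*}=(p^{-1})^{*}\circ\int_{-}$ and $\overset{\circ}{p}_{*}=((\overset{\circ}{p})^{-1})^{*}\circ\int_{-}$, so the problem is to produce a natural equivalence
$$ ((\overset{\circ}{p})^{-1})^{*}\circ\int_{-}\circ\, g^{*}\;\simeq\; f^{*}\circ (p^{-1})^{*}\circ\int_{-} $$
of functors $\on{Alg}_{\tilde Y}(\CV)\to\on{Alg}_{X}(\CV)$.

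The first key step is to apply \Cref{p:pre base change iso} to the open embedding $g:\tilde{X}\hookrightarrow\tilde{Y}$, producing a canonical isomorphism $\int_{-}\circ\, g^{*}\simeq\tilde g^{*}\circ\int_{-}$, where $\tilde g:\Snglr_{\tilde X}\to\Snglr_{\tilde Y}$ is the operad map induced by $g$. Substituting this on the left-hand side above, it suffices to establish an equivalence
$$ ((\overset{\circ}{p})^{-1})^{*}\circ \tilde g^{*}\;\simeq\; f^{*}\circ (p^{-1})^{*} $$
of restriction functors $\on{Alg}_{\Snglr_{\tilde Y}}(\CV)\to\on{Alg}_{X}(\CV)$. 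By the 2-functoriality of $\on{Alg}_{(-)}(\CV)$ in maps of operads, this reduces to exhibiting the commutativity, as a square of operads, of
$$ \xymatrix{
E_{X} \ar[r]^{f} \ar[d]_{(\overset{\circ}{p})^{-1}} & E_{Y} \ar[d]^{p^{-1}} \\
\Snglr_{\tilde X} \ar[r]^{\tilde g} & \Snglr_{\tilde Y}
} $$
which is the square asserted in the statement (``by construction of \Cref{e:map of operads pushforward} in [AFT, Lemma 2.24]'').

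The only substantive verification, and what I expect to be the main obstacle, is that this last square is coherently commutative. On objects this is essentially the set-theoretic identity $g(\overset{\circ}{p}^{-1}(U))=p^{-1}(f(U))$ for a basic $U\hookrightarrow X$, which holds because $\tilde X = p^{-1}(X)$ and $f$ is an open embedding; unwinding further, an active multi-morphism in $E_{X}$ given by a disjoint family of basics in $X$ maps, along both sides, to the same open stratified subspace of $\tilde Y$ together with the same over-$\tilde Y$ structure, and the higher coherences come from the naturality of the assignment $p\mapsto p^{-1}$ in [AFT]. Granting this coherent commutativity, the composition of the two natural equivalences above yields the base change isomorphism $\overset{\circ}{p}_{*}\circ g^{*}\simeq f^{*}\circ p_{*}$, completing the proof.
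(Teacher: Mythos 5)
Your argument is correct and is essentially the paper's own proof: the paper likewise unwinds $p_*$ and $\overset{\circ}{p}_*$ via the definition of relative factorization homology as $(\text{--}^{-1})^*\circ\int_-$, applies \Cref{p:pre base change iso} to the open embedding $g$, and appeals to the commutative square of operads relating $f$, $g$, $p^{-1}$, $\overset{\circ}{p}{}^{-1}$ coming from the construction of \Cref{e:map of operads pushforward} in \cite[Lemma 2.24]{AFT}. The only difference is that you spell out the object-level check of that commutativity, which the paper leaves implicit.
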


\ssec{Factorization algebras on a cone}

\sssec{}
Let $X$ be a stratified manifold (in the sense of \cite{AFT}), and let $C(X)$ denote the cone on $X$.  We will give a convenient description of the category of factorization algebras on $C(X)$.

\medskip

By definition of $C(X)$, we have the commutative square
$$ \xymatrix{
X \times \mathbb{R}_{>0} \ar[r]^{j_X}\ar[d]_{\overset{\circ}{\pi}} & C(X) \ar[d]^{\pi} \\
\mathbb{R}_{>0} \ar[r]^-{j_{\on{pt}}} & \mathbb{R}_{\geq 0}=C(\on{pt})
},$$
where $\pi$ is the functor $C(-)$ applied to the map $X \to \on{pt}$; i.e., it is the projection to the cone 
coordinate.  Note that the horizontal maps are open embeddings and the vertical maps are constructible 
bundles.

By \Cref{p:base change}, there is a natural isomorphism of functors
\begin{equation}\label{e:base change}
 \overset{\circ}{\pi}_* \circ j_X^* \simeq j_{\on{pt}}^* \circ \pi_* : \on{Alg}_{C(X)}(\CV) \to \on{Alg}_{\mathbb{R}_{>0}}(\CV).
\end{equation}
Thus, we we obtain a functor
\begin{equation}\label{e:fiber product cone}
\on{Alg}_{C(X)} \to \on{Alg}_{X \times \mathbb{R}_{> 0}}(\CV) \underset{\on{Alg}_{\mathbb{R}_{>0}}(\CV)}{\times}  \on{Alg}_{\mathbb{R}_{\geq 0}}(\CV)
\end{equation}
given by $\pi_*$, $j_X^*$, and \Cref{e:base change}.
We now state the main theorem of this subsection:

\begin{thm}\label{t:algebras on cone}
The functor \Cref{e:fiber product cone} is an equivalence of categories.
\end{thm}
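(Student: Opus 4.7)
The plan is to construct an inverse to the functor \Cref{e:fiber product cone} by gluing along the stratification of $C(X)$. The key structural observation is that $C(X)$ has exactly two strata --- the open locus $X \times \mathbb{R}_{>0}$ and the cone point --- so every basic of $C(X)$ is either contained in $X \times \mathbb{R}_{>0}$ (and is then a basic of that stratified space), or it contains the cone point. In the latter case, by the local structure of cones, such a basic is isomorphic to $\pi^{-1}([0,r)) = C(X)|_{[0,r)}$ for some $r > 0$; that is, it is pulled back via $\pi$ from a basic of $\mathbb{R}_{\geq 0}$ containing the origin. Together with the operadic formalism of \Cref{p:properties of operadic restr} and the base change of \Cref{p:base change}, this dichotomy of basics will drive the equivalence.

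Given a triple $(A, B, \alpha)$ in the target fiber product, with $A \in \on{Alg}_{X \times \mathbb{R}_{>0}}(\CV)$, $B \in \on{Alg}_{\mathbb{R}_{\geq 0}}(\CV)$, and $\alpha \colon \overset{\circ}{\pi}_* A \simeq j_{\on{pt}}^* B$, I would define a candidate factorization algebra $F = F(A, B, \alpha)$ on $C(X)$ by setting its value on a basic $U \subset X \times \mathbb{R}_{>0}$ to be $A(U)$, and its value on a basic of $C(X)$ of the form $\pi^{-1}(I)$ for a basic $I \subset \mathbb{R}_{\geq 0}$ containing $0$ to be $B(I)$. The operadic structure maps of $F$ among basics of either family alone are inherited from $A$ or $B$ respectively. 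The mixed structure maps --- corresponding to open embeddings of several open basics $U_1, \ldots, U_k$ into a cone-point basic $V = \pi^{-1}(I)$ --- are assembled by combining the structure of $A$ on the $U_i$ to produce a map landing in $(\overset{\circ}{\pi}_* A)(I \cap \mathbb{R}_{>0})$, transporting through $\alpha$ to $B(I \cap \mathbb{R}_{>0})$, and then applying the structure maps of $B$ to land in $B(I) = F(V)$.

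That this assignment assembles into a genuine $E_{C(X)}$-algebra follows from the operadic base change of \Cref{p:pre base change iso}, which ensures that the compatibility encoded by $\alpha$ is all that is needed to make the mixed multicompositions coherent. The conservativity of the functor \Cref{e:fiber product cone} then follows from \Cref{p:properties of operadic restr}\ref{i:conservativity of res}: the union of the two families of basics described above is essentially surjective onto the colors of $E_{C(X)}$, so no information about a factorization algebra on $C(X)$ is lost by recording $(\pi_*, j_X^*)$. Combined with the explicit construction of $F$, which furnishes essential surjectivity and, by construction, identifies the round-trip composites with the identities, this yields the desired equivalence. The main obstacle I anticipate is verifying that $F$ is coherent as an $\infty$-operadic algebra rather than merely as a $1$-categorical assignment: one must upgrade the operadic base change of \Cref{p:pre base change iso} to apply uniformly across all of the mixed multi-arrows of $E_{C(X)}$ needed to exhibit $F$ as an $E_{C(X)}$-algebra, which amounts to a bookkeeping exercise with slice categories of active morphisms in the stratified framework of \cite{AFT}.
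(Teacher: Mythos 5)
Your proposed inverse is the right intuition for what the equivalence does on objects, but as written it is not a proof: in the $\infty$-categorical setting there is no mechanism for producing an object of $\on{Alg}_{C(X)}(\CV)=\on{Alg}_{E_{C(X)}}(\CV)$ from an assignment of values on the two families of basics together with individually specified (mixed) structure maps. An $E_{C(X)}$-algebra is a map of $\infty$-operads $E_{C(X)}^{\otimes}\to \CV^{\otimes}$, i.e.\ an infinite hierarchy of coherently compatible homotopies over all multi-composites, and the step you defer as ``a bookkeeping exercise'' is precisely the entire content of the theorem. Moreover, the tools you invoke cannot supply that coherence: \Cref{p:pre base change iso} says that factorization homology commutes with restriction along open embeddings, and \Cref{p:properties of operadic restr} gives conservativity and operadic left Kan extensions along maps of operads; neither upgrades a color-wise assignment $F(A,B,\alpha)$ into a coherent algebra, nor produces the functoriality of $F$ in $(A,B,\alpha)$, nor the identifications of the round-trip composites with identities (which you assert hold ``by construction'' but would themselves require further coherence constructions). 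Your conservativity remark is essentially correct (it amounts to the fact that the basics of $X\times\mathbb{R}_{>0}$ together with the cone-point basic exhaust the colors of $E_{C(X)}$, and that $\pi_*$ evaluated on a basic of $\mathbb{R}_{\geq 0}$ containing $0$ recovers the value on the corresponding cone-point basic), but conservativity plus an unconstructed inverse does not yield an equivalence.

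The paper's proof is structured exactly to avoid hand-building such coherence: it exhibits both sides as monadic over the common base $\on{Alg}_{X\times\mathbb{R}_{>0}}(\CV)\times\CV$ --- the left side via restriction $\Phi^*$ along the map of operads $\Phi: E_{X\times\mathbb{R}_{>0}}\boxplus\on{Triv}\to E_{C(X)}$ (with $\on{Triv}$ selecting the cone-point basic), the right side via the projection-and-$\int_{\mathbb{R}_{\geq 0}}$ functor $\Psi$ --- and then compares the two monads. The comparison reduces to two computations: the analogue of your ``basics in the open stratum see only $A$'' observation, made precise as the statement that the operadic left Kan extension $\Phi_!$ does not change values on basics of $X\times\mathbb{R}_{>0}$ (\Cref{l:partial ff extension}, a slice-category argument with active morphisms), and a cofinality argument identifying the value of $\Phi_!$ at the cone point with the pushforward to $\mathbb{R}_{\geq 0}$, which is where your ``map into $\overset{\circ}{\pi}_*A$, transport through $\alpha$, then apply $B$'' heuristic is actually implemented rigorously. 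If you want to pursue your gluing picture, you would need to recast it in such a form --- e.g.\ via Barr--Beck--Lurie or a localization/cofinality argument --- rather than by directly specifying the algebra on basics.
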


\sssec{}\label{sss:pointed modules}
In what follows, let $\on{AlgMod}_*(\CV)$ denote the category of pairs $(A,M)$, where $A \in \on{Alg}(\CV)$ is an associative algebra, and $M \in A\on{-mod}_{A/}$ is a \emph{pointed} module.

\medskip

As in \cite[Sect. 2.6]{AFT}, we have an equivalence
$$ \on{AlgMod}_*(\CV) \simeq \on{Alg}_{\mathbb{R}_{\geq 0}}(\CV). $$
We thus have:

\begin{cor}
The restriction functor
$$ \on{Alg}_{C(X)}(\CV) \to \on{Alg}_{X \times \mathbb{R}_{> 0}} (\CV)$$
is a Cartesian fibration with the fiber over $A \in \on{Alg}_{X \times \mathbb{R}_{\geq 0}}(\CV)$ equivalent to the category of pointed modules over $\int_{X \times \mathbb{R}_{>0}} A$.
\end{cor}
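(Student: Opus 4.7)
The plan is to deduce the corollary from \Cref{t:algebras on cone} together with the identification of $\on{Alg}_{\mathbb{R}_{\geq 0}}(\CV)$ with the category $\on{AlgMod}_*(\CV)$ of pairs $(A,M)$ with $M$ a pointed $A$-module, recalled in \Cref{sss:pointed modules}. By the theorem, we replace $\on{Alg}_{C(X)}(\CV)$ with the fiber product
\[
\on{Alg}_{X \times \mathbb{R}_{>0}}(\CV) \underset{\on{Alg}_{\mathbb{R}_{>0}}(\CV)}{\times} \on{Alg}_{\mathbb{R}_{\geq 0}}(\CV),
\]
where the right-hand factor projects via $j_{\on{pt}}^*$ and the left-hand factor projects via $\overset{\circ}{\pi}_*$. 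Under this equivalence the restriction functor in the statement becomes the first projection, so it suffices to establish that this projection is a Cartesian fibration with the claimed fibers.

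The plan is then to prove that the second projection from the fiber product is itself a pullback of a Cartesian fibration. Specifically, I would show that the functor $\on{Alg}_{\mathbb{R}_{\geq 0}}(\CV) \to \on{Alg}_{\mathbb{R}_{>0}}(\CV)$ is a Cartesian fibration: via the equivalences $\on{Alg}_{\mathbb{R}_{\geq 0}}(\CV) \simeq \on{AlgMod}_*(\CV)$ and $\on{Alg}_{\mathbb{R}_{>0}}(\CV) \simeq \on{Alg}(\CV)$, this functor identifies with the forgetful map $\on{AlgMod}_*(\CV) \to \on{Alg}(\CV)$, sending $(A,M)$ to $A$. Cartesian arrows over a map of algebras $A \to A'$ are given by restriction of scalars of pointed modules, which manifestly exhibits this forgetful functor as a Cartesian fibration whose fiber over $A$ is the category $A\text{-}{\rm mod}_{A/}$ of pointed $A$-modules. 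Since Cartesian fibrations are stable under pullback, the first projection from our fiber product is a Cartesian fibration.

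It remains to identify the fiber over a given $A \in \on{Alg}_{X \times \mathbb{R}_{>0}}(\CV)$. By construction, this fiber is the fiber of $\on{Alg}_{\mathbb{R}_{\geq 0}}(\CV) \to \on{Alg}_{\mathbb{R}_{>0}}(\CV)$ over the pushforward $\overset{\circ}{\pi}_*(A)$. Now the $E_1$-algebra in $\CV$ corresponding to $\overset{\circ}{\pi}_*(A)$ under $\on{Alg}_{\mathbb{R}_{>0}}(\CV) \simeq \on{Alg}(\CV)$ is, by definition of pushforward as operadic left Kan extension and by \Cref{p:properties of operadic restr}\ref{i:operadic lke}, precisely the factorization homology $\int_{X \times \mathbb{R}_{>0}} A$ equipped with its $E_1$-structure from the $\mathbb{R}_{>0}$-coordinate. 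Under the equivalence with $\on{AlgMod}_*(\CV)$, the fiber is therefore pointed modules over $\int_{X \times \mathbb{R}_{>0}} A$, as asserted.

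The only nontrivial point I anticipate is tracking the algebra structure through the equivalence of \Cref{sss:pointed modules}, i.e.\ verifying that $\overset{\circ}{\pi}_*(A)$ really does correspond to $\int_{X \times \mathbb{R}_{>0}} A$ with its natural $E_1$-structure rather than some other structure; this is a direct unwinding of the operadic left Kan extension formula \Cref{p:properties of operadic restr}\ref{i:operadic lke} applied to the map of operads $\pi^{-1} : E_{\mathbb{R}_{>0}} \to \Snglr_{X \times \mathbb{R}_{>0}}$, combined with the base change \Cref{p:base change}. Everything else is a formal consequence of Cartesian fibrations being closed under pullback.
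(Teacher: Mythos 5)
Your proposal is correct and is essentially the paper's own (very terse) argument: the paper likewise deduces the corollary directly from \Cref{t:algebras on cone} together with the identification $\on{Alg}_{\mathbb{R}_{\geq 0}}(\CV) \simeq \on{AlgMod}_*(\CV)$, with the projection of the fiber product being a pullback of the forgetful functor to $\on{Alg}(\CV)$ and the structure map $\overset{\circ}{\pi}_*$ computing $\int_{X \times \mathbb{R}_{>0}} A$ by the pushforward formula. You have merely made explicit the routine points (Cartesianness of the pointed-module forgetful functor, stability of Cartesian fibrations under pullback) that the paper leaves implicit.
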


\begin{rem}
In fact, one can show that there is an equivalence of categories
$$ \on{Alg}_{X \times \mathbb{R}_{>0}}(\CV) \simeq \on{Alg}_{E_1}(\on{Alg}_X(\CV)).$$
Thus, this gives a purely algebraic description of $\on{Alg}_{C(X)}(\CV)$ in terms of
$\on{Alg}_{X}(\CV)$.
\end{rem}

\sssec{Proof of \Cref{t:algebras on cone}, Step 1}
The rest of this subsection is devoted to the proof of \Cref{t:algebras on cone}.  The strategy will be to relate both categories to a third one.

\medskip

Let $\on{Triv}$ be the trivial operad (see \cite[Examples 2.1.1.20 and 2.1.3.5]{lurieHA}), and let
\begin{equation}\label{e:cone point}
 \on{Triv} \to E_{C(X)}
\end{equation}
be the map which selects the object $C(X) \in \Bsc_{/X} \simeq (E_{C(X)})_{\langle 1 \rangle}$.
Taking the coproduct of \Cref{e:cone point} with the map $j_X: E_{X \times \mathbb{R}_{>0}} \to E_{C(X)}$, we obtain a map of operads
$$ \Phi: E_{X \times \mathbb{R}_{>0}} \boxplus \on{Triv} \to E_{C(X)} ,$$
where $E_{X \times \mathbb{R}_{>0}} \boxplus \on{Triv}$ is the coproduct operad.  By \cite[Theorem 2.2.3.6]{lurieHA}, it is given by
$$  (E_{X \times \mathbb{R}_{>0}} \boxplus \on{Triv})^{\otimes} \simeq  E_{X \times \mathbb{R}_{>0}}^{\otimes} \times \on{Triv}^{\otimes} \to \on{Fin}_* \times \on{Fin}_* \overset{\vee}{\to} \on{Fin}_* .$$

\medskip
Now, we have the restriction functor
$$ \Phi^*: \on{Alg}_{C(X)}(\CV) \to \on{Alg}_{E_{X\times \mathbb{R}_{>0}} \boxplus \on{Triv}}(\CV) \simeq \on{Alg}_{X\times \mathbb{R}_{>0}}(\CV) \times \CV .$$
By \Cref{p:properties of operadic restr}, it is conservative, preserves sifted colimits, and admits a left adjoint $\Phi_!$.  Therefore, it is monadic by Lurie's Barr-Beck theorem.  We will study the corresponding monad.

\begin{lem}\label{l:partial ff extension}
Let $p: \on{Alg}_{X \times \mathbb{R}_{>0}}(\CV) \times \CV \to \on{Alg}_{X \times \mathbb{R}_{>0}}(\CV)$ be the projection functor.
The composite
$$
p\circ (\on{id}_{\on{Alg}_{X \times \mathbb{R}_{>0}}(\CV) \times \CV} \overset{\eta}{\to} \Phi^* \circ \Phi_!)
$$
is an isomorphism of functors $\on{Alg}_{X \times \mathbb{R}_{>0}}(\CV) \times \CV \to \on{Alg}_{X \times \mathbb{R}_{>0}}(\CV)$, where $\eta$ is the unit of the adjunction.
\end{lem}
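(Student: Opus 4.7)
The plan is to invoke the explicit colimit formula for the operadic left Kan extension $\Phi_!$ from \Cref{p:properties of operadic restr}\ref{i:operadic lke} and to analyze the resulting slice category over a basic $x$ lying in $X \times \mathbb{R}_{>0} \subset C(X)$. Under the identification $\on{Alg}_{E_{X \times \mathbb{R}_{>0}} \boxplus \on{Triv}}(\CV) \simeq \on{Alg}_{X \times \mathbb{R}_{>0}}(\CV) \times \CV$, the composite $p \circ \Phi^*$ is simply the restriction along the inclusion of operads $j_X : E_{X \times \mathbb{R}_{>0}} \hookrightarrow E_{C(X)}$, so it will suffice to show that for each basic $x \in \Bsc_{/X \times \mathbb{R}_{>0}}$ the component of the unit of the adjunction is an isomorphism $A(x) \xrightarrow{\sim} \Phi_!(A,V)(j_X(x))$.

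By \Cref{p:properties of operadic restr}\ref{i:operadic lke}, one has
\[
\Phi_!(A,V)(j_X(x)) \simeq \on{colim}_{y \in ((E_{X\times \mathbb{R}_{>0}} \boxplus \on{Triv})^{\otimes}_{\on{act}})_{/j_X(x)}} (A,V)(y).
\]
An object of the indexing slice unpacks to an active multi-morphism $((y_1,\ldots,y_n),(z_1,\ldots,z_m)) \to j_X(x)$ in $E_{C(X)}^{\otimes}$, with $y_i$ basics in $X \times \mathbb{R}_{>0}$ and each $z_j$ a copy of the cone basic $C(X)$ selected by the map $\on{Triv} \to E_{C(X)}$. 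I would argue that no copies of $C(X)$ can actually appear: such data would provide a conically smooth open embedding $\bigsqcup_i y_i \sqcup \bigsqcup_j C(X) \hookrightarrow j_X(x) \subset X \times \mathbb{R}_{>0}$, but the cone point of $C(X)$ is a singular zero-dimensional stratum with no counterpart in $j_X(x)$, and conically smooth embeddings preserve strata. Hence $m=0$, and the slice collapses to $((E_{X \times \mathbb{R}_{>0}})^{\otimes}_{\on{act}})_{/x}$ (the independence of this slice on the ambient space is precisely the observation used in the proof of \Cref{p:pre base change iso}).

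In the reduced slice, the identity $\on{id}_x : x \to x$ is a terminal object, since every active morphism $(y_1,\ldots,y_n) \to x$ factors uniquely through it when viewed as a single $n$-ary operation composed with the $1$-ary identity. The colimit is therefore computed at this terminal object and equals $A(x)$; tracing through definitions, the component at $x$ of $p \circ \eta_{(A,V)}$ is exactly this identification. The main obstacle is the stratum-preservation argument excluding copies of the cone basic, which relies on the structure theory of conically smooth open embeddings from \cite[Sect.~1.1]{AFT}; once that is granted, the rest is a formal consequence of the colimit formula for operadic left Kan extension.
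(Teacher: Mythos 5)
Your proposal is correct and follows essentially the same route as the paper: both invoke the colimit formula of \Cref{p:properties of operadic restr}\ref{i:operadic lke} and observe that no object involving the cone basic $C(X)$ admits an active map to a basic $j_X(x) \subset X \times \mathbb{R}_{>0}$ (the paper states this as $\on{Maps}_{E_{C(X)}^{\otimes}}(a,j(d))=\emptyset$ for $a \notin \on{Im}(j)$, which you justify geometrically via stratum preservation), so the indexing slice reduces to the one computing $A(x)$ itself.
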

\begin{proof}
Let $\iota: E_{X \times \mathbb{R}_{>0}} \to E_{X \times \mathbb{R}_{>0}} \boxplus \on{Triv}$ denote the canonical map. Via the equivalence
$$  \on{Alg}_{E_{X\times \mathbb{R}_{>0}} \boxplus \on{Triv}}(\CV) \simeq \on{Alg}_{X\times \mathbb{R}_{>0}}(\CV) \times \CV, $$
the projection functor $p$ identifies with $\iota^*$.  Moreover, by definition of $\Phi$,
$$ \Phi \circ \iota \simeq j:= j_X.$$

\medskip

Now, for $d \in \Bsc_{/(X \times \mathbb{R}_{>0})}$, we have a commutative diagram
\begin{equation}\label{e:diagram of slices for ff}
\xymatrix{
((E_{X \times \mathbb{R}_{>0}})^{\otimes}_{\on{act}})_{/d} \ar[r] \ar[d] &
((E_{X \times \mathbb{R}_{>0}})^{\otimes}_{\on{act}})_{/j(d)} \ar[d] \\
((E_{X \times \mathbb{R}_{>0}} \boxplus \on{Triv})^{\otimes}_{\on{act}})_{/\iota(d)} \ar[r] &
((E_{X \times \mathbb{R}_{>0}} \boxplus \on{Triv})^{\otimes}_{\on{act}})_{/j(d)}
}
\end{equation}
Note that the functor $j: E_{X \times \mathbb{R}_{>0}}^{\otimes} \to E_{C(X)}^{\otimes}$ is fully faithful.  It follows that the top horizontal functor in \Cref{e:diagram of slices for ff} is an equivalence.

Now, for any object $a \in E_{C(X)}^{\otimes}$ such that $a\notin \on{Im}(j)$, we have that
$$ \on{Maps}_{E_{C(X)}^{\otimes}}(a, j(d)) = \emptyset .$$
It follows that the right vertical functor in \Cref{e:diagram of slices for ff} is an equivalence.  By the same logic, the left vertical functor in \Cref{e:diagram of slices for ff} is an equivalence as well.  Hence, the bottom horizontal functor
$$ ((E_{X \times \mathbb{R}_{>0}} \boxplus \on{Triv})^{\otimes}_{\on{act}})_{/\iota(d)} \to
((E_{X \times \mathbb{R}_{>0}} \boxplus \on{Triv})^{\otimes}_{\on{act}})_{/j(d)}$$
is also an equivalence.  Thus, by \Cref{p:properties of operadic restr}\ref{i:operadic lke}, we have that for any $A \in \on{Alg}_{X \times \mathbb{R}_{>0}}(\CV) \times \CV$, the natural map
$$ A(d) \to \Phi^* \Phi_!(A)(d)$$
is an isomorphism, as desired.
\end{proof}

\sssec{Step 2}
We will now show that the right hand side of \Cref{e:fiber product cone} is also monadic over $\on{Alg}_{X\times \mathbb{R}_{>0}}(\CV) \times \CV$.

Consider the functor
\begin{equation}\label{e:v factor on rhs}
\on{Alg}_{X \times \mathbb{R}_{> 0}}(\CV) \underset{\on{Alg}_{\mathbb{R}_{>0}}(\CV)}{\times}  \on{Alg}_{\mathbb{R}_{\geq 0}}(\CV) \to \on{Alg}_{\mathbb{R}_{\geq 0}}(\CV) \overset{\int_{R_{\geq 0}}}{\to} \CV,
\end{equation}
where the first functor is the projection.
Now, let
$$ \Psi: \on{Alg}_{X \times \mathbb{R}_{> 0}}(\CV) \underset{\on{Alg}_{\mathbb{R}_{>0}}(\CV)}{\times}  \on{Alg}_{\mathbb{R}_{\geq 0}}(\CV) \to \on{Alg}_{X\times \mathbb{R}_{>0}}(\CV) \times \CV $$
denote the product of the projection and \Cref{e:v factor on rhs}.  By \Cref{p:properties of operadic restr}, $\Psi$ is conservative and preserves sifted colimits.

Now, for $(A,v) \in \on{Alg}_{X\times \mathbb{R}_{>0}}(\CV) \times \CV$ by \Cref{l:partial ff extension} (applied to $X = \on{pt}$), we have an isomorphism
$$ j_{\on{pt}}^* ((\Phi_{\on{pt}})_!(\overset{\circ}{\pi}_*(A), v)) \simeq \overset{\circ}{\pi}_*(A) ,$$
where
$$ \Phi_{\on{pt}}: E_{\mathbb{R}_{>0}} \boxplus \on{Triv} \to E_{C(\on{pt})} \simeq E_{\mathbb{R}_{\geq 0}} $$
is the corresponding map of operads (given by $j_{\on{pt}}$ and $\mathbb{R}_{\geq 0} \in \Bsc_{/\mathbb{R}_{\geq 0}}$).  We obtain that $\Psi$ admits a left adjoint given by
\begin{equation}\label{e:left adjoint of psi}
\Psi^L: (A,v) \mapsto (A, (\Phi_{\on{pt}})_!(\overset{\circ}{\pi}_*(A), v)).
\end{equation}
Hence, $\Psi$ is monadic.

\sssec{Step 3}
By definition, restriction along \Cref{e:cone point} is the functor
$$ \int_{C(X)}: \on{Alg}_{C(X)}(\CV) \to \CV .$$
By \cite[Theorem 2.25]{AFT}, it factors as
$$ \on{Alg}_{C(X)}(\CV) \overset{\pi_*}{\to} \on{Alg}_{\mathbb{R}_{\geq 0}}(\CV) \overset{\int_{\mathbb{R}_{\geq 0}}}{\to} \CV. $$
Hence, we have a commutative diagram
$$ \xymatrix{
\on{Alg}_{C(X)}(\CV) \ar[rr]^-{\Cref{e:fiber product cone}} \ar[dr]_-{\Phi^*} & & \on{Alg}_{X \times \mathbb{R}_{> 0}}(\CV) \underset{\on{Alg}_{\mathbb{R}_{>0}}(\CV)}{\times} \on{Alg}_{\mathbb{R}_{\geq 0}}(\CV) \ar[dl]^-{\Psi} \\
& \on{Alg}_{X \times \mathbb{R}_{> 0}}(\CV) \times \CV
}$$
Since both $\Phi^*$ and $\Psi$ are monadic, we obtain a map of monads
\begin{equation}\label{e:map of monads}
\Psi \circ \Psi^L \to \Phi^* \circ \Phi_!.
\end{equation}
To show that \Cref{e:fiber product cone} is an equivalence, it suffices to show that \Cref{e:map of monads}
is an isomorphism of underlying endo-functors.  By \Cref{l:partial ff extension}, we have that
$$ p \circ (\on{id}_{\on{Alg}_{X \times \mathbb{R}_{>0}}(\CV) \times \CV} \overset{\eta}{\to} \Phi^* \circ \Phi_!) $$
is an isomorphism, and by \Cref{e:left adjoint of psi}, we have that
$$ p \circ (\on{id}_{\on{Alg}_{X \times \mathbb{R}_{>0}}(\CV) \times \CV} \overset{\eta}{\to} \Psi \circ \Psi^L) $$
is also an isomorphism.  It follows that $p\circ \Cref{e:map of monads}$ is an isomorphism.

\sssec{Step 4: Conclusion}
It remains to show $q \circ \Cref{e:map of monads}$ is an isomorphism, where
$$ q: \on{Alg}_{X \times \mathbb{R}_{> 0}}(\CV) \times \CV \to \CV $$
is the projection.

\medskip

Consider the diagram of operads
$$
\xymatrix{
E_{X \times \mathbb{R}_{>0}} \boxplus \on{Triv} \ar[r]^-{\beta}  & \Snglr_{X \times \mathbb{R}_{>0}} \boxplus \on{Triv} \ar[r]^-{\tilde{\Phi}} & \Snglr_{C(X)} \\
 & E_{\mathbb{R}_{>0}} \boxplus \on{Triv} \ar[u]^{\overset{\circ}{\pi}{}^{-1}}\ar[r]^-{\Phi_{\on{pt}}} & E_{\mathbb{R}_{\geq 0}}\ar[u]_{\pi^{-1}}
}
$$
Unraveling the definitions, we have that $q \circ \Cref{e:map of monads}$ is given by the composite
$$ \int_{\mathbb{R}_{\geq 0}} \circ \ 
\left((\Phi_{\on{pt}})_! \circ (\overset{\circ}{\pi}{}^{-1})^* \to (\pi^{-1})^* \circ \tilde{\Phi}_! \right)
\circ (\beta)_* ,$$
where the natural transformation is given by adjunction.  By \Cref{p:properties of operadic restr}\ref{i:operadic lke}, it suffices to show that the functor
\begin{equation}
\label{e:to show cofinal}
((E_{\mathbb{R}_{>0}} \boxplus \on{Triv})^{\otimes}_{\on{act}})_{/\mathbb{R}_{\geq 0}} \overset{\overset{\circ}{\pi}{}^{-1}}{\to}
((\Snglr_{X \times \mathbb{R}_{>0}} \boxplus \on{Triv})^{\otimes}_{\on{act}})_{/C(X)}
\end{equation}
is cofinal.  Now, $((E_{\mathbb{R}_{>0}} \boxplus \on{Triv})^{\otimes}_{\on{act}})_{/\mathbb{R}_{\geq 0}}$
has a cofinal subcategory with two objects and only identity morphisms given by $\{ [\mathbb{R}_{>0}], [\mathbb{R}_{>0}, t]\}$,
where $t \in \on{Triv}_{\langle 1 \rangle}$ is the unique object.  Moreover,
$$ \overset{\circ}{\pi}{}^{-1} (\{ [\mathbb{R}_{>0}], [\mathbb{R}_{>0}, t]\}) = \{ [X \times \mathbb{R}_{>0}], [X\times \mathbb{R}_{>0}, t]\} $$
is also a cofinal subcategory.  Hence, \Cref{e:to show cofinal} is cofinal as desired.
\qed

\ssec{Framed $E_2$-algebras}
In this subsection, we will give a more algebraic description of the category of algebras over the framed $E_2$-operad.

\sssec{}
Consider the natural map $BSO(2) \to BTop(2)$. Let $E_{SO(2)}$ be the corresponding operad as in
\cite[Definition 5.4.2.10]{lurieHA}. By \cite[Example 5.4.2.16]{lurieHA}, this is the \emph{framed $E_2$-operad}
$fE_2$, as studied in \cite{SW}.  By \cite[Remarks 5.4.2.13 and 2.3.3.4]{lurieHA}, we have that $E_{SO(2)}$ is the (homotopy) orbits of the natural $SO(2)$-action on the operad $E_2$.  It follows that we have a natural equivalence
\begin{equation}
\on{Alg}_{fE_2}(\CV) \simeq \on{Alg}_{E_2}(\CV)^{SO(2)},
\end{equation}
where the right hand side is the category of (homotopy) fixed points for the induced $SO(2)$-action.

\medskip

Before stating the main result of this section, we recall the relation between Hochschild homology and factorization homology, and the additivity theorem for factorization algebras.

\sssec{Hochschild homology}\label{sss:hochschild}
By \cite[Remark 5.4.5.2]{lurieHA}, we have an equivalence
\begin{equation}\label{e:so(2) on circle}
\on{Alg}_{S^1}(\CV)^{SO(2)} \simeq \on{Alg}_{E_1}(\CV)
\end{equation}
and by \cite[Theorem 5.5.3.11]{lurieHA}, the composite functor
$$ \mathbf{HH}_{*}: \on{Alg}_{E_1}(\CV) \simeq \on{Alg}_{S^1}(\CV)^{SO(2)} \overset{\int_{S^1}}{\longrightarrow} \CV^{SO(2)} $$
is given by Hochschild homology with a natural $SO(2)$-action (see \cite[Sect. 3.2]{AMR} for a combinatorial description of this circle action).

\sssec{Additivity}
Suppose that we have manifolds $M$ and $N$.  We have a bifunctor of operads (in the sense of \cite[Definition 2.2.5.3]{lurieHA})
$$E_M^{\otimes} \times E_N^{\otimes} \to E_{M\times N}^{\otimes} $$
given by product.  By \cite[Remark 5.4.2.14]{lurieHA}, this induces an equivalence of categories
\begin{equation}\label{e:additivity}
\on{Alg}_{M \times N}(\CV) \overset{\sim}{\longrightarrow} \on{Alg}_M(\on{Alg}_N(\CV)) .
\end{equation}

\sssec{}
We will be interested in how the isomorphism \Cref{e:additivity} interacts with factorization homology. We have a commutative diagram
$$
\xymatrix{
E_M^{\otimes} \times E_N^{\otimes} \ar[r]\ar[d] & E_{M \times N}^{\otimes} \ar[d] \\
E_M^{\otimes} \times \Snglr_N^{\otimes} \ar[r] & \Snglr_{M\times N}^{\otimes}
}
$$
where the vertical functors are inclusions and the horizontal functors are given by product of manifolds.  The vertical functors are bifunctors of operads in the sense of \cite[Definition 2.2.5.3]{lurieHA}.  This gives a commutative diagram
\begin{equation}\label{e:additivity fact}
\xymatrix{
\on{Alg}_{\Snglr_{M\times N}}(\CV) \ar[r]\ar[d] & \on{Alg}_{M}(\on{Alg}_{\Snglr_N}(\CV)) \ar[d]\\
\on{Alg}_{M\times N}(\CV)\ar[r] & \on{Alg}_M(\on{Alg}_N(\CV))
}
\end{equation}

\medskip
By \cite[Theorem 5.5.3.2]{lurieHA}, the operadic left Kan extension
\begin{equation}\label{e:fact homology symmetric monoidal}
\on{Alg}_N(\CV) \to \on{Alg}_{\Snglr_N}(\CV)
\end{equation}
is a symmetric monoidal functor.  It follows that the left adjoint to the restriction functor $\on{Alg}_{M}(\on{Alg}_{\Snglr_N}(\CV)) \to \on{Alg}_M(\on{Alg}_N(\CV))$ is given by
$$ \on{Alg}_M(\Cref{e:fact homology symmetric monoidal}):  \on{Alg}_M(\on{Alg}_N(\CV)) \to \on{Alg}_{M}(\on{Alg}_{\Snglr_N}(\CV)) .$$

\begin{prop}\label{p:additivity bc}
The diagram
$$ \xymatrix{
\on{Alg}_{M\times N}(\CV)\ar[r]\ar[d] & \on{Alg}_M(\on{Alg}_N(\CV)) \ar[d]^{\on{Alg}(\Cref{e:fact homology symmetric monoidal})} \\
\on{Alg}_{\Snglr_{M\times N}}(\CV) \ar[r] & \on{Alg}_{M}(\on{Alg}_{\Snglr_N}(\CV))
} $$
obtained from \Cref{e:additivity fact} by passing to left adjoints of the vertical functors naturally commutes.
\end{prop}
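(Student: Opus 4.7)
The proposition asserts that the Beck--Chevalley (mate) natural transformation between the two composite left adjoints obtained from the commuting square \Cref{e:additivity fact} of right adjoints is an isomorphism. Both vertical right adjoints are restrictions along inclusions of operads, so by \Cref{p:properties of operadic restr}\ref{i:operadic lke} their left adjoints exist and are operadic left Kan extensions computable by the colimit formula on slice categories of active arrows. The plan is to produce a comparison functor between the relevant slice categories and show it is cofinal (in fact essentially an equivalence).

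Using the symmetric monoidality of the functor \Cref{e:fact homology symmetric monoidal} together with additivity equivalences of the form $\on{Alg}_M(\on{Alg}_N(\CV))\simeq \on{Alg}_{E_M\times E_N}(\CV)$ and its counterpart with $\Snglr_N$ in place of $E_N$, I first identify the right-hand vertical left adjoint (obtained by applying $\on{Alg}_M$ to \Cref{e:fact homology symmetric monoidal}) with the operadic left Kan extension along the product-operad inclusion $E_M^{\otimes}\times E_N^{\otimes}\to E_M^{\otimes}\times \Snglr_N^{\otimes}$. The left-hand vertical left adjoint is factorization homology $\int_{M\times N}$, i.e., operadic left Kan extension along $E_{M\times N}^{\otimes}\hookrightarrow \Snglr_{M\times N}^{\otimes}$. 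Evaluating both composites on an algebra $A\in \on{Alg}_{M\times N}(\CV)$ at an object $x$ of the target operad then yields two colimits of $A$ over slice categories of active arrows into $x$ and into the image of $x$ respectively, and the commuting square of operads preceding \Cref{e:additivity fact} supplies a natural comparison functor between these slice categories.

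Finally, following the strategy of the proofs of \Cref{p:pre base change iso} and \Cref{t:algebras on cone}, I show this comparison functor of slice categories is cofinal. The key point is that in each of the four operads $E_{M\times N}^{\otimes}$, $\Snglr_{M\times N}^{\otimes}$, $E_M^{\otimes}\times E_N^{\otimes}$, $E_M^{\otimes}\times\Snglr_N^{\otimes}$, the active slice category over $x$ admits a cofinal subcategory of $\Disk$-type that depends only on $x$ and not on the ambient operad. The main obstacle will be the bookkeeping for the product operads: one must show that an active arrow into a product object $(x_M,x_N)\in E_M^{\otimes}\times\Snglr_N^{\otimes}$ decomposes into a product of active arrows on each factor, and match such products with active arrows into the corresponding object of $\Snglr_{M\times N}^{\otimes}$ under Dunn--Lurie additivity. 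Once this cofinality is in place, the two colimit formulas agree and the Beck--Chevalley mate is an isomorphism, establishing the claim.
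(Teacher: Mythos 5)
Your overall framing (show the Beck--Chevalley mate of \Cref{e:additivity fact} is an isomorphism by comparing colimit formulas for the two left adjoints) is reasonable, but the plan has a genuine gap at its central step, and it also misidentifies the right-hand vertical left adjoint in a way that matters. First, $\on{Alg}_M(\on{Alg}_{\Snglr_N}(\CV))$ is not the category of algebras over the product $E_M^{\otimes}\times \Snglr_N^{\otimes}$ regarded as a single operad; it is algebras for a bifunctor (equivalently, for a tensor product of operads), and the colimit formula of \Cref{p:properties of operadic restr}\ref{i:operadic lke} is not directly available in that setting, because the active slice categories of a tensor product operad are not products of active slice categories. The cleaner observation, which the paper uses, is that since \Cref{e:fact homology symmetric monoidal} is symmetric monoidal, the left adjoint $\on{Alg}_M(\Cref{e:fact homology symmetric monoidal})$ is computed \emph{pointwise}: evaluation at each disk $D\in (E_M)_{\langle 1\rangle}$ intertwines it with $\int_N$. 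Second, and more seriously, the cofinality you invoke is not the same-space comparison underlying \Cref{p:pre base change iso} (where the slice $((E_X)^{\otimes}_{\on{act}})_{/Z}\simeq \Disk_{/Z}$ is independent of the ambient operad); here you must compare disks of product type $D\times U$ with \emph{all} disks in $D\times Z$, i.e.\ slice categories over different spaces. That comparison is not routine bookkeeping and does not follow from quoting additivity \Cref{e:additivity}, which is an equivalence of algebra categories proved by other means and does not identify the active slices entering the operadic Kan extension formula; what you would actually be proving at that point is essentially the pushforward/base-change theorem for factorization homology itself (the content of \cite[Lemma 2.24, Theorem 2.25]{AFT}, packaged in \Cref{p:pre base change iso} and \Cref{p:base change}). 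As written, the "main obstacle" you flag is precisely the unproved heart of the argument.

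By contrast, the paper's proof sidesteps any new cofinality claim: using the pointwise description of $\on{Alg}_M(\Cref{e:fact homology symmetric monoidal})$, the evaluations $\int_D$ over all disks $D\subset M$ are jointly conservative and commute with the vertical functors, so the assertion reduces, for each fixed $D$, to the commutation of restriction and pushforward already established in \Cref{p:pre base change iso} and \Cref{p:base change}. If you want to salvage your route, you should either prove the needed cofinality of product-type disks directly (which amounts to reproving the AFT pushforward theorem in this case) or reorganize the argument so that the only inputs are the already-available base-change statements, as the paper does.
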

\begin{proof}
For every disk embedded in $M$, $D \in (E_M)_{\langle 1 \rangle}$, we have a commutative diagram
$$ \xymatrix{
\on{Alg}_M(\on{Alg}_N(\CV)) \ar[d]_{\on{Alg}_M(\Cref{e:fact homology symmetric monoidal})}\ar[r]^-{\int_D} & \on{Alg}_N(\CV)\ar[d]^{\Cref{e:fact homology symmetric monoidal}} \\
 \on{Alg}_{M}(\on{Alg}_{\Snglr_N}(\CV)) \ar[r]^-{\int_D} & \on{Alg}_{\Snglr_N}(\CV)
}. $$
The horizontal functors $\int_D$ are jointly conservative over all $D \in (E_M)_{\langle 1 \rangle}$.  Therefore, it suffices to show that the corresponding diagram
$$ \xymatrix{
\on{Alg}_{M\times N}(\CV)\ar[r]^-{\int_D }\ar[d] & \on{Alg}_N(\CV) \ar[d] \\
\on{Alg}_{\Snglr_{M\times N}}(\CV) \ar[r]^-{\int_D } & \on{Alg}_{\Snglr_N}(\CV)
} $$
commutes for every $D \in (E_M)_{\langle 1 \rangle}$.  This follows from \Cref{p:pre base change iso} and \Cref{p:base change}.
\end{proof}

\sssec{}
By the definition of relative factorization homology, we obtain:

\begin{cor}\label{c:additivity and fubini}
The composite functor
$$
\xymatrix{
\on{Alg}_{M \times N}(\CV) \ar[r]^-{\sim} & \on{Alg}_{M}(\on{Alg}_N(\CV)) \ar[rr]^-{\on{Alg}_M(\int_N)} & & \on{Alg}_M(\CV)
}$$
is canonically isomorphic to the functor of relative factorization homology along the projection $M \times N \to M$.
\end{cor}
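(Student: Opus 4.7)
The plan is to unfold the definition of relative factorization homology $\pi_{*}$ along the projection $\pi \colon M \times N \to M$ and then reduce to Proposition \ref{p:additivity bc}.

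First, by definition $\pi_{*} = (\pi^{-1})^{*} \circ \int_{-}$, where the map of operads $\pi^{-1} \colon E_{M} \to \Snglr_{M\times N}$ sends a disk $D \in (E_{M})_{\langle 1 \rangle}$ to $D \times N \in \Snglr_{M\times N}$. This operad map factors through the product bifunctor $E_{M}^{\otimes} \times \Snglr_{N}^{\otimes} \to \Snglr_{M\times N}^{\otimes}$ appearing in the $\Snglr$-analogue of additivity, by selecting the single object $N \in (\Snglr_{N})_{\langle 1 \rangle}$ in the second factor. Under the resulting equivalence $\on{Alg}_{\Snglr_{M\times N}}(\CV) \simeq \on{Alg}_{M}(\on{Alg}_{\Snglr_{N}}(\CV))$, the restriction $(\pi^{-1})^{*}$ is thereby identified with $\on{Alg}_{M}(\on{ev}_{N})$, where $\on{ev}_{N} \colon \on{Alg}_{\Snglr_{N}}(\CV) \to \CV$ denotes evaluation at $N \in \Snglr_{N}$.

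Next, the plan is to apply Proposition \ref{p:additivity bc} to identify the composite
$$\on{Alg}_{M\times N}(\CV) \xrightarrow{\int_{-}} \on{Alg}_{\Snglr_{M\times N}}(\CV) \xrightarrow{\sim} \on{Alg}_{M}(\on{Alg}_{\Snglr_{N}}(\CV))$$
with the additivity equivalence \eqref{e:additivity}, $\on{Alg}_{M\times N}(\CV) \simeq \on{Alg}_{M}(\on{Alg}_{N}(\CV))$, followed by $\on{Alg}_{M}$ applied to the symmetric monoidal functor \eqref{e:fact homology symmetric monoidal}, namely $\on{Alg}_{N}(\CV) \to \on{Alg}_{\Snglr_{N}}(\CV)$. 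Post-composing both sides with $\on{Alg}_{M}(\on{ev}_{N})$ and using the tautological factorization $\int_{N} \simeq \on{ev}_{N} \circ (E_{N} \hookrightarrow \Snglr_{N})_{!}$, which is immediate from Proposition \ref{p:properties of operadic restr}\ref{i:operadic lke}, one obtains the identification of $\pi_{*}$ with $\on{Alg}_{M}(\int_{N})$ precomposed with additivity, as desired.

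The substantive content of the argument has already been provided by Proposition \ref{p:additivity bc}. The only remaining step, and hence the principal (but minor) obstacle, is the identification of $(\pi^{-1})^{*}$ with $\on{Alg}_{M}(\on{ev}_{N})$ through the additivity equivalence; this is a matter of tracing the compatibility between the product bifunctor $E_{M} \times \Snglr_{N} \to \Snglr_{M\times N}$ that defines additivity and the construction of the operad map $\pi^{-1}$ from \cite[Lemma 2.24]{AFT}, and is essentially bookkeeping.
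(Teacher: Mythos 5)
Your proposal is correct and follows essentially the same route as the paper, which deduces the corollary immediately from \Cref{p:additivity bc} together with the definition of relative factorization homology along the projection; the identification of $(\pi^{-1})^*$ with $\on{Alg}_M(\on{ev}_N)$ after restriction along the product bifunctor is exactly the implicit bookkeeping in the paper's ``by the definition of relative factorization homology''. One minor caution: the functor $\on{Alg}_{\Snglr_{M\times N}}(\CV) \to \on{Alg}_{M}(\on{Alg}_{\Snglr_N}(\CV))$ is not shown (and is not needed) to be an equivalence---your argument only uses it as the restriction functor appearing in \Cref{e:additivity fact}, so this overstatement is harmless.
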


\sssec{}
We are now ready to state the main theorem of this section.

\begin{thm}\label{t:framed E-2}
There is a fully faithful embedding
\begin{equation}\label{e:framed E-2 embedding}
\on{Alg}_{fE_2}(\CV) \hookrightarrow \on{Alg}_{E_2}(\CV) \underset{\on{Alg}(\CV^{SO(2)})}{\times} \on{AlgMod}_*(\CV^{SO(2)}),
\end{equation}
where the structure map $\on{Alg}_{E_2}(\CV) \to \on{Alg}(\CV^{SO(2)})$ is given by the composite
$$ \on{Alg}_{E_2}(\CV) \simeq \on{Alg}(\on{Alg})(\CV) \overset{\on{Alg}(\mathbf{HH}_{*})}{\longrightarrow} \on{Alg}(\CV^{SO(2)}) .$$
Moreover, the essential image of \Cref{e:framed E-2 embedding} is given by objects
$$ (A \in \on{Alg}_{E_2}(\CV), M \in \on{Mod}_{\hoch{A}}(\CV^{SO(2)})_{\hoch{A}/}) $$
such that the composite
$$ A \to \hoch{A} \to M $$
is an isomorphism.
\end{thm}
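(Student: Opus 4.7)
The strategy is to apply \Cref{t:algebras on cone} with $X = S^1$, take $SO(2)$-fixed points for the rotation action, and compare $SO(2)$-equivariant factorization algebras on the stratified cone $C(S^1)$ with framed $E_2$-algebras, realized as $SO(2)$-equivariant factorization algebras on the smooth $\mathbb{R}^2$.

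Applying \Cref{t:algebras on cone} and using that $SO(2)$-fixed points commute with fiber products yields
$$\on{Alg}_{C(S^1)}(\CV)^{SO(2)} \simeq \on{Alg}_{S^1 \times \mathbb{R}_{>0}}(\CV)^{SO(2)} \underset{\on{Alg}_{\mathbb{R}_{>0}}(\CV)^{SO(2)}}{\times} \on{Alg}_{\mathbb{R}_{\geq 0}}(\CV)^{SO(2)}.$$
The additivity equivalence \Cref{e:additivity} combined with \Cref{e:so(2) on circle} identifies the leftmost factor with $\on{Alg}_{E_1}(\on{Alg}_{E_1}(\CV)) \simeq \on{Alg}_{E_2}(\CV)$, while the trivial $SO(2)$-action on the radial factors together with \Cref{sss:pointed modules} identifies the other two factors with $\on{Alg}(\CV^{SO(2)})$ and $\on{AlgMod}_*(\CV^{SO(2)})$. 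The structure map from $\on{Alg}_{E_2}(\CV)$, arising from relative factorization homology along $S^1 \times \mathbb{R}_{>0} \to \mathbb{R}_{>0}$, is identified via \Cref{c:additivity and fubini} and \Cref{sss:hochschild} with $\on{Alg}(\mathbf{HH}_*)$, matching the structure map in the statement.

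The fully faithful embedding $\on{Alg}_{fE_2}(\CV) \simeq \on{Alg}_{\mathbb{R}^2}(\CV)^{SO(2)} \hookrightarrow \on{Alg}_{C(S^1)}(\CV)^{SO(2)}$ is obtained by extending an $SO(2)$-equivariant factorization algebra from smooth $\mathbb{R}^2$ to the stratified cone: both have the same restriction to the smooth locus $S^1 \times \mathbb{R}_{>0}$, and the cone-point value is declared to be $A$ itself (viewed as an $SO(2)$-equivariant object via the framed structure), equipped with the natural augmentation $\hoch{A} \to A$ (induced by the inclusion $S^1 \hookrightarrow \mathbb{R}^2$ together with the contractibility of $\mathbb{R}^2$) as its pointing. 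Full faithfulness follows since morphisms of such extensions unpack precisely into $SO(2)$-equivariant $E_2$-algebra maps, that is, morphisms of framed $E_2$-algebras.

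Finally, the essential image is characterized by requiring the pointed $\hoch{A}$-module $(M, m)$ to agree with the canonical one $(A, \on{aug})$. Since the composite $A \xrightarrow{\eta} \hoch{A} \xrightarrow{\on{aug}} A$ equals $\on{id}_A$ (both maps come from the successive inclusions $\on{pt} \hookrightarrow S^1 \hookrightarrow \mathbb{R}^2$, which compose to the inclusion of a point), requiring $m \circ \eta : A \to M$ to be an isomorphism forces $M \simeq A$ and pins down $m$ as the augmentation. The main technical obstacle lies precisely here: rigorously showing that the isomorphism condition exactly characterizes those factorization algebras on $C(S^1)$ which extend smoothly across the cone point, which requires interpreting the operadic left Kan extension from \Cref{t:algebras on cone} producing the cone-point datum as a ``freeness'' condition on the pointed $\hoch{A}$-module and ruling out any alternative choice.
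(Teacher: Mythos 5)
Your decomposition half is the paper's own argument: apply \Cref{t:algebras on cone} to $X=S^1$, pass to $SO(2)$-fixed points, and use the additivity equivalence \Cref{e:additivity}, \Cref{e:so(2) on circle}, \Cref{c:additivity and fubini} and \Cref{sss:hochschild} to identify $\on{Alg}_{\mathbb{R}^2-\{0\}}(\CV)^{SO(2)}$ with $\on{Alg}_{E_2}(\CV)$ and the structure map $\on{Nm}_*$ with $\on{Alg}(\mathbf{HH}_*)$. That part is fine. The gap is in the other half: the comparison between factorization algebras on the smooth plane $\mathbb{R}^2$ and on the stratified cone $\mathbb{R}^2_*\simeq C(S^1)$. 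You build the embedding ``by hand,'' declaring the cone-point value to be $A$ with the map $\hoch{A}\to A$ as pointing, and then assert full faithfulness because ``morphisms of such extensions unpack into framed $E_2$-maps.'' This is not an argument: a factorization algebra on $C(S^1)$ is not just a restriction to the smooth locus plus a pointed module given objectwise; you must produce all the coherences, and you must show that maps of the extended objects are no more and no less than maps of the originals. The paper sidesteps both issues by defining the embedding as operadic restriction $\on{Alg}_{\mathbb{R}^2}(\CV)\to\on{Alg}_{\mathbb{R}^2_*}(\CV)$ and invoking \cite[Lemma 2.23]{AFT}, which says precisely that this restriction is fully faithful with essential image those stratified algebras $A$ for which $A(\mathbb{R}^2)\to A(\mathbb{R}^2_*)$ is an isomorphism for disks embedded in $\mathbb{R}^2-\{0\}$. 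Without this input (or a substitute for it) neither full faithfulness nor the essential image statement is established.

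The essential image is where your argument actually breaks, and you flag it yourself. Knowing that the composite $A\to\hoch{A}\to M$ is an equivalence only identifies the \emph{underlying object} of $M$ with $A$; it does not by itself ``pin down $m$ as the augmentation,'' because $M$ carries an $SO(2)$-equivariant $\hoch{A}$-module structure together with a pointing, and the claim to be proved is that all of this structure is canonically the one restricted from an unstratified factorization algebra on $\mathbb{R}^2$ (equivalently, that the stratified algebra extends, essentially uniquely, across the cone point). That is exactly the content of the AFT lemma on refining stratifications, combined with translating its disk-to-cone-point condition through the equivalence \Cref{e:decomp of cone on S-1} into the condition on $A\to\hoch{A}\to M$. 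Your proposed route of reinterpreting the operadic left Kan extension as a ``freeness'' condition and ``ruling out any alternative choice'' is left entirely unexecuted, so the characterization of the image --- which is the part of the theorem actually used in \Cref{relcycdel} --- remains unproved in your write-up.
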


\sssec{}
The remainder of this subsection is devoted to the proof of \Cref{t:framed E-2}.

Let $\mathbb{R}^2_* \simeq C(S^1)$ denote the stratified manifold given by the plane with a zero dimensional stratum given by the origin.  By \cite[Lemma 2.23]{AFT} the restriction functor
\begin{equation}\label{e:add a stratification}
\on{Alg}_{E_2}(\CV) \simeq \on{Alg}_{\mathbb{R}^2}(\CV) \to \on{Alg}_{\mathbb{R}^2_*}(\CV)
\end{equation}
is fully faithful, with essential image given by $A \in \on{Alg}_{\mathbb{R}^2_*}(\CV)$ such that for any open embedding $\mathbb{R}^2 \hookrightarrow \mathbb{R}^2 - \{0\}$, the corresponding map
$$ A(\mathbb{R}^2) \to A(\mathbb{R}^2_*) $$
is an isomorphism.

\sssec{}
By \Cref{t:algebras on cone} and \Cref{sss:pointed modules}, we have an equivalence
\begin{equation}\label{e:decomp of cone on S-1}
\on{Alg}_{\mathbb{R}^2_*}(\CV) \simeq \on{Alg}_{\mathbb{R}^2-\{0\}}(\CV) \underset{\on{Alg}(\CV)}{\times} \on{AlgMod}_*(\CV)
\end{equation}
where the structure map $\on{Alg}_{\mathbb{R}^2-\{0\}}(\CV) \to \on{Alg}(\CV)$ is given by relative factorization homology along the norm map
$$ \on{Nm}: \mathbb{R}^2-\{0\} \to \mathbb{R}_{>0} .$$
Moreover, the essential image of the composite $\Cref{e:decomp of cone on S-1} \circ\Cref{e:add a stratification}$ is given by objects
$$ (A \in \on{Alg}_{\mathbb{R}^2-\{0\}}(\CV), M \in \on{Nm}_*(A)\on{-mod}_{\on{Nm}_*(A)/}) $$
such that for any disk $D \subset \mathbb{R}^2-\{0\}$, the composite
$$ \int_D A \to \int_{\mathbb{R}^2-\{0\}} A \simeq \on{Nm}_*(A) \to M $$
is an isomorphism.

\sssec{}
Both \Cref{e:decomp of cone on S-1} and \Cref{e:add a stratification} are clearly $SO(2)$-equivariant.  It follows that we have a fully faithful embedding
$$ \on{Alg}_{fE_2}(\CV) \simeq \on{Alg}_{E_2}(\CV)^{SO(2)} \hookrightarrow \on{Alg}_{\mathbb{R}^2-\{0\}}(\CV)^{SO(2)} \underset{\on{Alg}(\CV^{SO(2)})}{\times} \on{AlgMod}_*(\CV^{SO(2)}) .$$
The desired result now follows from the following identification:

\begin{prop}
There is an equivalence
$$ \on{Alg}_{\mathbb{R}^2-\{0\}}(\CV)^{SO(2)} \simeq \on{Alg}_{E_2}(\CV) $$
and the composite functor
$$ \on{Alg}_{E_2}(\CV) \simeq \on{Alg}_{\mathbb{R}^2-\{0\}}(\CV)^{SO(2)} \overset{\on{Nm}_*}{\longrightarrow} \on{Alg}_{\mathbb{R}_{>0}}(\CV)^{SO(2)} \simeq \on{Alg}_{E_1}(\CV^{SO(2)}) $$
is isomorphic to
$$ \on{Alg}_{E_2}(\CV) \simeq \on{Alg}_{E_1}(\on{Alg}_{E_1}(\CV)) \overset{\on{Alg}_{E_1}(\mathbf{HH}_{*})}{\to} \on{Alg}_{E_1}(\CV^{SO(2)}) .$$
\end{prop}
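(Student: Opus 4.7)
The plan is to reduce the statement to the additivity theorem \Cref{e:additivity}, the $SO(2)$-equivariant identification \Cref{e:so(2) on circle}, and the Fubini-type \Cref{c:additivity and fubini}, using the fundamental observation that the stratified manifold $\mathbb{R}^{2}-\{0\}$ is $SO(2)$-equivariantly isomorphic to $S^{1}\times\mathbb{R}_{>0}$, where $SO(2)$ rotates the $S^{1}$-factor and acts trivially on $\mathbb{R}_{>0}$, and under this identification the norm map $\on{Nm}\colon \mathbb{R}^{2}-\{0\}\to\mathbb{R}_{>0}$ becomes the projection onto the second factor.

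First I would apply the additivity equivalence \Cref{e:additivity} to $M=S^{1}$ and $N=\mathbb{R}_{>0}$ to obtain
$$\on{Alg}_{\mathbb{R}^{2}-\{0\}}(\CV)\simeq\on{Alg}_{S^{1}\times\mathbb{R}_{>0}}(\CV)\simeq\on{Alg}_{S^{1}}(\on{Alg}_{\mathbb{R}_{>0}}(\CV))\simeq\on{Alg}_{S^{1}}(\on{Alg}_{E_{1}}(\CV)),$$
the last identification being the standard one for factorization algebras on a framed $1$-manifold. Since the $SO(2)$-action lives purely on the $S^{1}$-factor, taking homotopy fixed points commutes with additivity in the outer slot, and applying \Cref{e:so(2) on circle} to the target category $\on{Alg}_{E_{1}}(\CV)$ yields
$$\on{Alg}_{\mathbb{R}^{2}-\{0\}}(\CV)^{SO(2)}\simeq\on{Alg}_{S^{1}}(\on{Alg}_{E_{1}}(\CV))^{SO(2)}\simeq\on{Alg}_{E_{1}}(\on{Alg}_{E_{1}}(\CV))\simeq\on{Alg}_{E_{2}}(\CV),$$
where the final step is Dunn additivity.

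For the compatibility with $\on{Nm}_{*}$, I would invoke \Cref{c:additivity and fubini}, which identifies relative factorization homology along the projection $S^{1}\times\mathbb{R}_{>0}\to\mathbb{R}_{>0}$ with the functor $\on{Alg}_{\mathbb{R}_{>0}}(\int_{S^{1}})$ under the additivity equivalence. Passing to $SO(2)$-fixed points and using the definition of $\mathbf{HH}_{*}$ from \Cref{sss:hochschild}, namely that it arises precisely as the composite $\on{Alg}_{E_{1}}(\CV)\simeq\on{Alg}_{S^{1}}(\CV)^{SO(2)}\overset{\int_{S^{1}}}{\to}\CV^{SO(2)}$, identifies the resulting functor on $\on{Alg}_{E_{1}}(\on{Alg}_{E_{1}}(\CV))$ with $\on{Alg}_{E_{1}}(\mathbf{HH}_{*})$, as required.

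The main obstacle will be verifying that every equivalence in sight is genuinely $SO(2)$-equivariant in a consistent way: the additivity theorem must be applied so that the rotation action on $\mathbb{R}^{2}-\{0\}$ matches the induced action on $\on{Alg}_{S^{1}}(-)$ coming from rotation of $S^{1}$, and the $SO(2)$-action on the Hochschild homology produced by $\int_{S^{1}}$ must be the one of \Cref{sss:hochschild} that yields the circle action on $\mathbf{HH}_{*}$. In practice, this amounts to checking that the various bifunctors of operads used to set up additivity and the functoriality of the norm map are equivariant for the evident group actions on the underlying stratified spaces, which should be transparent from the constructions of \cite{AFT} and \cite{lurieHA} already invoked above.
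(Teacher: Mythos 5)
Your route is the paper's route: polar coordinates $\mathbb{R}^2-\{0\}\simeq \mathbb{R}_{>0}\times S^1$, the additivity equivalence \Cref{e:additivity}, the identification \Cref{e:so(2) on circle}, and then \Cref{c:additivity and fubini} together with the definition of $\mathbf{HH}_*$ in \Cref{sss:hochschild} to handle $\on{Nm}_*$. The one substantive issue is an internal mismatch in how you nest the two factors. For the fixed-point computation you take $S^1$ as the \emph{outer} factor, obtaining $\on{Alg}_{S^1}(\on{Alg}_{\mathbb{R}_{>0}}(\CV))^{SO(2)}\simeq \on{Alg}_{E_1}(\on{Alg}_{E_1}(\CV))$ by applying \Cref{e:so(2) on circle} with coefficients $\on{Alg}_{E_1}(\CV)$; but the Fubini statement \Cref{c:additivity and fubini} identifies pushforward along the projection to $\mathbb{R}_{>0}$ with $\on{Alg}_{\mathbb{R}_{>0}}(\int_{S^1})$ only under the \emph{other} nesting, $\on{Alg}_{\mathbb{R}_{>0}}(\on{Alg}_{S^1}(\CV))$. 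As written, your two identifications of $\on{Alg}_{\mathbb{R}^2-\{0\}}(\CV)^{SO(2)}$ with $\on{Alg}_{E_1}(\on{Alg}_{E_1}(\CV))$ are therefore not literally the same equivalence, and to splice them you would need the swap-compatibility of additivity (and a compatibility of $\int_{S^1}$ with passing to inner $E_1$-algebras), which you do not address. The fix is simply to put $\mathbb{R}_{>0}$ in the outer slot throughout, as the paper does: then the $SO(2)$-action is on the coefficient category, so the fixed points pass inside $\on{Alg}_{\mathbb{R}_{>0}}(-)$, \Cref{e:so(2) on circle} is applied with coefficients $\CV$ itself, and \Cref{c:additivity and fubini} applies verbatim, so that $\on{Nm}_*$ becomes $\on{Alg}_{\mathbb{R}_{>0}}(\int_{S^1})$ and, after taking fixed points, $\on{Alg}_{E_1}(\mathbf{HH}_*)$. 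With that adjustment your argument coincides with the paper's proof.
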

\begin{proof}
We have, using polar coordinates, $\mathbb{R}^2 - \{0\} \simeq \mathbb{R}_{>0} \times S^1$ and the $SO(2)$-action is on the second factor.  Thus, by \Cref{e:additivity}, we have
$$ \on{Alg}_{\mathbb{R}^2-\{0\}}(\CV) \simeq \on{Alg}_{\mathbb{R}_{>0}}(\on{Alg}_{S^1}(\CV)) .$$
Hence,
$$ \on{Alg}_{\mathbb{R}^2-\{0\}}(\CV)^{SO(2)} \simeq \on{Alg}_{\mathbb{R}_{>0}}(\on{Alg}_{S^1}(\CV))^{SO(2)} \simeq \on{Alg}_{\mathbb{R}_{>0}}(\on{Alg}_{S^1}(\CV)^{SO(2)}) \simeq \on{Alg}_{\mathbb{R}_{>0}}(\on{Alg}_{E_1}(\CV)), $$
by \Cref{e:so(2) on circle}.

Moreover, by \Cref{c:additivity and fubini}, the functor
$$ \on{Alg}_{\mathbb{R}^2-\{0\}}(\CV) \overset{\on{Nm}_*}{\longrightarrow} \on{Alg}_{\mathbb{R}_{>0}}(\CV) $$
is isomorphic to the composite
$$ \on{Alg}_{\mathbb{R}^2-\{0\}}(\CV) \simeq \on{Alg}_{\mathbb{R}_{>0}}(\on{Alg}_{S^1}(\CV)) \overset{\on{Alg}_{\mathbb{R}_{>0}}(\int_{S^1})}{\longrightarrow} \on{Alg}_{\mathbb{R}_{>0}}(\CV) .$$
The desired result now follows by \Cref{sss:hochschild}.
\end{proof}

\section{Noncommutative orientations and the relative cyclic Deligne conjecture}\label{cycdel}

In this section, we explain how \Cref{t:framed E-2} implies our main theorem, the relative cyclic Deligne conjecture for 
(a generalization of) ``relative Calabi-Yau structures'' in the sense of \cite{bravdyck}.

\ssec{Non-commutative orientations}

\sssec{}

%
Given a $2$-algebra $R$ and a non-commutative calculus of $\hoch{R}$ acting circle-equivariantly on $M$, we obtain an action of $R$ on $M$. Namely, $\hoch{R}$ can be computed as factorization homology of $R$ over a circle, while $R$ itself can be computed as factorization homology of $R$ over an open interval. Embedding an open interval into a circle, we obtain a map of factorization homologies 
\[
\iota: R \arr \hoch{R}
\]
compatible with the residual associative algebra structures (but breaking the circle symmetry in the target). 

Given a ``vector'' $X \in M$ (that is, a point $1_{\Abold} \stackrel{x}\arr M$), we denote the composite 
\[
R \stackrel{1_{R} \otimes x}\arr R \otimes M \stackrel{\iota \otimes id}\arr \hoch{R} \otimes M \stackrel{act}\arr M
\]
simply as
\begin{equation}\label{capprod}
R \stackrel{- \cap x}\arr M.
\end{equation}

\sssec{}

Let $\Cbold$ be an $\Abold$-linear dualizable category. A {\it non-commutative orientation} on $\Cbold$ of dimension $d$ is by definition a circle-invariant class $\theta \in \circinv{\hoch{\Cbold}}[-d]$ such that capping against the underlying vector $\theta$ 
\[
\hochco{\Cbold} \stackrel{- \cap \theta}\arr \hoch{\Cbold}[-d]
\]
is an isomorphism. 

More generally, a {\it non-commutative relative orientation} of dimension $d$ on a dualizable functor $f: \Cbold \arr \Dbold$ is a circle invariant class $\theta \in \circinv{\hoch{\Cbold \stackrel{f}\arr \Dbold}}[-d]$ such that capping on $\theta$ 
\[
\hochco{\Cbold \stackrel{f}\arr \Dbold} \stackrel{- \cap \theta}\arr \hoch{\Cbold \stackrel{f}\arr \Dbold}[-d]
\]
is an equivalence. Note that taking $\Dbold=0$, we are reduced to the absolute case. 

\sssec{}

Circle-invariants of Hochschild homology are often referred to as ``negative cyclic homology'', so for $\Abold=\Vect$, one would write the datum of an orientation as $\theta \in \negcyc{\Cbold}[-d] := \hoch{\Cbold}^{hS^{1}}[-d]$, while for $\Abold=\Spct$, one would write $\theta \in TC^{-}(\Cbold)[-d]:=THH(\Cbold)^{hS^{1}}[-d]$, and likewise for relative classes.

\begin{rem}
In the case that $A=\Spct$, $TC^{-}$ admits a refinement given by \emph{topological cyclic homology} $TC$. One can consider a refined version of an orientation $\theta \in TC(\Cbold)[-d]$ (whose image under the natural map $TC(\Cbold) \to TC^{-}(\Cbold)$ is an orientation as above).  It would be interesting to study the consequences of having such a refinement.
\end{rem}

\sssec{}

In practice, it often happens that the non-commutative relative orientation $\theta \in  \circinv{\hoch{\Cbold \stackrel{f}\arr \Dbold}}[-d]$ restricts along the boundary to a non-commutative absolute orientation $\overline{\theta} \in \circinv{\hoch{\Cbold}}[1-d]$. We say in this case that $\theta$ is {\it non-degenerate on the boundary}. 

Note that if $\theta$ is non-degenerate on the boundary, then action on $\theta$ and $\overline{\theta}$ induces an equivalence of fiber sequences
\begin{equation}\label{nclefschetzseq}
\xymatrix{\Homm{\Abold}{{\rm cof}(\varepsilon)}{\Id{\Dbold}} \ar[r] \ar[d]_{\simeq} & \hochco{\Cbold \stackrel{f}\arr \Dbold} \ar[d]^{- \cap \theta}_{\simeq} \ar[r] & \hochco{\Cbold} \ar[d]^{- \cap \overline{\theta}}_{\simeq} \\
\hoch{\Dbold}[-d] \ar[r] & \hoch{\Cbold \stackrel{f}\arr \Dbold}[-d] \ar[r] & \hoch{\Cbold}[1-d].}
\end{equation}

Here, ${\rm cof}(\varepsilon)$ is defined by the fiber sequence of functors $ff^{r} \stackrel{\varepsilon}\arr \Id{\Dbold} \arr {\rm cof}(\varepsilon)$. The identification of the fiber of $\hochco{\Cbold \stackrel{f}\arr \Dbold} \arr \hochco{\Cbold}$ with $\Homm{\Abold}{{\rm cof}(\varepsilon)}{\Id{\Dbold}}$ is readily verified using the isomorphism $\hochco{\Cbold \stackrel{f}\arr \Dbold} \simeq \hochco{\Cbold} \times_{\End{f}} \hochco{\Dbold} $.

\sssec{}

Here is our main theorem, the relative cyclic Deligne conjecture.

\begin{thm}[Relative cyclic Deligne]\label{relcycdel} Let $\Abold$ be a presentably symmetric monoidal  category and 
$f: \Cbold \arr \Dbold$ an $\Abold$-linear dualizable functor with non-commutative relative orientation $\theta \in \circinv{\hoch{\Cbold \stackrel{f}\arr \Dbold}}[-d]$ of dimension $d$. Then the relative Hochschild cohomology $\hochco{ \Cbold \stackrel{f}\arr \Dbold}$ has an induced structure of framed $E_{2}$-algebra. Equivalently, the shifted relative Hochschild homology $\hoch{\Cbold \stackrel{f}\arr \Dbold}[-d]$ has an induced structure of framed $E_{2}$-algebra. In particular, if $\Cbold$ is an $\Abold$-linear dualizable category with absolute non-commutative orientation, then its Hochschild cohomology $\hochco{\Cbold}$ carries an induced structure of framed $E_{2}$-algebra.

Moreover, if the non-commutative relative orientation $\theta \in \circinv{\hoch{\Cbold \stackrel{f}\arr \Dbold}}[-d]$ is non-degenerate on the boundary, so that $\overline{\theta} \in \circinv{\hoch{\Cbold}}[1-d]$ is an absolute non-commutative orientation, then the map
$$\hochco{\Cbold \stackrel{f}\arr \Dbold} \arr \hochco{\Cbold}$$
is a map of framed $E_{2}$-algebras, hence the fiber
$$\fib{\hochco{\Cbold \stackrel{f}\arr \Dbold}}{\hochco{\Cbold}}$$
carries the structure of non-unital framed $E_{2}$-algebra. Equivalently, via the isomorphism \Cref{nclefschetzseq} of fiber sequences, the sequence
$$\hoch{\Dbold}[-d] \arr \hoch{\Cbold \stackrel{f}\arr \Dbold}[-d] \arr \hoch{\Cbold}[1-d]$$ carries the structure of a fiber sequence of framed $E_{2}$-algebras, with the fiber non-unital.
\end{thm}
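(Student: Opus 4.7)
The plan is to verify that the data of a relative orientation exactly matches the triple required by \Cref{t:framed E-2}, and then read off the framed $E_2$-structure from that identification.

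First, I would assemble the three pieces of data. By the relative Deligne conjecture (\Cref{reldelconj}), $A := \hochco{\Cbold \stackrel{f}\arr \Dbold}$ carries a natural $2$-algebra structure, hence via Dunn additivity an $E_2$-algebra structure in $\Abold$. By the relative non-commutative calculus (\Cref{relcalc}), the circle-equivariant algebra $\hoch{A}$ acts on the circle-equivariant fiber sequence
\[
\hoch{\Cbold} \arr \hoch{\Dbold} \arr \hoch{\Cbold \stackrel{f}\arr \Dbold},
\]
so in particular $M := \hoch{\Cbold \stackrel{f}\arr \Dbold}[-d]$ is naturally a circle-equivariant $\hoch{A}$-module. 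Finally, the relative orientation $\theta \in \circinv{\hoch{\Cbold \stackrel{f}\arr \Dbold}}[-d]$ is by definition a map $1_{\Abold} \to M$ in $\Abold^{SO(2)}$, which promotes $M$ to an object of $\on{AlgMod}_*(\Abold^{SO(2)})$ lying over $\hoch{A} \in \on{Alg}(\Abold^{SO(2)})$. The non-degeneracy hypothesis on $\theta$ is precisely the statement that the composite $A \to \hoch{A} \to M$ (which is cap product with $\theta$, cf.\ \Cref{capprod}) is an equivalence.

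The triple $(A, M, \theta)$ therefore lies in the essential image of the fully faithful embedding
\[
\on{Alg}_{fE_2}(\Abold) \injarr \on{Alg}_{E_2}(\Abold) \underset{\on{Alg}(\Abold^{SO(2)})}{\times} \on{AlgMod}_*(\Abold^{SO(2)})
\]
of \Cref{t:framed E-2}, which endows $A = \hochco{\Cbold \stackrel{f}\arr \Dbold}$ with the desired framed $E_2$-structure refining its $E_2$-algebra structure, and equivalently endows $M = \hoch{\Cbold \stackrel{f}\arr \Dbold}[-d]$ with such a structure. The absolute statement follows by specializing to $\Dbold = 0$.

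For the boundary assertion, I would argue functorially. The source functor $s: \Arr{\LinCat} \to \LinCat$ is symmetric monoidal, so it induces a map of $E_2$-algebras $\hochco{\Cbold \stackrel{f}\arr \Dbold} \to \hochco{\Cbold}$; by naturality of \Cref{relcalc} in the functor $f$, the cofiber map $\hoch{\Cbold \stackrel{f}\arr \Dbold}[-d] \to \hoch{\Cbold}[1-d]$ in \Cref{nclefschetzseq} is equivariantly a map of modules along the corresponding map of circle-equivariant algebras, and $\theta$ maps to $\overline{\theta}$ by construction. The two triples thus assemble into a morphism inside the fiber product above, which under the embedding of \Cref{t:framed E-2} is a map of framed $E_2$-algebras. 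Taking fibers commutes with the embedding and produces a non-unital framed $E_2$-algebra because the unit condition of \Cref{t:framed E-2} is broken exactly by the failure of the pointing to survive to the fiber. The main obstacle I anticipate is purely bookkeeping: ensuring that the circle-equivariant pointed module structure produced by \Cref{relcalc} is compatible, via the embedding of \Cref{t:framed E-2}, with the $E_2$-algebra structure from \Cref{reldelconj}—i.e.\ that the $\hoch{A}$-action on $M$ obtained from relative calculus agrees with the one the embedding demands, namely the one coming from viewing $M$ as a pointed module over the Hochschild homology of the $E_2$-algebra $A$. This compatibility is essentially the content of \Cref{abstcalc} applied to $R = A$, but it should be verified carefully.
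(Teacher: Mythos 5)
Your proposal is correct and follows essentially the same route as the paper: apply \Cref{t:framed E-2} to the triple consisting of the $E_2$-algebra $\hochco{\Cbold \stackrel{f}\arr \Dbold}$ from \Cref{reldelconj}, the circle-equivariant module $\hoch{\Cbold \stackrel{f}\arr \Dbold}[-d]$ from \Cref{relcalc}, and the orientation class $\theta$, with non-degeneracy giving exactly the cap-product isomorphism required for the essential image, and the boundary statements following by naturality. The compatibility worry you flag at the end is indeed settled by \Cref{hochhomof2alg}/\Cref{abstcalc}, which is precisely how the paper's construction of the calculus produces the $\hoch{A}$-module structure in the first place.
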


\begin{proof} Given what we have already established, the proof is straightforward. Namely, by
\Cref{t:framed E-2}, to give a framed $E_{2}$-algebra is to give an underlying $E_{2}$-algebra $R$, a circle-module $M$ for $\hoch{R}$, and a circle-invariant vector $m \in M^{hS^{1}}$ such that capping $R \stackrel{- \cap v}\arr M$ is an isomorphism. Taking $R=\hochco{\Cbold \stackrel{f}\arr \Dbold}$, $M=\hoch{\Cbold \stackrel{f}\arr \Dbold}$, and $m=\theta \in \hoch{\Cbold \arr \Dbold}^{hS^{1}}[-d]$, we obtain the desired framed $E_{2}$-algebra structure on $\hoch{\Cbold \stackrel{f}\arr \Dbold}$. 

The remaining statements follow by naturality of the constructions. 

\end{proof}

\sssec{} As we shall see in the next subsection, our examples of relative non-commutative orientations are induced by relative left Calabi-Yau structures in the sense of \cite{bravdyck},\cite{bravdyck2}. For an alternative introduction to relative Calabi-Yau structures, see \cite{kellerwang}. We now briefly review such structures, following the treatment in \cite{bravdyck2} and explain how they induce relative non-commutative orientations. 

\sssec{}

Recall that a dualizable $\Abold$-linear category $\Cbold$ is {\it smooth} over $\Abold$ if the evaluation functor
$$\ev{\Cbold} : \Cbold^{\vee} \otimes_{\Abold} \Cbold \arr \Abold$$
has an $\Abold$-linear left adjoint
$$\evL{\Cbold} :\Abold \arr \Cbold^{\vee} \otimes_{\Abold} \Cbold.$$
If $\Cbold$ is smooth, then the {\it inverse dualizing functor} is the image of the tensor unit under the left adjoint to evaluation:
$$\Inv{\Cbold}:= \evL{\Cbold} \in \Cbold^{\vee} \otimes_{\Abold} \Cbold \simeq \Endd{\LinCat}{\Cbold}.$$
The $\Abold$-linear trace is then corepresented by $\Inv{\Cbold}$, so that in particular there is an identification
$$\hoch{\Cbold} \simeq \Homm{\Abold}{\Inv{\Cbold}}{\Id{\Cbold}}.$$

\sssec{}

Let $f: \Cbold \arr \Dbold$ be a dualizable functor between smooth categories and consider a class
$$\theta \in \hoch{\Cbold \stackrel{f}\arr \Dbold}[-d]$$
and its image $\overline{\theta} \in \hoch{\Cbold}[1-d]$ under the boundary map
$$\hoch{\Cbold \stackrel{f}\arr \Dbold}[-d] \arr \hoch{\Cbold}[1-d].$$
Since $\overline{\theta}$ is given as the image of $\theta$, the image of $\overline{\theta}$ along $\hoch{\Cbold}[1-d] \arr \hoch{\Dbold}[1-d]$ is equipped with a null-homotopy. 

With respect to the identifications
$$\hoch{\Cbold} \simeq \Homm{\Abold}{\Inv{\Cbold}}{\Id{\Cbold}} \mbox{ and } \hoch{\Dbold} \simeq \Homm{\Abold}{\Inv{\Dbold}}{\Id{\Dbold}},$$
the image of $\Inv{\Cbold}[1-d] \stackrel{\overline{\theta}} \arr \Id{\Cbold}$ under the induced map $\hoch{\Cbold}[1-d] \arr \hoch{\Dbold}[1-d]$ is given by the composite 
\[
\Inv{\D}[d-1] \arr f \Inv{\Cbold}[d-1] f^{r} \stackrel{f\alpha f^{r}}\arr ff^{r} \arr \Id{\D}.
\]
Since the composite is equipped with a null-homotopy, there is an induced commutative diagram of endofunctors of $\Dbold$
\begin{equation}\label{leftrelCYdiag}
\xymatrix{\Inv{\Dbold}[d-1] \ar[r] \ar[d] & f \Inv{\Cbold} f^{r} [d-1] \ar[r] \ar[d]^{f \overline{\theta} f^{r}} & {\rm cof} \ar[d] \\
{\rm fib} \ar[r] & ff^{r} \ar[r]^{\varepsilon} & \Id{\Dbold}}.
\end{equation}

By definition, a {\it relative left Calabi-Yau structure} of dimension $d$ on $\Cbold \stackrel{f}\arr \Dbold$ is a circle-invariant class $\theta \in \hoch{\Cbold \stackrel{f}\arr \Dbold}^{hS^{1}}[-d]$ such that all vertical arrows in the arrow diagram \Cref{leftrelCYdiag} are isomorphisms. In particular, the map 
$$f \Inv{\Cbold} f^{r}[d-1] \stackrel{f \overline{\theta} f^{r}}\arr ff^{r}$$
is required to be an isomorphism. In practice, the map $\Inv{\Cbold}[d-1] \stackrel{{\theta}}\arr \Id{\Cbold}$ is usually already an isomorphism, so that the boundary category $\Cbold$ is equipped with an {\it absolute left Calabi-Yau structure} of dimension $d-1$.
 
 \sssec{} We formulate the relation between relative orientations and relative left Calabi-Yau structures.
 
 \begin{lem}\label{relcytorelor}
Suppose that we are given a relative left Calabi-Yau structure $\theta \in \hoch{\Cbold \stackrel{f}\arr \Dbold}[-d]$ of dimension $d$ on an $\Abold$-linear dualizable functor $\Cbold \stackrel{f}\arr \Dbold$ between smooth categories, and suppose that the induced boundary class $\overline{\theta} \in \hoch{\Cbold}[1-d]$ is an absolute left Calabi-Yau structure on $\Cbold$. Then capping on $\theta \in \hoch{\Cbold \stackrel{f}\arr \Dbold}[-d]$ gives an isomorphism
$$\hochco{\Cbold \stackrel{f}\arr \Dbold} \stackrel{- \cap \theta}\simeq \hoch{\Cbold \stackrel{f}\arr \Dbold}[-d].$$
In short, a relative left Calabi-Yau structure induces a non-commutative relative orientation.
\end{lem}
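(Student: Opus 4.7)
The plan is to deduce the claimed isomorphism via a two-out-of-three argument applied to the morphism of fiber sequences \Cref{nclefschetzseq}, with cap product with $\theta$ appearing as the middle vertical; it then suffices to show the outer two verticals are equivalences, and the absolute left Calabi-Yau hypothesis on $\overline{\theta}$ together with the relative left Calabi-Yau hypothesis on $\theta$ will provide exactly what is needed for each.

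First I would set up the top row as a fiber sequence. Starting from the pullback identification $\hochco{\Cbold \stackrel{f}\arr \Dbold} \simeq \hochco{\Cbold} \times_{\End{f}} \hochco{\Dbold}$ of \Cref{reldelconj}, and writing $\End{f} \simeq \Homm{\Abold}{ff^{r}}{\Id{\Dbold}}$ by adjunction, the natural map $\hochco{\Dbold} \to \End{f}$ becomes precomposition with the counit $\varepsilon \colon ff^{r} \to \Id{\Dbold}$. The fiber of $\hochco{\Cbold \stackrel{f}\arr \Dbold} \arr \hochco{\Cbold}$ is then identified with $\Homm{\Abold}{{\rm cof}(\varepsilon)}{\Id{\Dbold}}$. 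The bottom row is the rotation and $(-d)$-shift of the relative Hochschild homology fiber sequence \Cref{relhochhomseq}, and cap product with $\theta$ intertwines the two sequences by naturality of the non-commutative calculus \Cref{relcalc} together with source-target functoriality.

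For the right column, the smoothness of $\Cbold$ gives identifications $\hochco{\Cbold} \simeq \Endd{\Abold}{\Id{\Cbold}}$ and $\hoch{\Cbold} \simeq \Homm{\Abold}{\Inv{\Cbold}}{\Id{\Cbold}}$, under which $-\cap\overline{\theta}$ becomes precomposition with the natural transformation $\overline{\theta}\colon \Inv{\Cbold}[d-1] \to \Id{\Cbold}$. The hypothesis that $\overline{\theta}$ is an absolute left Calabi-Yau structure says exactly that this natural transformation is an equivalence of endofunctors, so precomposition is an equivalence. For the left column, the relative left Calabi-Yau condition on $\theta$---specifically, the leftmost vertical of \Cref{leftrelCYdiag} being an isomorphism---gives ${\rm fib}(\varepsilon) \simeq \Inv{\Dbold}[d-1]$, whence ${\rm cof}(\varepsilon) \simeq \Inv{\Dbold}[d]$ after rotation of the fiber sequence. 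Applying $\Homm{\Abold}{-}{\Id{\Dbold}}$ then yields $\Homm{\Abold}{{\rm cof}(\varepsilon)}{\Id{\Dbold}} \simeq \hoch{\Dbold}[-d]$.

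The main technical point I expect to have to settle carefully is the compatibility of these identifications with the cap product---namely that the map induced on the fibers by $-\cap\theta$ really agrees with the identification produced by the relative Calabi-Yau condition, and that the whole diagram commutes on the nose. Both amount to naturality checks that should flow from the universal property of $\hochco{\Cbold \stackrel{f}\arr \Dbold}$ as the universal $2$-algebra acting on $f$ (\Cref{reldelconj}) and the functoriality of Hochschild homology (\Cref{hochhomfunct}), but they require the bookkeeping. Once compatibility is established, two-out-of-three for fiber sequences forces the middle vertical $-\cap\theta$ to be an equivalence, which is exactly the statement that $\theta$ is a non-commutative relative orientation.
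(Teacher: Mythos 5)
Your proposal is correct and follows essentially the same route as the paper's proof: cap product with $\theta$ induces a map of fiber sequences, the right vertical is an equivalence because the absolute Calabi-Yau condition identifies $-\cap\overline{\theta}$ with $\Homm{\Abold}{-}{\Id{\Cbold}}$ applied to the equivalence $\Inv{\Cbold}[d-1]\to\Id{\Cbold}$, and the left vertical is an equivalence because the relative Calabi-Yau diagram \Cref{leftrelCYdiag} gives $\Inv{\Dbold}[d]\simeq{\rm cof}(\varepsilon)$, so two-out-of-three forces the middle map to be an equivalence. The compatibility/naturality bookkeeping you flag is likewise treated implicitly in the paper, so there is no substantive difference in approach.
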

 
\begin{proof} 
 
 Capping against $\overline{\theta}$ gives a map $\hochco{\Cbold} \stackrel{- \cap \overline{\theta}}\arr \hoch{\Cbold}[1-d]$, which is evidently an isomorphism since it is given by applying $\Homm{\Abold}{-}{\Id{\Cbold}}$ to the isomorphism $\Inv{\Cbold}[d-1] \stackrel{\overline{\theta}}\arr \Id{\Cbold}$. Then capping against $\theta$ gives a commutative diagram of fiber sequences
 \begin{equation}
 \xymatrix{\Homm{\Abold}{{\rm cof}(\varepsilon)}{\Id{\Dbold}} \ar[r] \ar[d] & \hochco{\Cbold \stackrel{f}\arr \Dbold} \ar[r]\ar[d] & \hochco{\Cbold} \ar[d]^{\simeq} \\
 \hoch{\Dbold}[-d] \ar[r] & \hoch{\Cbold \stackrel{f}\arr \Dbold}[-d] \ar[r] & \hoch{\Cbold}[1-d].}
 \end{equation}
 We claim that the left vertical arrow is an isomorphism, hence the middle vertical arrow is as well, and we obtain a non-commutative relative orientation as claimed. Indeed, under the identification
$$\hoch{\Dbold}[d] \simeq \Hom{\Abold}{\Inv{\Dbold}[d]}{\Id{\Dbold}},$$
the left hand vertical arrow is obtained by applying $\Homm{\Abold}{-}{\Id{\Dbold}}$ to the arrow $\Inv{\Dbold}[d] \arr {\rm cof}(\varepsilon)$, which is an isomorphism by the defining diagram \Cref{leftrelCYdiag} of a relative Calabi-Yau structure. 
  
\end{proof}







\ssec{Examples of non-commutative orientations}

We give some examples of relative left Calabi-Yau structures, hence by \Cref{relcytorelor} examples of non-commutative relative orientations. Since we provide references to the details, we shall be brief.

\sssec{String topology}

We fix a ground commutative ring spectrum $E$ and let ${\bf H}_{*}(X)=E \wedge X_{+}$ and ${\bf H}^{*}(M)=\Homm{\Spct}{\Sigma^{\infty}X_{+}}{E}$. Thus we are using (co)homology notation for the relevant spectra and suppressing the dependence on $E$. Likewise, we let $\Loc{X}$ denote the category of local systems of $E$-module spectra. 

First, consider an oriented $d$-manifold with boundary $(M,\partial M)$ with relative orientation class $[M,\partial M] \in {\bf H}_{*}(M,\partial M)[-d]$ in the sense of Lefschetz duality, namely capping on $[M,\partial M]$ induces an isomorphism of fiber sequences
\[
\xymatrix{{\bf H}^{*}(M,\partial M) \ar[r] \ar[d]_{\simeq} & {\bf H}^{*}(M) \ar[r] \ar[d]_{\simeq}^{{- \cap [M,\partial M] }} & {\bf H}^{*}(\partial M) \ar[d]_{\simeq} \\
{\bf H}_{*}(\partial M)[-d] \ar[r] & {\bf H}_{*}(M,\partial M)[-d] \ar[r] & {\bf H}_{*}(\partial M)[1-d]}
\]
We claim that the homological pushforward/induction functor
\[
i_{!} : \Loc{\partial M} \arr \Loc{M}
\]
carries an induced non-commutative relative orientation of dimension $d$. We sketch the construction, referring to \cite[Section 5.1]{bravdyck} for more details.

Pushing forward along the ``constant loops'' vertical arrows in the circle-invariant diagram  
\[
\xymatrix{\partial M \ar[r] \ar[d] & M \ar[d] \\
L\partial M \ar[r] & LM}
\] 
we obtain a circle-invariant class $[M,\partial M] \in {\bf H}_{*}(LM,L\partial M)^{hS^{1}}[-d]$. By the Goodwillie-Jones theorem, there is a circle-equivariant isomorphism ${\bf H}_{*}(LM,L\partial M) \simeq \hoch{\Loc{M},\Loc{\partial M}}$, hence
$$[M,\partial M] \in {\bf H}_{*}(LM,L\partial M)^{hS^{1}}[-d]$$
gives a class $\theta \in \hoch{\Loc{M},\Loc{\partial M}}^{hS^{1}}[-d]$. Using Lefschetz duality, one deduces that this class gives a relative left Calabi-Yau structure that is non-degenerate on the boundary, and hence a non-commutative orientation that is non-degenerate on the boundary.

Applying \Cref{relcycdel}, we immediately obtain chain-level genus zero string topology operations on relative loop homology.

\begin{cor}\label{stringtopcor}
Fix a ground commutative ring spectrum $E$ and let $[M,\partial M] \in {\bf H}_{*}(M,\partial M)[-d]$ be a relative orientation of dimension $d$. Then there is an induced fiber sequence of (chain-level) framed $E_{2}$-algebra structures 
\[
{\bf H}_{*}(LM)[-d] \arr {\bf H}_{*}(LM,L\partial M)[-d] \arr {\bf H}_{*}(L\partial M)[1-d].
\]
in which the fiber is a non-unital framed $E_{2}$-algebra.
\end{cor}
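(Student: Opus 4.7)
The plan is to apply \Cref{relcycdel} to the $\Abold=\Mod{E}$-linear dualizable functor $i_!: \Loc{\partial M} \arr \Loc{M}$, endowed with the non-commutative relative orientation constructed in the discussion of string topology just preceding the corollary. The relative loop homology will then be identified with relative Hochschild homology via the circle-equivariant Goodwillie-Jones isomorphism, and Lefschetz duality will deliver the non-degeneracy on the boundary needed to split off the fiber as a non-unital framed $E_2$-algebra.

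First, I would record in detail that $i_!$ is indeed a dualizable $\Abold$-linear functor between smooth categories and write down the class $\theta \in \hoch{\Loc{M},\Loc{\partial M}}^{hS^{1}}[-d]$ obtained by pushing the cycle $[M,\partial M]$ into $\mathbf{H}_{*}(LM,L\partial M)^{hS^{1}}[-d]$ along the constant-loop inclusions and then transporting across Goodwillie-Jones. Second, I would check that $\theta$ is a relative left Calabi-Yau structure in the sense of the excerpt, together with its boundary $\overline{\theta} \in \hoch{\Loc{\partial M}}^{hS^{1}}[1-d]$ being an absolute left Calabi-Yau structure on $\Loc{\partial M}$. This is where Lefschetz duality enters: the assumption that capping with $[M,\partial M]$ induces the isomorphism of fiber sequences displayed before the corollary translates, under Goodwillie-Jones and the endofunctor interpretation of (co)homology, precisely into the requirement that the vertical arrows of the diagram \Cref{leftrelCYdiag} for $i_!$ are isomorphisms. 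This step is essentially bookkeeping but is the geometric heart of the argument; I would refer to \cite[Section 5.1]{bravdyck} for the detailed verification.

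Having established the relative left Calabi-Yau structure, \Cref{relcytorelor} converts it into a non-commutative relative orientation of dimension $d$ on $i_!$, non-degenerate on the boundary. Now I invoke \Cref{relcycdel} directly: it endows $\hoch{\Loc{\partial M} \stackrel{i_!}\arr \Loc{M}}[-d]$ with a framed $E_{2}$-algebra structure and, thanks to non-degeneracy on the boundary, asserts that the cofiber sequence
\[
\hoch{\Loc{M}}[-d] \arr \hoch{\Loc{\partial M} \stackrel{i_!}\arr \Loc{M}}[-d] \arr \hoch{\Loc{\partial M}}[1-d]
\]
is a fiber sequence of framed $E_{2}$-algebras with non-unital fiber. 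Finally, transporting this sequence back along the circle-equivariant Goodwillie-Jones isomorphisms $\hoch{\Loc{M}} \simeq \mathbf{H}_{*}(LM)$, $\hoch{\Loc{\partial M}} \simeq \mathbf{H}_{*}(L\partial M)$, and the induced identification on the cofiber $\hoch{\Loc{\partial M} \stackrel{i_!}\arr \Loc{M}} \simeq \mathbf{H}_{*}(LM, L\partial M)$, yields the asserted fiber sequence of chain-level framed $E_{2}$-algebras.

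The main obstacle will be the verification in step two that the class $\theta$ really does satisfy the left Calabi-Yau conditions: one must match the map $i_! \Inv{\Loc{\partial M}} i_!^{r}[d-1] \to i_! i_!^{r}$ induced by $\overline{\theta}$ with the Poincar\'e--Lefschetz duality pairing for $(M,\partial M)$, and likewise identify the cofiber with $\Inv{\Loc{M}}[d]$. Once these comparisons are in place the remaining assembly is formal from the results already proved in the paper.
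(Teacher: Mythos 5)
Your proposal follows essentially the same route as the paper: push the relative fundamental class along constant loops, transport it through the circle-equivariant Goodwillie--Jones identification to get $\theta \in \hoch{\Loc{M},\Loc{\partial M}}^{hS^{1}}[-d]$, use Lefschetz duality (with the details deferred to \cite[Section 5.1]{bravdyck}) to see it is a relative left Calabi--Yau structure non-degenerate on the boundary, and then apply \Cref{relcytorelor} and \Cref{relcycdel}. This matches the paper's argument, so the proposal is correct as written.
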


\sssec{Anticanonical divisors}\label{ss:anticanon indcoh} 

The algebro-geometric analogue of an oriented manifold with boundary is a scheme with anti-canonical divisor, with the categories of local systems on the manifolds replaced by the categories of ind-coherent sheaves on the schemes.

Fix an integral Gorenstein variety $X$ of dimension $d$ over a perfect field $k$, and choose a non-zero anti-canonical section $\sigma \in K^{-1}_{X}$. Then the choice of anti-canonical section $\sigma$ determines a trivialization of the canonical sheaf of the zero scheme $i: Z=Z(\sigma) \hookrightarrow X$. We claim that there is a natural non-commutative relative orientation on the functor $i_{*} : \IndCoh{Z} \arr \IndCoh{X}$. We briefly sketch the construction, referring to \cite[Section 5.2]{bravdyck} for more details.

By a Hochschild vanishing argument (\cite[Lemma 5.11]{bravdyck}), the circle-invariance comes for free, so the essential point in constructing the non-commutative relative orientation is to construct the corresponding relative Hochschild class. For that, one first uses natural identifications $\hoch{\IndCoh{Z}} \simeq \Hom{\IndCoh{Z}}{\Delta_{*}\O_{Z}}{\Delta_{*}\omega_{Z}}$ and $\hoch{\IndCoh{X}} \simeq \Hom{\IndCoh{X}}{\Delta_{*}\O_{X}}{\Delta_{*}\omega_{X}}$. 

The choice of an anti-canonical section $\sigma \in K^{-1}_{X}$ cutting out $Z$ then determines an isomorphism
$$\O_{Z} \simeq K_{Z} \simeq \omega_{Z}[1-d],$$
hence after pushing forward along the diagonal, gives a class $\hoch{\IndCoh{Z}}[1-d]$, giving a non-commutative orientation of dimension $d-1$ on $\IndCoh{Z}$. 

Moreover, the induced map in Hochschild homology $\hoch{\IndCoh{Z}} \arr \hoch{\IndCoh{X}}$ can be shown to identify with the map $\Hom{\IndCoh{Z}}{\Delta_{*}\O_{Z}}{\Delta_{*}\omega_{Z}} \arr \Hom{\IndCoh{X}}{\Delta_{*}\O_{X}}{\Delta_{*}\omega_{X}}$ given by sending
a map $\Delta_{*}\O_{Z} \arr \Delta_{*}\omega_{Z}[k]$ to the composite
\[
\Delta_{*}\O_{X} \arr \Delta_{*}i_{*}\O_{Z} \simeq (i \times i)_{*}\Delta_{*}\O_{Z} \arr (i \times i)_{*}\Delta_{*}\omega_{Z}[k] \simeq \Delta_{*}i_{*}\omega_{Z}[k] \simeq \Delta_{*}i_{*}i^{!}\omega_{X}[k] \arr \Delta_{*}\omega_{X}[k].
\]
In these terms, a lift along $\hoch{\IndCoh{X},\IndCoh{Z}}[-d] \arr \hoch{\IndCoh{Z}}[1-d]$ of the absolute orientation of $\IndCoh{Z}$ amounts to providing a null-homotopy of the composite
\[
\Delta_{*}\O_{X} \arr \Delta_{*}i_{*}\O_{Z} \simeq \Delta_{*}i_{*}\omega_{Z}[1-d] \arr \Delta_{*}\omega_{X}[1-d].
\]
One can show that such a null-homotopy is naturally determined by the choice of the anti-canonical section $\sigma$ and is moreover non-degenerate in the appropriate sense, thus giving a non-commutative relative orientation of dimension $d$ on the  $i_{*} : \IndCoh{Z} \arr \IndCoh{X}$.

\medskip

Applying \Cref{relcycdel}, we obtain the following.

\begin{cor}
Let $X$ be an irreducible Gorenstein variety over a perfect field and $Z=Z(\sigma)$ the zero scheme of a non-zero anti-canonical section $\sigma \in K^{-1}_{X}$. Then there is a fiber sequence of framed $E_{2}$-algebra structures
\[
\hoch{\IndCoh{X}}[-d] \arr \hoch{\IndCoh{Z} \stackrel{i_{*}}\arr \IndCoh{X}}[-d] \arr \hoch{\IndCoh{Z}}[1-d]
\]
in which the fiber is a non-unital framed $E_{2}$-algebra.
\end{cor}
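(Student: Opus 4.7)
The plan is to apply the relative cyclic Deligne conjecture \Cref{relcycdel} to the pushforward $i_{*}: \IndCoh{Z} \arr \IndCoh{X}$, which is dualizable as an $\Abold$-linear functor since the closed immersion $i$ is proper (so $i_{*}$ admits a continuous right adjoint $i^{!}$). This reduces the entire problem to producing a non-commutative relative orientation
\[
\theta \in \circinv{\hoch{\IndCoh{Z} \stackrel{i_{*}}\arr \IndCoh{X}}}[-d]
\]
whose boundary class $\overline{\theta} \in \circinv{\hoch{\IndCoh{Z}}}[1-d]$ is a non-degenerate absolute orientation on $\IndCoh{Z}$.

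For the non-equivariant relative Hochschild class, I would follow the outline in \Cref{ss:anticanon indcoh}. Using the identifications $\hoch{\IndCoh{Z}} \simeq \Hom{\IndCoh{Z}}{\Delta_{*}\O_{Z}}{\Delta_{*}\omega_{Z}}$ and $\hoch{\IndCoh{X}} \simeq \Hom{\IndCoh{X}}{\Delta_{*}\O_{X}}{\Delta_{*}\omega_{X}}$, the trivialization of the anti-canonical bundle induced by $\sigma$ yields $\omega_{Z} \simeq \O_{Z}[d-1]$, which produces the boundary class $\overline{\theta} \in \hoch{\IndCoh{Z}}[1-d]$. A relative lift along the boundary map $\hoch{\IndCoh{Z} \stackrel{i_{*}}\arr \IndCoh{X}}[-d] \arr \hoch{\IndCoh{Z}}[1-d]$ then amounts to producing a null-homotopy of the composite
\[
\Delta_{*}\O_{X} \arr \Delta_{*}i_{*}\O_{Z} \simeq \Delta_{*}i_{*}\omega_{Z}[1-d] \arr \Delta_{*}\omega_{X}[1-d],
\]
which is naturally furnished by $\sigma$ itself in its role as a defining section cutting out $Z$ inside $X$.

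To upgrade this data to a genuine non-commutative relative orientation, I would verify that it constitutes a relative left Calabi-Yau structure in the sense of diagram \Cref{leftrelCYdiag} and then invoke \Cref{relcytorelor}. Non-degeneracy of $\overline{\theta}$ is immediate from the isomorphism $\omega_{Z}[1-d] \simeq \O_{Z}$, giving an absolute Calabi-Yau structure of dimension $d-1$. The relative non-degeneracy reduces to identifying the cofiber of the counit $i_{*}i^{!}\Id{\IndCoh{X}} \arr \Id{\IndCoh{X}}$ with a suitable twist of $i_{*}\O_{Z}$, which is a manifestation of the Koszul-type resolution of the diagonal of $X$ along $Z$ determined by $\sigma$. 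The main technical obstacle is to promote the class $\theta$ to a circle-invariant one; for this I would appeal to the Hochschild vanishing argument of \cite[Lemma 5.11]{bravdyck}, which ensures that the Connes-type $B$-operator annihilates the relevant class so that circle invariance is automatic. With $\theta$ in hand, \Cref{relcycdel} delivers the desired fiber sequence of framed $E_{2}$-algebra structures, and the non-unitality of the fiber term $\hoch{\IndCoh{X}}[-d]$ is a direct consequence of the corresponding conclusion of that theorem.
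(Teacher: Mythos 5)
Your proposal is correct and follows essentially the same route as the paper: construct the relative left Calabi--Yau structure on $i_{*}:\IndCoh{Z}\arr\IndCoh{X}$ from the anti-canonical section $\sigma$ (with circle-invariance supplied by the Hochschild vanishing argument of \cite[Lemma 5.11]{bravdyck}), pass to a non-commutative relative orientation via \Cref{relcytorelor}, and apply \Cref{relcycdel}. The only difference is cosmetic elaboration of the non-degeneracy step, which the paper likewise defers to \cite[Section 5.2]{bravdyck}.
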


\sssec{Doubled quivers and relative Calabi-Yau completions}

Relative orientations also appear surprisingly often in representation theory, often in fact from a universal construction of the ``relative Calabi-Yau completion'' of \cite{yeung}, a construction which takes a dualizable functor $\Cbold \arr \Dbold$ between smooth categories and produces a new dualizable functor $\widetilde{\Cbold} \arr \widetilde{\Dbold}$ equipped with a relative orientation of dimension $d$. 

In examples, the initial functor $\Cbold \arr \Dbold$ is often given as induction of modules along a map of algebras $R \arr S$, in which case the Calabi-Yau completion $\widetilde{\Cbold} \arr \widetilde{\Dbold}$ is given as induction of modules along a new map of algebras $\widetilde{R} \arr \widetilde{S}$.

As a particular case of the construction, start with a finite quiver $Q$ with vertex set $Q_{0}$ and arrow set $Q_{1}$, as well as the doubled quiver $\overline{Q}$, which in addition to all of the arrows in the original quiver $Q$ has for every arrow $a : i \arr j$ from $Q$ an additional dual arrow $a^{*} : j \arr i$ in the opposite direction. We shall also consider $Q_0$ itself as a quiver having only vertices and no non-trivial arrows. 

Consider now the map of path algebras $k[Q_0] \arr k[Q]$ induced by the inclusion of quivers $Q_0 \subset Q$. Then at the level of algebras the relative Calabi-Yau completion of dimension $2$ is given by a map $k[Q_{0}][t] \arr k[\overline{Q}]$ taking the element $t$ to the ``preprojective element'' $\sum_{} [a,a^{*}] \in k[\overline{Q}]$. 

For details on this particular example, see \cite{kellerwang}. 

\ssec{Non-commutative co-orientations}

\sssec{}

Let $\Dbold$ be a dualizable $\Abold$-linear category. A {\it non-commutative co-orientation} on $\Dbold$ of dimension $d$ is by definition a circle-invariant map $\tau : \hoch{\Dbold} \arr 1_{\Abold}[-d]$, that is, a circle invariant class 
\[
\tau \in \Homm{\Abold}{\hoch{\Dbold}}{1_{\Abold}}^{hS^{1}}[-d]
\]
such that capping
\[
\hochco{\Dbold} \stackrel{- \cap \tau}\arr \Homm{\Abold}{\hoch{\Dbold}}{1_{\Abold}}^{\vee}[-d]
\]
is an isomorphism. Here and below we are using $(-)^\vee$ as shorthand for $\Abold$-linear duality $\Homm{\Abold}{-}{1_{\Abold}}$.

More generally, {\it a non-commutative relative co-orientation} of dimension $d$ on a dualizable functor $f: \Dbold \stackrel{f}\arr \Cbold$ is a circle-invariant class 
\[
\tau \in \Homm{\Abold}{\hoch{\Dbold \stackrel{f}\arr \Cbold}}{1_{\Abold}}^{hS^{1}}[1-d]
\] 
such that capping
\[
\hochco{\Dbold \stackrel{f}\arr \Cbold} \stackrel{- \cap \tau}\arr \hoch{\Dbold \stackrel{f}\arr \Cbold}^{\vee}[1-d]
\]
is an isomorphism. 
Note the shift by $1-d$ rather than $-d$, which reflects the fact that the functor $\Dbold \arr \Cbold$ plays the role of ``restriction to the boundary''. In particular, if $\Cbold=0$, so that we have ``empty boundary", then a relative right Calabi-Yau structure of dimension $d$ is nothing but an absolute right Calabi-Yau structure on $\Dbold$. 

\sssec{}

In examples, it typically happens that restriction of a relative right Calabi-Yau structure
$$\tau : \hoch{\Dbold \arr \Cbold} \arr 1_{\Abold}[1-d]$$
to a circle-invariant map $\overline{\tau} : \hoch{\Cbold} \arr \hoch{\Dbold \stackrel{f}\arr \Cbold} \arr 1_{\Abold}[1-d]$ gives an absolute right Calabi-Yau structure on $\Cbold$ of dimension $d-1$. In this case, capping on $\tau$ and $\overline{\tau}$ gives an identification of fiber sequences
\[
\xymatrix{
\hochco{\Dbold;{\rm fib}(\eta)} \ar[r] \ar[d]_{\simeq} & \hochco{\Dbold \arr \Cbold} \ar[r]
 \ar[d]_{\simeq}^{- \cap \tau} & \hochco{\Cbold} \ar[d]_{\simeq}^{- \cap \overline{\tau}} \\
\hoch{\Dbold}^{\vee}[-d] \ar[r] & \hoch{\Dbold \arr \Cbold}^{\vee}[1-d] \ar[r] & \hoch{\Cbold}^{\vee}[1-d],}
\]
where $\eta$ is the unit $\Id{\Dbold} \stackrel{\eta}\arr f^{r}f$. The identification of the fiber of 
$\hochco{\Dbold \arr \Cbold} \stackrel{- \cap \tau}\arr  \hochco{\Cbold}$ with
$\hochco{\Dbold; {\rm fib}(\eta)}$ follows easily from the isomorphism
$$\hochco{\Dbold \arr \Cbold} \simeq \hochco{\Cbold} \times_{\End{f}} \hochco{\Dbold}.$$ 

\sssec{} Here is a variation on the cyclic Deligne conjecture, this time for relative co-orientations. 

\begin{thm}[Relative cyclic Deligne for co-orientations]\label{relcycdelprop}
Given a dualizable functor $f: \Dbold \arr \Cbold$ with non-commutative co-orientation $\tau \in \Homm{\Abold}{\hoch{\Dbold \stackrel{f}\arr \Cbold}}{1_{\Abold}}^{hS^{1}}[1-d]$ of dimension $d$, the relative Hochschild cohomology $\hochco{\Dbold \stackrel{f}\arr \Cbold}$ has an induced structure of framed $E_{2}$-algebra. 

If in addition $\tau : \hoch{\Dbold \stackrel{f}\arr \Cbold} \arr 1_{\Abold}[1-d]$ restricts to an absolute non-commutative co-orientation $\overline{\tau} : \hoch{\Cbold} \arr 1_{\Abold}[1-d]$, then there is a fiber sequence of framed $E_{2}$-algebras
$$\hochco{\Dbold;{\rm fib}(\eta)} \arr \hochco{\Dbold \arr \Cbold} \arr \hochco{\Cbold},$$
equivalently a fiber sequence of framed $E_{2}$-algebra structures 
$$\hoch{\Dbold}^{\vee}[-d] \arr  \hoch{\Dbold \stackrel{f}\arr \Dbold}^{\vee}[1-d] \arr  \hoch{\Cbold}^{\vee}[1-d].$$ Note that in this case, the framed $E_{2}$-algebra structure on the fiber $\hochco{\D;{\rm fib}(\eta)} \simeq \hoch{\Dbold}^{\vee}[-d]$ is non-unital.
\end{thm}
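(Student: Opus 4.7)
The plan is to mimic the proof of \Cref{relcycdel} by applying \Cref{t:framed E-2} with data appropriate to the co-orientation setting. Set $R := \hochco{\Dbold \stackrel{f}\arr \Cbold}$, which is an $E_{2}$-algebra by \Cref{reldelconj}. By the relative non-commutative calculus of \Cref{relcalc}, the relative Hochschild homology $\hoch{\Dbold \stackrel{f}\arr \Cbold}$ is naturally a circle-equivariant module for the circle-equivariant algebra $\hoch{R}$. Taking $\Abold$-linear duality, we obtain a circle-equivariant $\hoch{R}$-module $M := \hoch{\Dbold \stackrel{f}\arr \Cbold}^{\vee}[1-d]$. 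The circle-invariant class $\tau \in \Homm{\Abold}{\hoch{\Dbold \stackrel{f}\arr \Cbold}}{1_{\Abold}}^{hS^{1}}[1-d]$ is, by definition, a circle-invariant vector of $M^{hS^{1}}$, and the non-degeneracy condition built into the notion of non-commutative co-orientation says precisely that the composite capping map $R \arr \hoch{R} \arr M$ is an isomorphism. \Cref{t:framed E-2} then endows $R = \hochco{\Dbold \stackrel{f}\arr \Cbold}$ with the desired framed $E_{2}$-algebra structure refining the $E_{2}$-structure from the Deligne conjecture.

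For the second assertion, I would appeal to naturality. The constructions of \Cref{reldelconj}, \Cref{relcalc}, and \Cref{t:framed E-2} are all natural in the arrow $\Dbold \stackrel{f}\arr \Cbold$ with respect to the source/target functors $s,t: \Arr{\LinCat} \arr \LinCat$. Consequently, the forgetful map $\hochco{\Dbold \stackrel{f}\arr \Cbold} \arr \hochco{\Cbold}$ is a map of framed $E_{2}$-algebras, so its fiber automatically inherits the structure of a non-unital framed $E_{2}$-algebra. Under the hypothesis that $\overline{\tau}$ restricts to an absolute non-commutative co-orientation on $\Cbold$, the commutative diagram of capping maps displayed just above the theorem identifies this fiber with $\hochco{\Dbold;{\rm fib}(\eta)} \simeq \hoch{\Dbold}^{\vee}[-d]$, producing the claimed fiber sequence of framed $E_{2}$-algebras and, under capping, the equivalent fiber sequence at the level of shifted duals of Hochschild homology.

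The main subtlety I anticipate is keeping track of handedness in the duality step: $\Abold$-linear duality $(-)^{\vee} = \Homm{\Abold}{-}{1_{\Abold}}$ is contravariant, so it a priori converts a left $\hoch{R}$-module into a right $\hoch{R}$-module. To confirm that $M = \hoch{\Dbold \stackrel{f}\arr \Cbold}^{\vee}[1-d]$ is a module of the sort needed to feed into \Cref{t:framed E-2}, one can invoke the $S^{1}$-reversal symmetry, which supplies a circle-equivariant algebra isomorphism $\hoch{R} \simeq \hoch{R}^{op}$, or equivalently observe that the factorization-algebra description underlying \Cref{t:framed E-2} is manifestly $SO(2)$-equivariant and therefore indifferent to left/right conventions. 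Once this bookkeeping is settled, the rest of the argument is a direct transcription of the proof of \Cref{relcycdel}.
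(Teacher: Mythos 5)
Your proposal is correct and follows essentially the same route as the paper: the paper's proof of \Cref{relcycdelprop} simply notes it is ``immediate and essentially the same as that of \Cref{relcycdel}'', i.e.\ one feeds the triple $R=\hochco{\Dbold \stackrel{f}\arr \Cbold}$, $M=\hoch{\Dbold \stackrel{f}\arr \Cbold}^{\vee}[1-d]$, $m=\tau$ into \Cref{t:framed E-2} and invokes naturality for the boundary statements, exactly as you do. Your extra remark on the left/right module bookkeeping under duality is a sensible precaution the paper leaves implicit, but it does not change the argument.
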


The proof is immediate and essentially the same as that of \Cref{relcycdel}.

\sssec{}

Examples of relative non-commutative co-orientations typically are induced from relative right Calabi-Yau structures in the sense of \cite{bravdyck}. We briefly describe the relation.

Recall that a dualizable $\Abold$-linear category $\Cbold$ is {\it proper} over $\Abold$ if the evaluation functor
$$\ev{\Cbold} : \Cbold^{\vee} \otimes_{\Abold} \Cbold \arr \Abold$$
has an $\Abold$-linear right adjoint
$$\evR{\Cbold} :\Abold \arr \Cbold^{\vee} \otimes_{\Abold} \Cbold.$$
If $\Cbold$ is proper, then the {\it Serre functor} is by definition the image of the tensor unit $1_{\Abold}$ under the right adjoint to evaluation:
$$\Serre{\Cbold}:= \evR{\Cbold} \in \Cbold^{\vee} \otimes_{\Abold} \Cbold \simeq \Endd{\Abold}{\Cbold}.$$ 
The $\Abold$-linear dual of $\ev{\Cbold}$ is then represented by $\evR{\Cbold}$, so that in particular there is an identification
$$\hoch{\Cbold}^{\vee} \simeq \Homm{\Endd{\Abold}{\Cbold}}{\Id{\Cbold}}{\Serre{\Cbold}}.$$

\sssec{}
Let $f: \Dbold \arr \Cbold$ be a dualizable functor between proper categories and consider a class
$$\tau :\hoch{\Dbold \stackrel{f}\arr \Cbold} \arr 1_{\Abold}[1-d]$$
and its restriction to a class $\overline{\tau} : \hoch{\Cbold} \arr 1_{\Abold}[1-d]$. Since $\overline{\tau}$ is given as the restriction of $\tau$, the further restriction of $\overline{\tau}$ along $\hoch{\Dbold}[d-1] \arr \hoch{\Cbold}[d-1]$ is equipped with a null-homotopy.

With respect to the identifications
$$\hoch{\Cbold}^{\vee} \simeq \Homm{\Abold}{\Id{\Cbold}}{\Serre{\Cbold}} \mbox{ and } \hoch{\Dbold}^{\vee} \simeq \Homm{\Abold}{\Id{\Dbold}}{\Serre{\Dbold}},$$
the restriction of $\Id{\Cbold} \stackrel{\overline{\tau}} \arr \Serre{\Cbold}[1-d]$ under the induced map $\hoch{\Cbold}^{\vee}[1-d] \arr \hoch{\Dbold}^{\vee}[1-d]$ is given by the composite 
\[
\Id{\D} \arr f^{r}f \stackrel{f^{r}\overline{\tau}f}\arr f^{r}\Serre{\Cbold}[1-d] f \arr \Serre{\Dbold}[1-d].
\]
Since the composite is equipped with a null-homotopy, there is an induced commutative diagram of endofunctors of $\Dbold$
\begin{equation}\label{rightrelCYdiag}
\xymatrix{\Id{\Dbold} \ar[r]^{\eta} \ar[d] & f^{r}f \ar[r] \ar[d]^{f^{r} \overline{\tau} f} & {\rm cof} \ar[d] \\
{\rm fib} \ar[r] & f^{r}\Serre{\Cbold}f^{r}[1-d] \ar[r] & \Serre{\Dbold}[1-d]}.
\end{equation}

By definition, a {\it relative right Calabi-Yau structure} of dimension $d$ on $\Dbold \stackrel{f}\arr \Cbold$ is a class 
\[
\tau \in \Homm{\Abold}{\hoch{\Dbold \stackrel{f}\arr \Cbold}}{1_{\Abold}}^{hS^{1}}[1-d]
\] 
such that all vertical arrows in the arrow diagram \Cref{rightrelCYdiag} are isomorphisms. In particular, the map
$$f^{r} \Id{\Cbold} f \stackrel{f^{r} \overline{\tau} f}\arr \Serre{\Cbold}[1-d]$$
is required to be an isomorphism. In practice, the map $\Id{\Cbold} \stackrel{{\tau}}\arr \Serre{\Cbold}[1-d]$ is usually already an isomorphism, so that the boundary category $\Cbold$ is equipped with an {\it absolute right Calabi-Yau structure} of dimension $d-1$.

\sssec{} Here is the relation between relative co-orientations and relative right Calabi-Yau structures.

\begin{lem}\label{rightrelcytonccoor}

Suppose that we are given a relative right Calabi-Yau structure $\tau \in \Homm{\Abold}{\hoch{\Dbold \stackrel{f}\arr \Cbold}}{1_{\Abold}}^{hS^{1}}[1-d]$ of dimension $d$ on an $\Abold$-linear dualizable functor $\Dbold \stackrel{f}\arr \Cbold$ between proper categories, and that the restricted class $\overline{\tau} \in \Homm{\Abold}{\hoch{\Cbold}}{1_{\Abold}}^{hS^{1}}[1-d]$ is an absolute right Calabi-Yau structure on $\Cbold$. Then capping on $\tau \in \hoch{\Dbold \stackrel{f}\arr \Cbold}^{\vee}[1-d]$ gives an isomorphism $\hoch{\Dbold \stackrel{f}\arr \Cbold} \stackrel{- \cap \tau}\simeq \hoch{\Dbold \stackrel{f}\arr \Cbold}^{\vee}[1-d]$, hence a relative right Calabi-Yau structure induces a non-commutative relative co-orientation.

\end{lem}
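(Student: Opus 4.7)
The plan is to mimic the argument for \Cref{relcytorelor} in the dual (proper/right) setting. Capping $\tau$ on the fiber sequence of \Cref{reldelconj}, and capping $\overline{\tau}$ on its absolute analogue, yields a commutative diagram of fiber sequences
\[
\xymatrix{\hochco{\Dbold;{\rm fib}(\eta)} \ar[r] \ar[d] & \hochco{\Dbold \stackrel{f}\arr \Cbold} \ar[r] \ar[d]^{- \cap \tau} & \hochco{\Cbold} \ar[d]^{-\cap \overline{\tau}} \\
\hoch{\Dbold}^{\vee}[-d] \ar[r] & \hoch{\Dbold \stackrel{f}\arr \Cbold}^{\vee}[1-d] \ar[r] & \hoch{\Cbold}^{\vee}[1-d],}
\]
whose bottom row is the $\Abold$-linear dual of the relative Hochschild homology sequence \eqref{relhochhomseq}. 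It suffices to show that the outer two vertical arrows are isomorphisms, as then the middle one is too.

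For the right vertical arrow, use $\hochco{\Cbold} = \Endd{\Abold}{\Id{\Cbold}}$ and, since $\Cbold$ is proper, the identification $\hoch{\Cbold}^{\vee} \simeq \Homm{\Abold}{\Id{\Cbold}}{\Serre{\Cbold}}$. With respect to these, $- \cap \overline{\tau}$ is precisely post-composition with the map $\Id{\Cbold} \to \Serre{\Cbold}[1-d]$ classified by $\overline{\tau}$, and the absolute right Calabi-Yau hypothesis on $\overline{\tau}$ is exactly the assertion that this map is an isomorphism.

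For the left vertical arrow, the pullback decomposition $\hochco{\Dbold \stackrel{f}\arr \Cbold} \simeq \hochco{\Cbold} \times_{\End{f}} \hochco{\Dbold}$ from \Cref{reldelconj} identifies the fiber of $\hochco{\Dbold \stackrel{f}\arr \Cbold} \to \hochco{\Cbold}$ with $\hochco{\Dbold;{\rm fib}(\eta)} \simeq \Homm{\Abold}{\Id{\Dbold}}{{\rm fib}(\eta)}$, while properness of $\Dbold$ gives $\hoch{\Dbold}^{\vee} \simeq \Homm{\Abold}{\Id{\Dbold}}{\Serre{\Dbold}}$. Under these identifications the left vertical arrow is post-composition with a map ${\rm fib}(\eta) \to \Serre{\Dbold}[-d]$ extracted from \eqref{rightrelCYdiag}: shifting the top cofiber sequence presents ${\rm fib}(\eta) \simeq {\rm cof}[-1]$, and the right Calabi-Yau condition that the rightmost vertical ${\rm cof} \to \Serre{\Dbold}[1-d]$ is an isomorphism becomes the desired isomorphism after this shift.

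The main obstacle, rather than being purely formal duality, is verifying that the operadically defined cap product \eqref{capprod}, built from the factorization-homology module structure of \Cref{relcalc}, actually agrees with post-composition on Hom objects under the above duality identifications. I would handle this by tracing through the construction of the circle-equivariant map $\hoch{\Dbold \stackrel{f}\arr \Cbold} \to 1_{\Abold}[1-d]$ arising from the non-degeneracy data, exactly as is done (dually, for smoothness and $\Inv{-}$) in the proof of \Cref{relcytorelor}.
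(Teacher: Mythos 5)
Your proposal is correct and follows essentially the same route as the paper's proof: cap $\tau$ and $\overline{\tau}$ to obtain the same commutative diagram of fiber sequences, identify the right vertical arrow with $\Homm{\Abold}{\Id{\Cbold}}{-}$ applied to $\Id{\Cbold}\to\Serre{\Cbold}[1-d]$, and identify the left vertical arrow (via the pullback decomposition of relative Hochschild cohomology and properness of $\Dbold$) with $\Homm{\Abold}{\Id{\Dbold}}{-}$ applied to ${\rm fib}(\eta)\to\Serre{\Dbold}[-d]$, which is an isomorphism by the defining diagram \Cref{rightrelCYdiag}. The compatibility of the cap product with post-composition that you flag as the main remaining check is likewise taken as understood in the paper's argument, so nothing in your approach diverges from it.
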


\begin{proof}

Indeed, capping against $\overline{\tau}$ gives a map $\hochco{\Cbold} \stackrel{- \cap \overline{\tau}}\arr \hoch{\Cbold}^{\vee}[1-d]$, which is evidently an isomorphism since it is given by applying $\Homm{\Abold}{\Id{\Cbold}}{-}$ to the isomorphism $\Id{\Cbold} \stackrel{\overline{\tau}}\arr \Serre{\Cbold}[1-d]$. Then capping against $\tau$ gives a commutative diagram of fiber sequences
\begin{equation}
\xymatrix{\hochco{\Dbold; {\rm fib}(\eta)} \ar[r] \ar[d] & \hochco{\Dbold \stackrel{f}\arr \Cbold} \ar[r]\ar[d]^{- \cap \tau} & \hochco{\Cbold} \ar[d]^{\simeq} \\
\hoch{\Dbold}^{\vee}[-d] \ar[r] & \hoch{\Dbold \stackrel{f}\arr \Cbold}^{\vee}[1-d] \ar[r] & \hoch{\Cbold}^{\vee}[1-d].}
\end{equation}
We claim that the left vertical arrow is an isomorphism, hence the middle vertical arrow is as well, and hence we obtain a non-commutative relative orientation as claimed. Indeed, under the identification 
$$\hoch{\Dbold}^{\vee}[-d] \simeq \Hom{\Abold}{\Id{\Dbold}}{\Serre{\Dbold}}[-d],$$
the left hand vertical arrow is obtained by applying $\Homm{\Abold}{\Id{\Dbold}}{-}$ to the arrow ${\rm fib}(\eta) \arr \Serre{\Dbold}[-d]$, which is an isomorphism by the defining diagram \Cref{rightrelCYdiag} of a relative Calabi-Yau structure. 

\end{proof}

\ssec{Examples of non-commutative co-orientations}

We give a number of examples of relative right Calabi-Yau structures, hence by \Cref{rightrelcytonccoor} non-commutative relative co-orientations. As they are essentially dual to the above examples of non-commutative relative orientations, we shall be brief. The duality however is interesting, since it gives an alternative method of calculation in some examples.

\sssec{String topology}

As before, we fix a ground commutative ring spectrum $E$ and let ${\bf H}_{*}(X)=E \wedge X_{+}$ and ${\bf H}^{*}(X)=\Homm{\Spct}{\Sigma^{\infty}X_{+}}{E}$. Thus we are using (co)homology notation for the relevant spectra and suppressing the dependence on $E$. Given a finite type space $X$, the $E$-cohomology ${\bf H}^{*}(X)=\Endd{E}{E_{X}}$ is perfect as an $E$-module, so the category of modules $\LMod{{\bf H}^{*}(X)}$ is proper over $\Abold=\LMod{E}$. Moreover, since the constant local system $E_{X} \in \Loc{X}$ on a finite type space is compact, the category $\LMod{{\bf H}^{*}(X)}$ identifies with the full subcategory of $\Loc{X}$ generated under colimits by $E_{X}$.

As above, consider an oriented $d$-manifold with boundary $i : \partial M \arr M$ with relative orientation class $[M,\partial M] \in {\bf H}_{*}(M,\partial M)[-d]$ in the sense of Lefschetz duality. We claim that the pullback functor 
\[
i^{*} : \LMod{{\bf H}^{*}(M)} \arr \LMod{{\bf H}^{*}(\partial M)}
\]
carries an induced relative right Calabi-Yau structure hence by \Cref{rightrelcytonccoor} a non-commutative relative co-orientation of dimension $d$. We sketch the construction.

There are pairings $\Loc{M} \otimes \LMod{{\bf H}^{*}(M)} \arr \LMod{E}$ given by restricting the self-duality pairing
$$\Loc{M} \otimes \Loc{M} \stackrel{\Delta^{*}}\arr \Loc{M} \stackrel{p_{*}}\arr \LMod{E}$$
along the inclusion $\LMod{{\bf H}^{*}(M)} \subset \Loc{M}$, and likewise for the pairing
$$\Loc{\partial M} \otimes \LMod{{\bf H}^{*}(\partial M)} \arr \LMod{E}.$$
Since the pairings send a pair of compact objects to a perfect $E$-module, they are dualizable, hence we obtain induced pairings in Hochschild homology
$$\hoch{\Loc{M}} \otimes \hoch{{\bf H}^{*}(M)} \arr \LMod{E} \mbox{ and  } \hoch{\Loc{\partial M}} \otimes \hoch{{\bf H}^{*}(\partial M)} \arr \LMod{E},$$
hence circle-equivariant maps
$$\hoch{\Loc{M}} \arr \hoch{{\bf H}^{*}(M)}^{\vee} \mbox{ and } \hoch{\Loc{\partial M}} \arr \hoch{{\bf H}^{*}(\partial M)}^{\vee}.$$
By naturality of the pairings, we obtain moreover a circle-equivariant map
$$\hoch{\Loc{M},\Loc{\partial M}} \arr \Homm{E}{\hoch{{\bf H}^{*}(\partial M),{\bf H}^{*}(M)}}{E}.$$
Applied to a class $\theta \in \hoch{\Loc{M},\Loc{\partial M}}^{hS^{1}}[-d]$ we obtain a class
$$\tau \in \Homm{E}{\hoch{{\bf H}^{*}(\partial M),{\bf H}^{*}(M)}}{E}^{hS^{1}}[1-d].$$
If $\theta$ is a non-commutative relative orientation, then in fact $\tau$ will be a non-commutative relative co-orientation, by an argument formally dual to the case of a non-commutative relative orientation.

Assuming that $M$ and $\partial M$ are simply-connected, we may apply Koszul duality and Goodwillie-Jones in the simply-connected case, giving equivalences
$$\hochco{{\bf H}^{*}(M)} \simeq \hochco{\Loc{M}} \mbox{ and } \hoch{{\bf H}^{*}(M)} \simeq {\bf H}^{*}(LM),$$
and similarly
$$\hochco{{\bf H}^{*}(\partial M)} \simeq \hochco{\Loc{\partial M}} \mbox{ and } \hoch{{\bf H}^{*}(\partial M)} \simeq {\bf H}^{*}(L \partial M).$$
Thus applying \Cref{relcycdel}, we obtain the following.

\begin{cor}
Let $[M,\partial M] \in {\bf H}_{*}(M,\partial M)[-d]$ be a relative orientation of dimension $d$, with both $M$ and $\partial M$ simply connected. Then there is an induced framed $E_{2}$-algebra structure on the fiber sequence
$${\rm fib} \arr \hochco{{\bf H}^{*}(M) \arr {\bf H}^{*}(\partial M)} \arr \hochco{{\bf H}^{*}(\partial M)}.$$
Moreover, the underlying fiber sequence identifies under Goodwillie-Jones with the fiber sequence
$${\bf H}^{*}(LM)^{\vee}[-d] \arr {\bf H}^{*}(LM, L \partial M)^{\vee}[-d] \arr {\bf H}^{*}(L \partial M)^{\vee}[1-d].$$ 
\end{cor}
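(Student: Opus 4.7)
The plan is to produce a non-commutative relative co-orientation on the restriction functor $i^{*}:\LMod{{\bf H}^{*}(M)}\arr \LMod{{\bf H}^{*}(\partial M)}$, apply \Cref{relcycdelprop} to obtain the framed $E_{2}$-algebra fiber sequence, and then use Koszul duality together with the Goodwillie--Jones theorem to identify the underlying sequence with the one on cohomology of free loop spaces.

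First, I would manufacture the co-orientation by dualizing the non-commutative relative orientation on $i_{!}:\Loc{\partial M}\arr \Loc{M}$ constructed in \Cref{stringtopcor}. The key input is the pairing $\Loc{M}\otimes \LMod{{\bf H}^{*}(M)}\arr \LMod{E}$ obtained by restricting $p_{*}\Delta^{*}:\Loc{M}\otimes \Loc{M}\arr \LMod{E}$ along the inclusion $\LMod{{\bf H}^{*}(M)}\subset \Loc{M}$, and analogously for $\partial M$. Since compact objects on both sides pair to perfect $E$-modules, these pairings are dualizable and compatible with the functors $i_{!}$ and $i^{*}$ (by the projection formula/adjunction), so they induce a circle-equivariant map
\[
\hoch{i_{!}:\Loc{\partial M}\arr \Loc{M}}\arr \Homm{E}{\hoch{i^{*}:{\bf H}^{*}(M)\arr {\bf H}^{*}(\partial M)}}{E}.
\]
Applying this to the relative orientation class $\theta$ of dimension $d$ yields the desired relative class $\tau$ of the correct shifted degree $1-d$ and circle-equivariance.

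Second, I would verify that $\tau$ is non-degenerate in the sense required for a non-commutative relative co-orientation, and that its restriction to the boundary $\overline{\tau}$ is an absolute non-commutative co-orientation on $\LMod{{\bf H}^{*}(\partial M)}$. This is formally dual to the argument for relative orientations from string topology: under the pairing, non-degeneracy of $\theta$ (in the form of Lefschetz duality, which gave the isomorphism in \Cref{nclefschetzseq}) translates directly to non-degeneracy of $\tau$ in the form of the fiber-sequence diagram appearing after \Cref{rightrelCYdiag}. This is the step I expect to be the main obstacle, essentially because one must carefully track how Lefschetz duality for $(M,\partial M)$ transports across the pairing to the ``capping'' map $\hochco{{\bf H}^{*}(M)\arr {\bf H}^{*}(\partial M)}\stackrel{-\cap\tau}\arr \hoch{{\bf H}^{*}(M)\arr {\bf H}^{*}(\partial M)}^{\vee}[1-d]$; concretely, it reduces to checking that the pairing between $\Loc{-}$ and $\LMod{{\bf H}^{*}(-)}$ is perfect on compacts and intertwines the two capping maps, which follows from adjointness of $i_{!}\dashv i^{*}$ and the definition of the Serre functor.

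With $\tau$ established as a non-commutative relative co-orientation of dimension $d$ whose restriction to the boundary is an absolute non-commutative co-orientation, \Cref{relcycdelprop} immediately supplies the fiber sequence of framed $E_{2}$-algebras
\[
\mathrm{fib}\arr \hochco{{\bf H}^{*}(M)\arr {\bf H}^{*}(\partial M)}\arr \hochco{{\bf H}^{*}(\partial M)},
\]
with the fiber non-unital. Finally, to identify the underlying sequence, I would invoke the simply-connected hypothesis: Koszul duality gives equivalences $\hochco{{\bf H}^{*}(M)}\simeq \hochco{\Loc{M}}$ and likewise for $\partial M$ (and for the functor, by naturality and \Cref{hochfibprod}), while the Goodwillie--Jones theorem combined with the identification $\hoch{{\bf H}^{*}(M)}^{\vee}\simeq \Homm{E}{\hoch{\Loc{M}}}{E}^{\vee}$ coming from the perfectness of ${\bf H}^{*}(M)$ yields the stated identification with ${\bf H}^{*}(LM,L\partial M)^{\vee}[-d]$ etc. Putting these identifications together with the conclusion of \Cref{relcycdelprop} gives the corollary.
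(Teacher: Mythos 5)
Your proposal matches the paper's own argument essentially step for step: both construct the restricted self-duality pairing $\Loc{M}\otimes \LMod{{\bf H}^{*}(M)}\arr \LMod{E}$ (and its analogue for $\partial M$), use its dualizability and naturality to transport the string-topology relative orientation $\theta$ to a circle-invariant class $\tau$ on the dual of the relative Hochschild homology of $i^{*}$, invoke the formally dual non-degeneracy argument to see that $\tau$ is a relative co-orientation restricting to an absolute one on the boundary, apply the co-orientation form of the relative cyclic Deligne theorem, and identify the underlying fiber sequence via Koszul duality and Goodwillie--Jones in the simply-connected case. The step you flag as the main obstacle (transporting non-degeneracy across the pairing) is exactly the point the paper also handles only by appeal to formal duality, so your sketch is faithful to the paper's level of detail.
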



\sssec{Anti-canonical divisors}

Fix a proper Gorenstein integral scheme $X$ of dimension $d$ over a field $k$, a non-zero anti-canonical section $\sigma \in K_{X}^{-1}$, and the zero scheme $i: Z=Z(\sigma) \hookrightarrow X$. Note that both $\QCoh{X}$ and $\QCoh{Z}$ are proper. We claim that the pullback functor $i^{*} : \QCoh{X} \arr \QCoh{Z}$ has an induced relative non-commutative co-orientation of dimension $d$. We refer to
\cite[Appendix B.5]{preygel} for the relation between Hochschild invariants of ind-coherent sheaves and of quasi-coherent sheaves.

There are natural isomorphisms
$$\hoch{\IndCoh{X}} \simeq \hoch{\QCoh{X}}^{\vee} \mbox{ and } \hoch{\IndCoh{Z}} \simeq \hoch{\QCoh{Z}}^{\vee}$$
and in fact a commuting diagram of fiber sequences
\begin{equation}\label{anticankoszuldual}
\xymatrix{\hoch{\IndCoh{X},\IndCoh{Z}}[-1] \ar[r] \ar[d] & \hoch{\IndCoh{Z}} \ar[r] \ar[d] & \hoch{\IndCoh{X}}  \ar[d]\\
\hoch{\QCoh{Z},\QCoh{X}}^{\vee} \ar[r]  & \hoch{\QCoh{Z}}^{\vee} \ar[r]  & \hoch{\QCoh{X}}^{\vee},}
\end{equation}
where the arrow $\hoch{\IndCoh{Z}} \arr \hoch{\IndCoh{X}}$ is induced by pushforward of ind-coherent sheaves and the arrow $\hoch{\QCoh{Z}}^{\vee} \arr \hoch{\QCoh{X}}^{\vee}$ is dual to the arrow $\hoch{\QCoh{X}} \arr \hoch{\QCoh{Z}}$ induced by pullback. 

There are natural equivalences of Hochschild cohomologies
$$\hochco{\IndCoh{X}} \simeq \hochco{\QCoh{X}} \mbox{ and } \hochco{\IndCoh{Z}} \simeq \hochco{\QCoh{Z}}$$ as well as of the actions on the corresponding functors, hence there are equivalences of relative Hochschild cohomologies
$$\hochco{\IndCoh{Z} \arr \IndCoh{X}} \simeq \hochco{\QCoh{X} \arr \IndCoh{Z}}.$$
Letting
$$\theta \in \hoch{\IndCoh{X},\IndCoh{Z}}[-d]$$
be the relative non-commutative orientation of \Cref{ss:anticanon indcoh}, and applying the left hand vertical isomorphism in \Cref{anticankoszuldual}, we obtain a circle-invariant class
$$\tau \in \hoch{\QCoh{Z},\QCoh{X}}^{\vee}[1-d].$$
Given the natural identifications
$$\hochco{\IndCoh{X}} \simeq \hochco{\QCoh{X}} \mbox{ and } \hochco{\IndCoh{Z}} \simeq \hochco{\QCoh{Z}},$$ the fact that capping on $\tau$ is an isomorphism is equivalent to the fact that capping on $\theta$ is, and similarly for $\overline{\theta} \in \hoch{\IndCoh{Z}}[1-d]$ and $\overline{\tau} \in \hoch{\QCoh{Z}}^{\vee}[1-d]$. 

\medskip

Applying \Cref{relcycdel}, we obtain the following.

\begin{cor}
Let $X$ be a proper Gorenstein integral scheme  of dimension $d$ over a field $k$, with a non-zero anti-canonical section $\sigma \in K_{X}^{-1}$, and the zero scheme $i: Z=Z(\sigma) \hookrightarrow X$. Then the pullback functor $i^{*} : \QCoh{X} \arr \QCoh{Z}$ has a natural relative non-commutative co-orientation of dimension $d$, hence there is a fiber sequence of framed $E_{2}$-algebra structures
\[
{\rm fib} \arr \hochco{\QCoh{X} \arr \QCoh{Z}} \arr \hochco{\QCoh{Z}}
\]
with ${\rm fib}$ non-unital. The underlying fiber sequence of circle modules identifies with
\[
\hoch{\QCoh{Z},\QCoh{X}}^{\vee}[1-d] \arr \hoch{\QCoh{Z}}^{\vee}[1-d] \arr \hoch{\QCoh{X}}^{\vee}[1-d].
\]
\end{cor}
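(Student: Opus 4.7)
The plan is to verify the hypotheses of \Cref{relcycdelprop} for the pullback functor $i^{*}: \QCoh{X} \arr \QCoh{Z}$, applied to the relative non-commutative co-orientation obtained by dualizing the orientation on $i_{*}: \IndCoh{Z} \arr \IndCoh{X}$ constructed in the preceding subsection on anti-canonical divisors in the orientation setting.

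Concretely, I would start from the relative non-commutative orientation $\theta \in \hoch{\IndCoh{X},\IndCoh{Z}}[-d]$ of \Cref{ss:anticanon indcoh} and apply the left vertical isomorphism in \Cref{anticankoszuldual} to obtain a circle-invariant class
\[
\tau \in \hoch{\QCoh{Z},\QCoh{X}}^{\vee}[1-d].
\]
Using the natural equivalences $\hochco{\IndCoh{X}} \simeq \hochco{\QCoh{X}}$ and $\hochco{\IndCoh{Z}} \simeq \hochco{\QCoh{Z}}$, compatibly with the respective actions on the functors, we get a natural identification $\hochco{\IndCoh{Z} \arr \IndCoh{X}} \simeq \hochco{\QCoh{X} \arr \QCoh{Z}}$, under which capping on $\tau$ corresponds to capping on $\theta$. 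Hence the cap-product
\[
\hochco{\QCoh{X} \arr \QCoh{Z}} \stackrel{-\cap\tau}\arr \hoch{\QCoh{Z},\QCoh{X}}^{\vee}[1-d]
\]
is an isomorphism, so that $\tau$ is indeed a relative non-commutative co-orientation. The same comparison applied on the boundary shows that the restriction $\overline{\tau} \in \hoch{\QCoh{Z}}^{\vee}[1-d]$ is an absolute non-commutative co-orientation on $\QCoh{Z}$.

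With $\tau$ in hand and non-degenerate on the boundary, \Cref{relcycdelprop} immediately yields the fiber sequence of framed $E_{2}$-algebra structures
\[
{\rm fib} \arr \hochco{\QCoh{X} \arr \QCoh{Z}} \arr \hochco{\QCoh{Z}}
\]
with ${\rm fib}$ non-unital, together with the identification of the underlying fiber sequence of circle modules with
\[
\hoch{\QCoh{Z},\QCoh{X}}^{\vee}[1-d] \arr \hoch{\QCoh{Z}}^{\vee}[1-d] \arr \hoch{\QCoh{X}}^{\vee}[1-d],
\]
arising from the cap-product equivalences produced by $\tau$ and $\overline{\tau}$, exactly as in the second part of \Cref{relcycdelprop}.

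The main obstacle, already addressed in the discussion preceding the statement by appealing to \cite[Appendix B.5]{preygel}, is the commutativity of diagram \Cref{anticankoszuldual}: one must verify that the pushforward $\hoch{\IndCoh{Z}} \arr \hoch{\IndCoh{X}}$ is circle-equivariantly dual to the pullback $\hoch{\QCoh{X}} \arr \hoch{\QCoh{Z}}$, and that this duality is compatible with the passage to relative pieces. Once this compatibility is established, the corollary follows immediately from \Cref{relcycdelprop}.
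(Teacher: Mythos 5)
Your proposal is correct and follows essentially the same route as the paper: transfer the relative orientation $\theta$ on $i_{*}:\IndCoh{Z}\arr\IndCoh{X}$ through the duality diagram \Cref{anticankoszuldual} (justified via \cite[Appendix B.5]{preygel}) to a class $\tau$, use the identifications of (relative) Hochschild cohomologies to see that capping on $\tau$ is an isomorphism because capping on $\theta$ is, and then apply the co-orientation form of the relative cyclic Deligne theorem. Your citation of \Cref{relcycdelprop} is in fact the more precise reference for this step, and your flagged obstacle (circle-equivariant compatibility of the IndCoh/QCoh duality with the relative terms) is exactly the point the paper also delegates to \cite{preygel}.
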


\bibliographystyle{amsalpha}
\bibliography{cyc-del-bib} 

\end{document}